\theoremstyle{thmstyleone}%
\newtheorem{theorem}{Theorem}%
\newtheorem{proposition}[theorem]{Proposition}%
\theoremstyle{thmstyletwo}%
\newtheorem{remark}{Remark}%
\theoremstyle{thmstylethree}%
\newtheorem{definition}{Definition}%
\tikzset{>=stealth',inner sep=0pt,outer sep=2pt}
\pgfplotsset{compat=1.18}
\def\hquad{\hskip0.5em\relax}
\def\quad{\hskip1em\relax}
\def\qquad{\hskip2em\relax}
\def\qqquad{\hskip3em\relax}
\newtheorem{corollary}[theorem]{Corollary}
\newtheorem{lemma}[theorem]{Lemma}
\newtheorem*{constraints*}{Constraints}
\newtheorem{model}{Model}
\newtheorem{modelmy}{Model}
\newenvironment{modelCustom}[2]
{
    \begingroup
    \setcounter{modelmy}{#1-1}%
    \begin{modelmy}
}
{
    \end{modelmy}
    \endgroup
}
\newcounter{maincounter}
\newcounter{tech}
\newcommand{\pushright}[1]{\ifmeasuring@#1\else\omit\hfill$\displaystyle#1$\fi\ignorespaces}
\newcommand{\pushleft}[1]{\ifmeasuring@#1\else\omit$\displaystyle#1$\hfill\fi\ignorespaces}
\NewDocumentEnvironment{FeasRegion}{sO{center}}
    {\IfBooleanTF{#1}
        {\IfEqCase{#2}{
            {center}{\begin{equationarray*}{c@{\hskip2em}l}}
            {align}{\begin{equationarray*}{r@{\hskip0.5em}c@{\hskip0.5em}l@{\hskip2em}l}}
        }}
        {\IfEqCase{#2}{
            {center}{\begin{equationarray}{c@{\hskip2em}l}}
            {align}{\begin{equationarray}{r@{\hskip0.5em}c@{\hskip0.5em}l@{\hskip2em}l}}
    }}}
    {\IfBooleanTF{#1}
        {\end{equationarray*}\ignorespacesafterend}
        {\end{equationarray}\ignorespacesafterend}
    }
\NewDocumentEnvironment{LPArray}{sO{center}mm}
    {\IfBooleanTF{#1}
        {\IfEqCase{#2}{
            {center}{\begin{equationarray*}{rc@{\hskip2em}l}
                \text{#3} & \multicolumn{2}{l}{#4} \\
         	\text{s.t.}}
            {align}{\begin{equationarray*}{rr@{\hskip0.5em}c@{\hskip0.5em}l@{\hskip2em}l}
                \text{#3} & \multicolumn{4}{l}{#4} \\
         	\text{s.t.}}
        }}
        {\IfEqCase{#2}{
            {center}{\begin{equationarray}{rc@{\hskip2em}l}
                \text{#3} & \multicolumn{2}{l}{#4} \\
         	\text{s.t.}}
            {align}{\begin{equationarray}{rr@{\hskip0.5em}c@{\hskip0.5em}l@{\hskip2em}l}
                \text{#3} & \multicolumn{4}{l}{#4} \\
         	\text{s.t.}}
    }}}
    {\IfBooleanTF{#1}
        {\end{equationarray*}}
        {\end{equationarray}}
    \ignorespacesafterend\unskip}
\renewcommand{\P}{\mathbb P}
\newcommand{\Reals}{\mathbb R}
\newcommand{\R}{\mathbb R}
\newcommand{\TV}{\hyperlink{def:TV}{\textnormal{TV}}}
\newcommand{\DV}{\hyperlink{def:DV}{\textnormal{DV}}}
\newcommand{\p}{\hyperref[tab:labeling]{p}}
\newcommand{\tildep}{\hyperref[tab:labeling]{\tilde p}}
\newcommand{\varColor}[1]{{\color{blue} #1 }}
\newcommand{\brObj}{{\phi}}
\newcommand{\brProbit}{{\Phi_{\beta, \beta_0}}}
\newcommand{\Z}{\mathbb{Z}}
\newcommand{\XFeasPoly}{\mathscr{S}}
\newcommand{\XYZFeasPoly}{\mathscr{U}}
\newcommand{\DomOne}{\mathscr{D}}
\newcommand{\DomTwo}{\mathscr{B}}
\newcommand{\Approx}{\mathscr{A}}
\newcommand{\Relax}{\mathscr{R}}
\newcommand{\XContPoly}{{\XFeasPoly_{\textnormal{C}}}}
\newcommand{\districts}{{M}}
\newcommand{\numDistricts}{{m}}
\newcommand{\edges}{{E}}
\newcommand{\numEdges}{{|E|}}
\newcommand{\parcels}{{N}}
\newcommand{\numParcels}{{n}}
\newcommand{\numBreakpoints}{\ell}
\newcommand{\breakpointsIter}{[\![\ell]\!]}
\newcommand{\years}{S}
\newcommand{\yearsIter}{[\![s]\!]}
\newcommand{\numYears}{s}
\newcommand{\bigM}{\mathscr{M}}
\newcommand{\vecM}{\boldsymbol{\mathscr{M}}}
\newcommand{\bigN}{\mathscr{N}}
\newcommand{\vecN}{\boldsymbol{\mathscr{N}}}
\newcommand{\LS}{\hquad<\hquad}
\newcommand{\LEQ}{\hquad\leq\hquad}
\newcommand{\GEQ}{\hquad\geq\hquad}
\newcommand{\EQ}{\hquad=\hquad}
\newcommand{\DEF}{\hquad\coloneq\hquad}
\newcommand{\SUBEQ}{\hquad\subseteq\hquad}
\newcommand{\MOD}{\text{mod\ }}
\newcommand{\x}{{\varColor{ \mathbf{x}}}}
\newcommand{\xx}{{\varColor{x}}}
\newcommand{\y}{{\varColor{\mathbf{y}}}}
\newcommand{\yy}{{\varColor{y}}}
\newcommand{\z}{{\varColor{\mathbf{z}}}}
\newcommand{\zz}{{\varColor{z}}}
\newcommand{\ff}{{\varColor{f}}}
\newcommand{\rr}{{\varColor{r}}}
\renewcommand{\ss}{{\varColor{s}}}
\newcommand{\deltaVar}{\varColor{\delta}}
\newcommand{\deltaVec}{\varColor{\boldsymbol\delta}}
\newcommand{\etaVar}{\varColor{\eta}}
\newcommand{\xiVar}{\varColor{\xi}}
\newcommand{\lambdaVar}{\varColor{\lambda}}
\newcommand{\lambdaVec}{\varColor{\boldsymbol\lambda}}
\newcommand{\VAP}{\hyperlink{def:VAP}{\textnormal{VAP}}}
\newcommand{\BVAP}{\hyperlink{def:BVAP}{\textnormal{BVAP}}}
\newcommand{\HVAP}{\hyperlink{def:HVAP}{\textnormal{HVAP}}}
\newcommand{\CPVI}{\mathrm{CPVI}}
\newcommand{\RH}[1]{}
\newcommand{\RHans}[1]{}
\newcommand{\RHilans}[1]{}
\newcommand{\RHil}[1]{}
\newcommand{\JF}[1]{}
\newcommand{\JFil}[1]{}
\newcommand{\JFilres}[1]{}
\newcommand{\JFilans}[1]{}
\newcommand{\lowhat}[1]{%
  \widehat{\smash{#1}\vphantom{x}}\vphantom{#1}%
}
\newcommand{\jfixed}{\hat\jmath}
\newcommand{\conv}{\mathrm{conv}}
\newcommand{\cone}{\mathrm{cone}}
\newcommand{\alphabm}{\boldsymbol \alpha}
\newcommand{\betabm}{\boldsymbol \beta}
\newcommand{\deltabm}{\boldsymbol \delta}
\newcommand{\omegabm}{\boldsymbol \omega}
\DeclareMathOperator*{\proj}{\mathrm{proj}\!}
\newcommand{\pder}[1]{\frac{\partial }{\partial #1}}
\newcommand{\X}{\mathbf{X}}
\newcommand{\doubleblind}[1]{#1}
\newcommand{\graycell}{\color{gray!40}}
\newcommand{\MID}{\hquad\middle|\ }
\DeclareMathOperator{\hyp}{hyp}
\DeclareMathOperator{\gra}{gra}
\DeclareMathOperator{\step}{step}
\DeclareMathOperator{\pwlg}{pwlg}
\DeclareMathOperator{\pwlh}{pwlh}
\DeclareMathOperator{\BNFull}{BNFull}
\DeclareMathOperator{\BNStep}{BNStep}
\DeclareMathOperator{\BNRot}{BNRot}
\DeclareMathOperator{\LogE}{logE}
\def\cdf(#1)(#2)(#3){0.5*(1+(erf((#1-#2)/(#3*sqrt(2)))))}
\tikzset{
    declare function={
        normcdf(\x,\m,\s)=1/(1 + exp(-0.07056*((\x-\m)/\s)^3 - 1.5976*(\x-\m)/\s));
    }
}
\newcommand{\steps}{\lowhat{\step}_{\{b_{tk}\}}^{\numYears}\!\left(\psi\right)}
\newcommand{\cm}{\ding{51}}
\newcommand{\xm}{\ding{55}}
\newcommand{\referee}[1]{}
\newcommand{\suggestion}[1]{}
\newcommand{\deleted}[1]{}
\begin{document}

\title[Optimizing Representation in Redistricting]{Optimizing Representation in Redistricting: Dual Bounds for Partitioning Problems with Non-Convex Objectives}

\author*[1]{\fnm{Jamie} \sur{Fravel}}\email{jamiefravel@me.com}
\author[1]{\fnm{Robert} \sur{Hildebrand}}\email{rhil@vt.edu}
\author[2]{\fnm{Nicholas} \sur{Goedert}}\email{ngoedert@vt.edu}
\author[3]{\fnm{Laurel} \sur{Travis}}\email{ltravis@vt.edu}
\author[4]{\fnm{Matthew} \sur{Pierson}}\email{mxzf@vt.edu}

\affil[1]{\orgdiv{Grado Department of Industrial and Systems Engineering}, \orgname{Virginia Tech}, \orgaddress{\city{Blacksburg}, \state{Virginia}, \country{USA}}}

\affil[2]{\orgdiv{Department of Political Science}, \orgname{Virginia Tech}, \orgaddress{\city{Blacksburg}, \state{Virginia}, \country{USA}}}

\affil[3]{\orgdiv{Business Information Technology}, \orgname{Virginia Tech}, \orgaddress{\city{Blacksburg}, \state{Virginia}, \country{USA}}}

\affil[4]{\orgdiv{Center for Geospatial Information Technology}, \orgname{Virginia Tech}, \orgaddress{\city{Blacksburg}, \state{Virginia}, \country{USA}}}

\abstract{We investigate optimization models for the purpose of computational redistricting. Our focus is on nonconvex objectives for estimating expected Black Representatives and Political Representation. The objectives are a composition of a ratio of variables and a normal distribution's cumulative distribution function (or ``probit curve"). We extend the work of Validi et al.~\cite{validi2022imposing}, which presented a robust implementation of contiguity constraints. By developing mixed integer linear programming models that closely approximate the parent nonlinear model, our approaches yield tight bounds on these optimization problems. We exhibit the effectiveness of these approaches on county-level data.}

\keywords{redistricting, mixed-integer programming, piecewise-linear approximation, nonconvex optimization, political representation}

\maketitle

\section{Introduction}

We study mixed integer optimization problems of the form
    \begin{LPArray}{Maximize}{\sum_{j=1}^{\numDistricts} \brObj\left(\beta_0 + \beta_j\sum_{t=1}^{\numYears} \frac{\yy_{jt}    \label{eq:general}}{\zz_{jt}}\right)}
        & (\x,\y,\z)  \in \XFeasPoly, \notag
    \end{LPArray}
where $\XFeasPoly$ is a mixed integer linear set, $m$ and $s$ are integers, and $\brObj$ is an increasing function.  We use boldface to denote vectors and blue color to denote variables in our optimization models.

We focus on applications of state redistricting in the United States, with the aim of partitioning land parcels into a set of contiguous districts that are approximately equal in population.  Variations in state policies for valid district plans exist and\textemdash although it is formally required by only 23 states\textemdash contiguity of districts is a norm typically practiced in all states~\cite{altman_mcdonald_redistricting, duchin_gerrymandering_2018}.

We focus on optimizing districts for both minority and political representation via \eqref{eq:general} where $\brObj$ is the cumulative distribution function of the standard normal distribution and $\beta_0\in\Reals$ and $\betabm \in \Reals^m$ are parameters drawn from historical data. This optimization problem maximizes the expected value of some outcome under the probit model; that is, for random variables $Y \in \{0,1\}$ and $\X \in \Reals^n$, we assume $\P(Y=1\, |\, \X) = \brObj(\beta_0 + \betabm^\top \X)$.  Specifically, $\X$ is the list of features used to estimate the outcome of $Y$, which can be inferred through curve fitting from some distributions of historical data.

The features of \eqref{eq:general}, as applied to political districting, appear as ratios (or the sums of ratios) of the number of voters of one type divided by the number of voters of a larger type \textemdash typically the total voter base.  The key obstacles presented by \eqref{eq:general} are the non-linearity of $\brObj$ itself, the rational terms $\yy_{jt}/\zz_{jt}$, and the complexity and dimension of the set $\XFeasPoly$.  Before presenting our main contributions, we provide a background on redistricting to emphasize its importance and review recent work on algorithmic redistricting.

\subsection{Background on Redistricting}
In the United States, members of both the U.S. Congress and state legislatures are primarily elected through single-member geographic districts. The design of these districts (and the institutions and norms governing their creation) are vital to legislative composition and various voter groups' political power. Districts can be drawn to suppress or enhance interests of political parties, racial or ethnic groups, and economic classes or to influence policies that mitigate or exacerbate inequality.

In most states, the redistricting of state and federal legislative chambers is performed by the state legislature following the national census every decade. Partisan gerrymandering, or the deliberate drawing of district lines to benefit one party, has been a controversial practice in the U.S. and has received increased attention in recent years due to highly effective Republican gerrymanders. These cases have resulted in competing and sometimes conflicting metrics for evaluating partisan bias in maps, with no definitive resolution on a single legal or empirical standard~\cite{best-considering-2017, gelman, mcdonald_unfair_2015, McGann2016, nagle_measures_2015, stephanopoulos_measure_2018}.

As an example case we discuss the Virginia House of Delegates in the southeastern third of the state. This region and chamber were the subject of recent federal litigation in \textit{Bethune-Hill v.\ Virginia State Board of Elections (2017)} under which the initial map was struck down as an unconstitutional, racial gerrymander. The Virginia state legislature consists of two groups: the Senate and the House of Delegates. The latter consists of 100 single-member districts representing a population of around 80,000 people. The map of the House of Delegates was redrawn after the 2010 Census by a Republican-controlled legislature. This map created twelve districts in the southeastern region of the state with a Black Voting-Age-Population (BVAP) of over 55 percent their total Voting-Age-Population (VAP). Following a federal suit from residents of these districts, the U.S. Supreme Court ruled that the map violated the Equal Protection Clause with race being shown to be the predominant factor motivating the map. These districts were an example of legislators having packed voters of one type into a few districts, which diluted their influence throughout the state. Eleven districts were overturned, but the Virginia government, with partisan control divided, was unable to agree on a revision. The District Court, acting on the recommendation of special master Bernard Grofman, implemented its own~\cite{grofman2018special}.

\subsection{Integer Programming used for Algorithmic Redistricting}
The early stages of algorithmic redistricting research were marked by works such as ~\cite{Hess1965} and ~\cite{Garfinkel1970}. For a comprehensive understanding of the history of the field, the reader may refer to ~\cite{validi2021political, validi2022imposing}. Here, we will summarize some of the latest developments in the field.

One promising optimization approach that has gained attention in recent years is the ``Fairmandering" method~\cite{wes2021}. It employs a column generation algorithm to optimize various objective functions, and is a compelling approach for optimizing a wide range of metrics with the potential for optimality over a large pool of maps. However, this method does not guarantee optimality as it does not provide dual bounds for the entire problem.

Another successful approach is the use of ensemble methods, such as Markov chain Monte Carlo (MCMC) (e.g.,~\cite{mcmc1, mcmc3, mcmc2}) that samples maps from a distribution using random steps to walk through the solution space and generate a large ensemble of maps, allowing for statistical analysis. Duchin et\,al.~\cite{duchin_gerrymandering_2018} provide the open-source python package \texttt{gerrychain} for such computations and use a \emph{recombination} approach for stepping through the solution space. This has been modified for optimization purposes in~\cite{validi2021political, bvap_southern}. A particularly noteworthy example is the ``short bursts" technique developed in~\cite{short_bursts}, which selects the best improvements from a collection of steps to optimize the solution. This paper stems from our curiosity into the performance of the ensemble methods which we have applied to Racial and Partisan Representation objectives in \cite{bvap_southern,political}. These methods do not generate dual bounds, so it can be difficult to verify the optimality of any given solution.

Recent studies, such as Becker et al.~\cite{Becker2021}, have generated random maps with a focus on ``effectiveness" of electing a representative or competitiveness while systematically including or excluding these maps within larger ensembles \textemdash particularly to meet certain VRA criteria related to Latino Representation in Texas. These works demonstrate a careful consideration of detailed, state-specific information. Recent work employing mixed integer programming approaches~\cite{Belotti2023} demonstrates that it is indeed possible to incorporate state-specific legal requirements such as population deviation limits, compactness measures, and Voting Rights Act compliance criteria into tractable optimization models. Our current focus is on developing exact optimization methodology; while we acknowledge the importance of incorporating such detailed criteria, this paper does not delve into these state-specific complexities. Instead, our focus is on developing and refining the underlying algorithmic approaches so that greater complexities can be addressed in future. 

A major advancement in the field of exact optimization was made by Validi et al. ~\cite{validi2022imposing, validi2021political} who applied lazy constraint techniques to impose contiguity within an integer programming framework. They also developed a preprocessing technique called ``variable fixing'' which uses Lagrangian Relaxation to eliminate a significant number of variables by ruling out obviously suboptimal or infeasible assignments. This technique is specific to the compactness objective they studied.

Due to the large scale of the underlying integer program, we found it important to use a powerful linear integer programming solver that can handle problems at a large scale. Additionally, being defined by an integral, our probit objective function does not have a closed form and is poorly suited to optimization without some kind of discretization or linearization methods. For these reasons, we did not use any nonlinear optimization software, such as BARON, which is more focused on solving complicated MINLPs at not quite as large scale.

\subsection{Contributions}
This paper addresses measures of representation. Because there are several heuristic techniques available \cite{wes2021} for optimizing arbitrary measures, our main purpose is to prove strong dual bounds and prove optimality (or near optimality) of certain solutions. This is fundamental to understanding the limitations of certain objectives and is rarely addressed when generating suitable solutions.  

Our main contribution is developing novel models for mixed integer nonlinear programs when rational expressions are composed with increasing, nonlinear functions. We study this in the context of high-dimensional integer programming, wherein the underlying constraints are already difficult to address. In doing so, we expand the literature on optimization for redistricting to non-convex objective functions that more realistically reflect the interests of political scientists. Prior work has focused on formulations preferred by mathematicians due to their convenient structure for optimization.  For instance, \cite{validi2022imposing} addresses a measure of compactness that is closely related to a facility location problem more often studied in the optimization community.  

We build on the excellent work in~\cite{validi2021political, validi2022imposing} that provides substantial improvements in the handling of contiguity constraints; we use their models and lazy constraint techniques as a basis for handling the domain of our problem. We develop several techniques for linearizing nonconvex objective functions.  We compare a naive model using \emph{piecewise-linear approximation} to novel mixed integer linear relaxations including optimized \emph{stepwise function relaxations}.  We then present two \emph{compact formulations}: one based on the Ben-Tal \& Nemirovski approximation of the second order cone~\cite{Ben-Tal-Nemirovski} and another based on logarithmic embeddings of 2-dimensional piecewise-linear functions. We include some improvements in these approaches which are valid for our context.

We further compute improved bounds on our domain for the variables (both in preprocessing and through gradient cuts) and show that these bounds are effective, 40--75\% across our test instances (see Figure~\ref{fig:LogE_with_bounds}), particularly when combined with the logarithmic piecewise-linear approximation.

Lastly, we show how the multi-ratio objectives can be modeled using a stepwise formulation before presenting extensive computational results to compare our formulations and techniques.

\paragraph{On Legality and Compactness of Maps.}
The district maps generated in this study are not intended to satisfy all legal or normative criteria for valid redistricting plans. This work focuses on methodological development, specifically, the application of mixed-integer nonlinear programming techniques to the redistricting problem, rather than on producing implementable or legally compliant district maps.

There are several reasons why the resulting maps may not meet legal standards. First, we do not enforce tightly constrained population deviations across districts. Relaxing this constraint makes the underlying optimization problem more tractable and allows for clearer insights into the behavior of different nonlinear formulations. Second, legal requirements for redistricting vary across jurisdictions and can include complex criteria such as contiguity, multiple notions of compactness, preservation of communities of interest, and compliance with the Voting Rights Act. For example, district shapes that appear highly irregular or non-compact may raise constitutional concerns under cases such as \textit{Shaw v. Reno} (1993), where race-based districting without sufficient justification was found to violate the Equal Protection Clause. Modeling these legal considerations in full detail poses significant challenges and lies beyond the scope of the present study.

As such, the maps presented here should be viewed as theoretical outputs of mathematical optimization models, not as proposals for legally viable redistricting plans.

\subsection{Outline}

In Section~\ref{sec:core-models}, we precisely define the optimization problems that we are aiming to solve. We start with core models and describe our approach to enforcing contiguity before explaining the nonconvex objectives of interest. 
In Section~\ref{sec:MILP-relaxations}, we present mixed-integer linear programming (MILP) relaxations and approximations based on graph and hypograph formulations.
In Section~\ref{sec:single-ratio}, we analyze some common techniques for addressing nonlinear representation objectives with a single ratio. 
In Section~\ref{sec:LogSize-single-ratio}, we expand our scope to include more abstract methods which encode similar approximations using only a logarithmic number of binary variables.
In Section~\ref{sec:extra-bounds}, we discuss methods for improving bounds as a pre-processing step and in the form of cuts.
In Section~\ref{sec:multi-ratios}, we expand the ideas from the previous sections to apply to Political Representation objectives with multiple ratios.
We provide some tables of computations in Section~\ref{sec:Computations-and-Conclusions} before delivering our conclusions.
In Appendix~\ref{sec:computations}, we compile the best bounds found for each objective on the instance classes and provide more details on our computational experiments.

\section {Redistricting Optimization Models}\label{sec:core-models}
We present the fundamental parameters of our problem in Table \ref{tab:labeling} before constructing the basis of our Mixed-Integer Linear Programming (MILP) Models. Our data takes the form of graphs, created from national census data, which are partitioned into districts. The graph of each state implies the following sets and is embedded with the following parameters:

    \begin{table}[ht]
    \begin{longtable*}{lll}
    Set & Size & Description \\
    \hline
    $\parcels$ & $\numParcels$ & Set of nodes in the graph (i.e., parcels of land)\\
    $\districts$ & $\numDistricts$ & Set of districts into which the nodes are partitioned; define $M = \{1,\ldots,m\}$ \\
    $\edges$ & $\numEdges$ & Set of edges in the graph (i.e., borders between parcels of land)
    \\
    \\
    Parameter & Space & Description \\
    \hline
    $\p_i$ & $\Z_+$ & Population of node $i$ for $i\in\parcels$\\
    $\tildep$ & $\Reals$ & Average population of districts; i.e. $\tfrac{\sum_{i\in\parcels}p_i}{m}$\\
    $\tau$ & [0,1] & Allowable fraction of deviation from average population.
    \end{longtable*}
    \caption{Sets and parameters for the labeling model.\vspace{-1em}
    }
    \label{tab:labeling}
    \end{table}

For each $i\in \parcels$ and $j \in \districts$, define the decision variable $\hypertarget{def:x}{\xx_{ij}}$ to take the value $1$ if node $i$ is assigned to district $j$ and $0$ otherwise. The following constraints form the basis of a labeling style redistricting model.
    \begin{constraints*}[Labeling Constraints]
    \begin{subequations}
    \begin{FeasRegion}
    \sum_{j\in \districts}\xx_{ij}  \EQ  1                                     & \forall\ i\in\parcels \label{labeling:basic:assignment}\\
    (1-\tau)\tildep  \LEQ  \sum_{i\in \parcels}\p_i\xx_{ij}  \LEQ  (1+\tau)\tildep    & \forall\ j \in \districts \label{labeling:basic:pop}
    \end{FeasRegion}
    \end{subequations}
    \end{constraints*}
Constraint \eqref{labeling:basic:assignment} assigns each node to a unique district while \eqref{labeling:basic:pop} ensures that the population of each district is within $\tau$ of the mean $\tildep$. 
Given in \cite{validi2022imposing}, the Hess model is also popular. It resembles a clustering problem which has more variables but is useful when considering spatial objectives like compactness or moment of inertia.

\subsection{ Contiguity}\label{sec:Contiguity}
Given a set of edges $\edges \subseteq \big\{\{u,v\}:u,v\in \parcels,\ u\neq v\big\}$, we say that a partition $\parcels_1 \sqcup \dots \sqcup \parcels_{\numDistricts}$ of the nodes $\parcels$ is contiguous if the induced subgraph $G_j = G[N_j]$ is connected for each district $j \in \districts$. For our data, $\edges$ is constructed by the physical adjacency of land parcels represented in $\parcels$; thus the resulting graph $G = (\parcels, \edges)$ is planar. 

Contiguity is often considered difficult to include in Integer Programming, but recent papers by Validi et al. \cite{validi2021political, validi2022imposing} demonstrate efficient methods for enforcing contiguity in models such as our own. We chose to adapt their Python code and Gurobi model (given in \cite[\href{https://github.com/hamidrezavalidi/Political-Districting-to-Minimize-Cut-Edges}{GitHub}]{validi2021political}) to our objectives and approximations. We use the notation $\XContPoly$ to refer to the contiguity-enforced feasible region of our models.

    \begin{constraints*}[Contiguous Labeling Constraints]
    \begin{equation*}
    \hypertarget{def:Plcut}\XContPoly = \left\lbrace \x\in\{0,1\}^{n\times m}\ :\ \eqref{labeling:basic:assignment},\ \eqref{labeling:basic:pop},\  \xx_{aj}+\xx_{bj} \leq 1+\sum_{c\,\in\,C}\xx_{cj} \qquad \forall\ \text{$a,\!b$-separators $C$}, \ \forall \  j  \right\rbrace.
    \end{equation*}
    \end{constraints*}
    An $a,b$-separator is a set of nodes $C$ such that any path between nodes $a$ and $b$ must pass through at least one element of the separator.

The additional constraint ensures that, if both nodes $a$ and $b$ are assigned to district $j$, then at least one element of each $a,b$-separator must also be assigned to the same district. Validi et al.\ propose strengthened versions of these inequalities using district population upper bounds; see~\cite{validi2021political, validi2022imposing} for details. There are exponentially many $a,b$-separators but the corresponding inequalities can be lazily added via an efficient separation algorithm.

\subsection{The Expected Black Representation Objective}
The metric that inspired this work is that of Black Representation since it is the crucial value at issue in the Bethune-Hill racial gerrymandering literature.  We operationalize Black Representation as the sum of the likelihood of each district electing a Black Representative which we take to be a function of the ratio of Black Voting-Age-Population (BVAP) to the total Voting-Age-Population (VAP) in the district.  This likelihood is modeled as a probit function with parameter values estimated from the relationship between BVAP/VAP and elected representatives in national legislative elections since 1992.  The function is the affine transformation of the cumulative standard-normal distribution $\Phi$ evaluated at $r = \frac{\BVAP}{\VAP}$, that is, the ratio of Black voters to the total population of voters in a district. 
 In particular, 
\begin{align*}
     \Phi_{\beta, \beta_0}(r) \EQ  {\frac{1}{\sqrt{2\pi}} \int_{-\infty}^{\beta \cdot r - \beta_0} e^{-\frac{1}{2}t^2} \text dt}.
\end{align*}
Being drawn from historical data, the parameters $\beta$ and $\beta_0$ vary from region to region. For the sake of simplicity we use an average over a group of southern states (MD, VA, NC, TN, AL, GA, LA, MS, SC): $\beta = 6.826$ and $\beta_0 = 2.827$. 

We now define the first optimization problem we will study. Letting \hypertarget{def:VAP}{$\VAP_i$} and \hypertarget{def:BVAP}{$\BVAP_i$} respectively represent total and Black Voting-Age-Populations of each node $i \in \parcels$, define new auxiliary variables \hypertarget{def:y}{$\yy_{j}$} and \hypertarget{def:z}{$\zz_{j}$} to be the Black and total Voting-Age-Populations of each district $j \in \districts$. Their mathematical definitions are given in the following set:
    \begin{equation}
    \XYZFeasPoly_\textrm{B} = 
        \left\lbrace
        (\x,\y,\z) \in \XContPoly\!\times\!\Reals^{\numDistricts}\!\times\!\Reals^{\numDistricts}
        \MID
            \begin{aligned}
            \yy_j  &=  \textstyle{\sum_{i\in \parcels}}\BVAP_i\xx_{ij}
                &\forall\ j \in \districts  \\
            \zz_j  &=  \textstyle{\sum_{i\in \parcels}\VAP_i\xx_{ij}}
                &\forall\ j \in \districts 
            \end{aligned}
        \right\rbrace.
    \end{equation}
Model \ref{model:BR-Redistricting} aims to maximize the expected number of Black Representatives elected in a given state.
    \begin{model}[BR-Redistricting]\label{model:BR-Redistricting}
        \begin{equation*}
        \max_{(\x,\y,\z)\,\in\,\XYZFeasPoly_\textrm{B}}\sum_{j\,\in\, \districts}\brProbit\left(\tfrac{\yy_j}{\zz_j}\right)
        \end{equation*}
    \end{model}

Notice that $\brProbit$ does not have a convenient closed form and is evaluated over a rational term. This means that the objective function is non-linear and, in fact, nonconvex. We will discuss linearization techniques in Section \ref{sec:single-ratio}.

\subsection{Including Hispanic Population}

In census data, BVAP counts individuals who identify as Black alone (non-Hispanic), while HVAP counts individuals who identify as Hispanic or Latino of any race. These categories are mutually exclusive in our analysis, so individuals are not double-counted.
The number of Hispanic voters (\hypertarget{def:HVAP}{HVAP}) in a district is also correlated with the probability of electing a Black Representative. See \cite{bvap_southern} for details. Following the approach in \cite{bvap_southern}, using ratio of \BVAP\ and \HVAP\ to \VAP\ as features, define the function
    \begin{equation}\label{obj:HVAP}
   \Phi_{1, -\beta_0}(r)   \qquad \textrm{where} \qquad 
    r  \EQ  \frac{\beta_b\cdot\textrm{BVAP}+\beta_h\cdot\textrm{HVAP}}{\textrm{VAP}}.
    \end{equation}
Again, the $\beta$-values are drawn from historical data with an added regional distinction. The data suggests a clear difference in the effect of BVAP and HVAP in two regional areas, \emph{rim south} states and \emph{deep south states}, suggesting cultural differences in these areas.  For rim south states (MD, VA, NC, TN): $\beta_0 = -4.194$, $\beta_b = 0.975$, and $\beta_h = 0.3$. On the other hand, for deep south states (AL, GA, LA, MS, SC): $\beta_0 = -4.729$, $\beta_b = 1.044$, and $\beta_h = 0.3$. These values see more detailed discussion in~\cite{bvap_southern} wherein this objective is heuristically explored in connection with compactness.

Letting $\HVAP_i$ indicate the Hispanic Voting-Age-Population of each node $i \in \parcels$, redefine the auxiliary variable $\yy_j$ to be the combined $\HVAP$ and $\BVAP$ score for each district $j \in \districts$ as in \eqref{obj:HVAP}. The adjusted definition is given in the following set:
    \begin{equation}
    \XYZFeasPoly_\textrm{H} = 
        \left\lbrace
        (\x,\y,\z) \in \XContPoly\!\times\!\Reals^{\districts}\!\times\!\Reals^{\districts}
        \MID
            \begin{aligned}
            \yy_j  &=  \textstyle{\sum_{i\in \parcels}}(\beta_b\,\BVAP_i+\beta_h\,\HVAP_i)\xx_{ij}
                &\forall\ j \in \districts  \\
            \zz_j  &=  \textstyle{\sum_{i\in \parcels}\VAP_i\xx_{ij}}
                &\forall\ j \in \districts 
            \end{aligned}
        \right\rbrace.
    \end{equation}
Model \ref{model:HVAP} expands on Model \ref{model:BR-Redistricting} by considering the Hispanic population of each district.
    \begin{modelCustom}{1}{'}[BRH-Redistricting]\label{model:HVAP}
        \begin{equation*}
        \max_{(\x,\y,\z)\,\in\,\XYZFeasPoly_\textrm{H}}\sum_{j\,\in\,\districts}\Phi_{1, \beta_0}\left(\tfrac{\yy_j}{\zz_j}\right)
        \end{equation*}
    \end{modelCustom}

Again, the objective is nonconvex but any linearization strategy for Model \ref{model:BR-Redistricting} will also apply to Model \ref{model:HVAP} as both have a single ratio in the objective.

\subsection{The Expected Partisan Representation Objective}

Cook's Partisan Voting Index (CPVI) is a widely used measure of political lean in a region. It compares the average share of votes received by a party of interest to the national average over the two most recent presidential elections (here, 2016 and 2020). Let $V_{16}$ and $V_{20}$ denote the number of votes for the party of interest in 2016 and 2020, and let $T_{16}$ and $T_{20}$ be the total number of major-party votes cast in those years, respectively. Then we define:

\begin{equation}\label{eqn:CPVI}
    \CPVI  \EQ  50\left(\frac{V_{16}}{T_{16}}+\frac{V_{20}}{T_{20}}\right)-\theta,
\end{equation}
where $\theta$ is the average national support for the party of interest across both elections (e.g., $\theta = 51.69$ for the Democratic Party).

This measure is positive if the region supported the party of interest more than the national average, and negative otherwise. Following the approach in \cite{political}, the probability of a district electing a representative from the party of interest can be predicted by CPVI:

\begin{equation}
   \mathbb{P}(\textrm{party wins})  \EQ  \brProbit\left(r_1 + r_2\right) \qquad\textrm{where}\qquad 
    r_1  \EQ  \frac{V_{16}}{T_{16}}, \qquad
    r_2 = \frac{V_{20}}{T_{20}}.\label{obj:CPVI}
\end{equation}

Historical data suggests that $\beta = \frac{50}{4.8} = 10.42$ and $\beta_0 = \frac{\theta}{4.8} = 10.77$. See \cite{political} for more details on this calibration.

Let $T_{it}$ and $V_{it}$ represent the total number of major-party votes and the number of votes for the party of interest, respectively, cast in each unit \(i\in\parcels\) and year \(t\in\years = \{2016,2020\}\). Define the auxiliary variables $\yy_{jt}$ and $\zz_{jt}$ to be the total party and total votes assigned to each district \(j \in \{1,\dots,\numDistricts\}\) in year \(t\), respectively:

\begin{equation}
\XYZFeasPoly_\textrm{P} = 
    \left\lbrace
    (\x,\y,\z) \in \XContPoly\!\times\!\Reals^{\districts\times \years}\!\times\!\Reals^{\districts \times \years}
    \MID
        \begin{aligned}
        \yy_{jt}  &=  \textstyle{\sum_{i\in \parcels}} V_{it}\,\xx_{ij}
            &\forall\ j \in \districts,\ t\in\years  \\
        \zz_{jt}  &=  \textstyle{\sum_{i\in \parcels}} T_{it}\,\xx_{ij}
            &\forall\ j \in \districts,\ t\in\years 
        \end{aligned}
    \right\rbrace.
\end{equation}

Model \ref{model:CPVI} then maximizes the expected number of districts that elect a representative from the party of interest:
\begin{model}[CPVI-Redistricting]\label{model:CPVI}
    \begin{equation*}
    \max_{(\x,\y,\z)\,\in\,\XYZFeasPoly_\textrm{P}}\sum_{j\,\in\,\districts}\brProbit \left(\frac{\yy_{j\,16}}{\zz_{j\,16}}+\frac{\yy_{j\,20}}{\zz_{j\,20}}\right)
    \end{equation*}
\end{model}

Unlike Models \ref{model:BR-Redistricting} and \ref{model:HVAP}, the objective of Model \ref{model:CPVI} involves two rational expressions. We generalize our linearization techniques to handle multiple ratios in Section \ref{sec:multi-ratios}.

\section{MILP Relaxations and Approximations via Graph or Hypograph}
\label{sec:MILP-relaxations}
We use various mixed-integer linear programming (MILP) formulations to approximate the nonlinear objectives. First we introduce some terms and demonstrate their properties in support of a general approach.
    \begin{definition}[Graph and Hypograph]
    For a function $\brObj: \DomOne \to \Reals$ with $\DomOne \subseteq \Reals^n$, the \emph{graph} $\gra(\brObj)$ and \emph{hypograph} $\hyp(\brObj)$ of $\brObj$ are given by:
        \begin{equation*}
        \gra(\brObj) \coloneq \left\lbrace(\mathbf{r}, f) \in \DomOne\!\times\!\Reals \MID f = \brObj(\mathbf{r})\right\rbrace  \qquad\textrm{and}\qquad
        \hyp(\brObj) \coloneq \left\lbrace(\mathbf{r}, f) \in \DomOne\!\times\!\Reals \MID f \leq \brObj(\mathbf{r})\right\rbrace.
        \end{equation*}
    \end{definition}
    
    \begin{definition}[Graph and Hypograph Reformulations]
    (See, e.g., \cite{rockafellar1970convex} for standard definitions of graph and hypograph.)
    Optimization problems of the form 
        \begin{equation}\label{model:general}
        \max_{\mathbf{r}\,\in\,\DomOne}\sum_{j=1}^{\numDistricts}\brObj_j(\mathbf{r})
        \end{equation}
    may be equivalently reformulated in terms of either the graph or hypograph of $\brObj$:
        \begin{equation*}
        \max_{\mathbf{r}\,\in\,\DomOne}\left\{\sum_{j=1}^{\numDistricts}f_j  \MID (\mathbf{r},f_j) \in \gra(\brObj_j),\ \forall j \in \{1,\ldots,m\}\right\}
        \ \textrm{or}\ 
        \max_{\mathbf{r}\,\in\,\DomOne}\left\{\sum_{j=1}^{\numDistricts}f_j  \MID (\mathbf{r},f_j) \in \hyp(\brObj_j),\ \forall j \in \{1,\ldots,m\}\right\}.
        \end{equation*}
    Call these the \textit{Graph Reformulation} and the \textit{Hypograph Reformulation} respectively.
    \end{definition}

Each of the techniques presented here rely on identifying an efficient MILP approximation of either the graph or the hypograph. If the set $\Approx$ is such an approximation, then 
    $$
    \max_{\mathbf{r}\,\in\,\DomOne}\left\{\sum_{j=1}^{\numDistricts}f_j  \MID (\mathbf{r},f_j) \in \Approx\right\}
    $$
represents an approximation of \eqref{model:general}. In particular:
    \begin{proposition}\label{prop:GraphRelaxation}
    Suppose $\brObj_j: \DomOne \to \Reals$ for each $j \in M$ and consider a set $\Approx_j\subset\DomOne\!\times\!\Reals^{\numDistricts}$. If either $\gra(\brObj_j) \subseteq \Approx_j$ or $\hyp(\brObj_j) \subseteq \Approx_j$, then
        \begin{equation*}
        \max_{\mathbf{r}\,\in\,\DomOne}\sum_{j=1}^m \brObj_j(\mathbf{r}) 
            \LEQ  \max_{\mathbf{r}\,\in\,\XFeasPoly} \left\{ \sum_{j=1}^m f_j  \MID (\mathbf{r}, f_j) \in \Approx_j\right\}.
        \end{equation*}
    \end{proposition}
    \begin{proof}
    This result follows trivially from either the graph or hypograph reformulation.
    \end{proof}

Establishing an error guarantee for any such approximation is critical; we begin by considering the sources of error in our techniques.
    \begin{definition}[$\varepsilon$-Relaxation and $\varepsilon$-Approximation]
    Let $\mathscr{H}\subseteq\Reals^{n+1}$. 
    We say that $\Relax\supseteq \mathscr{H}$ is an \emph{$\varepsilon$-output-tolerance relaxation} (\emph{$\varepsilon$-relaxation} for short) of $\mathscr{H}$ if, for any $(\mathbf{r},f) \in \Relax$, there exists a value $f'\geq f-\epsilon$ such that $(\mathbf{r},f') \in \mathscr{H}$.

    We say that $\Approx\subseteq \Reals^{n+1}$ is an \emph{$\varepsilon$-output-tolerance approximation} (\emph{$\varepsilon$-approximation} for short) of $\mathscr{H}$ if, for any $(\mathbf{r},f) \in \Approx$, there exists a value $f' \in [f - \varepsilon, f + \varepsilon]$ such that $(\mathbf{r},f') \in \mathscr{H}$.
    \end{definition}
    \begin{figure}[!ht]
    \centering

    \includegraphics[scale = 0.7]{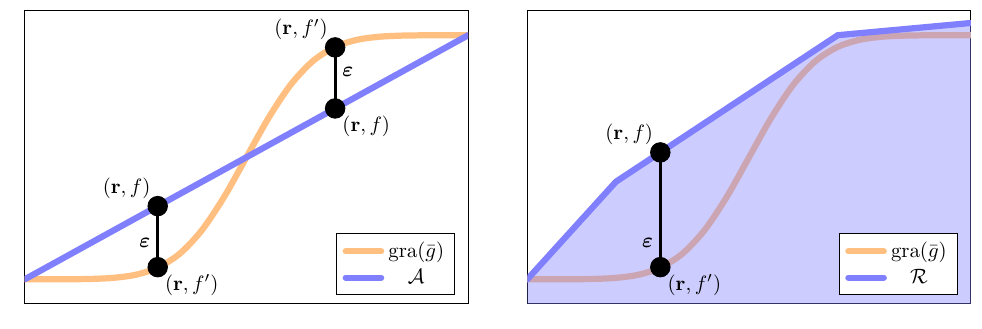}

    \caption{Left: $\Approx$ is an $\varepsilon$-approximation of $\gra(\brObj)$. Right: $\Relax$ is an $\varepsilon$-relaxation of $\gra(\brObj)$}
    \label{fig:enter-label}
    \end{figure}
Intuitively, $\Relax$ and $\Approx$ are sets with a maximum Hausdorff distance of $\epsilon$ from $\mathscr{H}$ in the last coordinate. We now establish a simple proposition related to aggregating these estimates.
    \begin{proposition}[Model Error]\label{prop:OverestimateError}
    \label{prop:UnderestimateError}
    Suppose $\brObj_j: \DomOne \to \Reals$ for each $j=\{1,\dots,m\}$ and consider a set $\Relax_j\subseteq\DomOne\!\times\!\Reals^{\numDistricts}$. If $\Relax_j$ is an $\varepsilon$-relaxation of either $\gra(\brObj_j)$ or $\hyp(\brObj_j)$, then
        \begin{equation*}
        \max_{\mathbf{r}\,\in\,\DomOne} \left\{ \sum_{j=1}^m f_j  \MID (\mathbf{r}, f_j) \in \Relax_j\right\}
        \GEQ
        \max_{\mathbf{r}\,\in\,\DomOne}\sum_{j=1}^m \brObj_j(\mathbf{r}) 
            \GEQ  \max_{\mathbf{r}\,\in\,\DomOne} \left\{ \sum_{j=1}^m f_j  \MID (\mathbf{r}, f_j) \in \Relax_j\right\} - \numDistricts\varepsilon.
        \end{equation*}
Similarly, if $\Approx_j$ is an $\varepsilon$-approximation, then 
        \begin{equation*}
        \max_{\mathbf{r}\,\in\,\DomOne} \left\{ \sum_{j=1}^m f_j  \MID (\mathbf{r}, f_j) \in \Approx_j\right\} + \numDistricts\varepsilon
        \GEQ
        \max_{\mathbf{r}\,\in\,\DomOne}\sum_{j=1}^m \brObj_j(\mathbf{r}) 
            \GEQ  \max_{\mathbf{r}\,\in\,\DomOne} \left\{ \sum_{j=1}^m f_j  \MID (\mathbf{r}, f_j) \in \Approx_j\right\} - \numDistricts\varepsilon.
        \end{equation*}
    \end{proposition}
    \begin{proof}
    Suppose first that $\Relax_j$ is an $\varepsilon$-relaxation of $\gra(\phi_j)$.  The upper bound is trivial, so we prove the lower bound. Suppose that $\mathbf{r}^*$ and $\mathbf{f}^*$ maximize $\sum_{j=1}^{\numDistricts}f_j$ while maintaining $(\mathbf{r}^*,f^*_j)\in\Relax_j$ for each $j\in\{1,\dots,\numDistricts\}$. There exists, by the definition of a $\varepsilon$-relaxation, a value $f'_j \geq f^*_j-\varepsilon$ such that $(\mathbf{r}^*,f'_j) \in \gra(\brObj_j)$; therefore,
        \begin{align*}
\max_{\mathbf{r}\,\in\,\DomOne}\sum_{j=1}^{\numDistricts}\brObj_j(\mathbf{r})
            &\EQ  \max_{\mathbf{r}\,\in\,\DomOne}\left\{\sum_{j=1}^{\numDistricts}f_j \MID(\mathbf{r},f_j) \in \gra(\brObj_j)\qquad\forall\ j\in\{1,\dots,\numDistricts\}\right\} \\
            &\GEQ  \sum_{j=1}^{\numDistricts}f'_j 
            \GEQ  \sum_{j=1}^{\numDistricts}\left(\overline{f}_j-\varepsilon\right)
            \EQ  \sum_{j=1}^{\numDistricts}\overline{f}_j-\numDistricts\varepsilon
        \end{align*}
    which establishes the lower bound. The logic is identical in the hypograph case.

    The subsequent $\varepsilon$-approximation bounds are proven similarly.
    \end{proof}

\noindent Proposition~\ref{prop:OverestimateError} allows us to characterize the approximation error in each of our techniques.

\section{Mixed Integer Approximations for Single Ratio Objectives} \label{sec:single-ratio}

This section describes several linearization approaches for objectives with a single ratio (i.e. Models \ref{model:BR-Redistricting} and \ref{model:HVAP}). We will focus on graph and hypograph relaxations of $\phi \colon \DomOne \to \R$ and $\psi \colon \DomTwo \to \R$ where $\DomOne \subseteq \R$, $\DomTwo \subseteq \R^2$, with
\begin{equation}\label{eq:AlternateExpressions}
    \DomOne \coloneq \left\{r=\frac{\yy}{\zz}\MID(\yy,\zz)\in\DomTwo\right\}
    \quad\textrm{and}\quad
    \psi(\yy,\zz) \coloneq \brObj\left(\frac{\yy}{\zz}\right).
    \end{equation}

We aim to identify strong approximations of the graph or hypograph of $\psi$:
    $$
    \gra(\psi) = \left\{(\yy,\zz,\ff)\in\DomTwo\!\times\!\Reals \MID \ff = \brObj\left(\frac{\yy}{\zz}\right)\right\}
    \quad\textrm{or}\quad
    \hyp(\psi) = \left\{(\yy,\zz,\ff)\in\DomTwo\!\times\!\Reals \MID \ff \leq \brObj\left(\frac{\yy}{\zz}\right)\right\}.
    $$
Additional properties of $\phi$ (increasing, bounded, etc) may be assumed in certain statements. We will then discuss approximation errors in these techniques when applied to our redistricting problems. 

We present approximations based on piecewise-linear and stepwise functions before presenting some radically different techniques: one based on a logarithmic-sized embedding of SOS2 constraints and one based on the SOCP approximation done by Ben-Tal and Nemirovski.

One approach that we do not describe is the \emph{Normalized Multiparametric Disaggregation Technique} (NMDT)~\cite{castro2015-nmdt}.  This approach has been useful in several classes of mixed integer bilinear quadratic programs, but was not effective for this problem.

 Their properties of each relaxation are summarized in this table, where $\ell$ denotes the number of breakpoints used in the approximation:
\begin{table}[ht]
    \centering
    \begin{tabular}{lccccc}
        \hline
        Relaxation & \# Binary Vars & \# Continuous Vars & Linear? & Relax/Approx & Error $(\varepsilon)$\\
        \hline
        PWL      & $\ell$ & $O(\ell)$ & \xm& $\varepsilon$-Approx  & $O\left(\tfrac{1}{\ell^2}\right)\footnotemark$\\
        VA-PWL   & $\ell$ & $O(\ell)$ &  \cm &$\varepsilon$-Approx       & data-dependent \\
        Step-Max & $\ell$ & $O(\ell)$   & \cm &$\varepsilon$-Relaxation      &  $O\left(\tfrac{1}{\ell}\right)$ \\
        Step-Exp.& $\ell$ & $O(\ell)$   & \cm &$\varepsilon$-Relaxation      &  $O\left(\tfrac{1}{\ell}\right)$ \\
        BN-Step   & $\ell$ & $O(\ell)$   & \cm &$\varepsilon$-Relaxation      &  $O\left(\tfrac{1}{2^\ell}\right)$ \\
        BN-Full   & $\ell$ & $O(\ell)$   & \cm &$\varepsilon$-Approx      &  $O\left(\tfrac{1}{2^\ell}\right)$ \\
        LogE & $\ell$ & $O(\log \ell)$   & \cm &$\varepsilon$-Approx      &  $O\left(\tfrac{1}{\ell}\right)$\\
        \hline
    \end{tabular}
    \caption{Each model uses $O(\ell)$ constraints and $O(\ell)$ continuous variables.}
    \label{tab:relaxations}
    \footnotetext{Assumes the approximated function is twice differentiable with a bounded second derivative.}
\end{table}

    \footnotetext{This holds provided the function has a bounded second derivative. But applying this bound assumes that the nonlinearity in the formulation is exactly handled.}
    We will later compare these approaches computationally.  Also, in Section~\ref{sec:extra-bounds}
we investigate strategies for adding additional inequalities to better describe the feasible region.

\subsection{Standard Piecewise-Linear Approximation}
Using traditional SOS2 constraints, it is simple enough to replace $\brObj$ with a piecewise-linear approximation over $\hypertarget{def:ell}{\numBreakpoints}$ breakpoints $\{b_1,\dots,b_\ell\}$. 

    \begin{definition}[Piecewise Linear Approximations]\label{def:PWLRelax}
    
    The following definitions follow standard constructions for piecewise-linear approximations; see, e.g., \cite{vielma2010} for a comprehensive survey.
    Given a finite increasing sequence of breakpoints $\{b_k\}_{k\in\breakpointsIter}$ the \textit{Piecewise-Linear Graph} and \textit{Hypograph Approximations} $\pwlg_{\{b_k\}}(\brObj)$ and $\pwlh_{\{b_k\}}(\brObj)$ of $\brObj$ are respectively given by:
        \begin{equation}\label{def:pwlgRelaxation:Set}
        \pwlg_{\{b_k\}}(\brObj)  \DEF  
            \left\{
            (r,f)\in \DomOne\!\times\!\Reals
            \MID
            \frac{f-\brObj(b_{k-1})}{\brObj(b_k)-\brObj(b_{k-1})}  =
            \frac{r-b_{k-1}}{b_k-b_{k-1}}
            \quad\textrm{if}\ 
            r\in[b_{k-1},b_k)
            \quad\forall\ k\in\breakpointsIter
            \right\},
        \end{equation}
        
        \begin{equation}\label{def:pwlhRelaxation:Set}
        \pwlh_{\{b_k\}}(\brObj)  \DEF  
            \left\{
            (r,f)\in  \DomOne\!\times\!\Reals
            \MID
            \frac{f-\brObj(b_{k-1})}{\brObj(b_k)-\brObj(b_{k-1})}  \leq
            \frac{r-b_{k-1}}{b_k-b_{k-1}}
            \quad\textrm{if}\ 
            r\in[b_{k-1},b_k)
            \quad\forall\ k\in\breakpointsIter
            \right\}.
        \end{equation}
    \end{definition}
    
    \begin{proposition}\label{lem:PiecewiseOverestimate}
    Suppose $\brObj(r) \in [b_0,b_\numBreakpoints]$ for all $r\in \DomOne$ and suppose that $\phi$ is monotonically increasing. 
    Then  $\pwlg_{\{b_k\}}(\brObj)$ is an
    $\varepsilon$-approximation) of $\gra(\brObj)$
    with 
        \begin{equation*}
        \varepsilon \leq \max_{k\,\in\,\breakpointsIter} \big\{\brObj(b_k) - \brObj(b_{k-1})\big\}.
        \end{equation*}
    The same relationships hold between $\pwlh_{\{b_k\}}(\brObj)$ and $\hyp(\brObj)$.
    \end{proposition}
This proposition follows from standard estimates on piecewise linear approximations of continuous functions.
Tighter estimates can be obtained by considering bounds on the derivatives of $\phi$. See Appendix~\ref{app:error}, Corollary~\ref{prop:taylor}.

We formulate the traditional SOS2 embedding of the piecewise linear approximation. Note that the SOS2 constraint is historically implemented using $\ell$ binary variables. 
In our computational experiments, we used Gurobi Optimizer~9.1.2 \cite{gurobi_release_9_1_2}, which implements SOS2 constraints using such a formulation (see Gurobi documentation).

    \begin{technique}{Piecewise Linear Approximation for Single Ratio Problems}{PWL}
    Identify a finite, increasing sequence of breakpoints $\{b_k\}_{k\in\breakpointsIter}$ 
 (abbreviated by $\{b_k\}$) such that $r\in[b_0,b_\numBreakpoints)$ for all $r\in\DomOne$ and define the following set:
        \begin{subequations}\label{tech:PWL:set}
        \begin{SetArray.}
            {\lowhat{\pwlg}_{\{b_k\}}\!\left(\psi\right)  \DEF }
            {(\yy,\zz,\ff,\lambdaVec) \in \DomTwo\!\times\!\Reals\!\times\![0,1]^\numBreakpoints}
                \ff &=  \textstyle{\sum_{k=1}^{\numBreakpoints}}\brObj(b_k)\lambdaVar_{k}  
                    \label{tech:PWL:set:objective}\\
                \yy &=  \zz\textstyle{\sum_{k=1}^{\numBreakpoints}}b_k\lambdaVar_{k} 
                    \label{tech:PWL:set:ratio}\\
                \lambdaVec  &\,\in\,  \text{SOS2}
                    \label{tech:PWL:set:SOS2}
        \end{SetArray.}
        \end{subequations}
   
    \end{technique}
    \begin{proposition}
         The projection to the space where $\tfrac{\yy}{\zz}\mapsto r$ satisfies the equivalence
\begin{equation}\label{tech:PWL:proj}
        \proj_{\left(r=\frac{\yy}{\zz},\,\ff\right)}\!\!\!\left(\lowhat{\pwlg}_{\{b_k\}}\!\left(\psi\right)\right)
        \EQ  \pwlg_{\{b_k\}}\!\left(\brObj\right).
        \end{equation}
    \end{proposition}
    \begin{proof} 
    Consider any particular point $(\overline{\yy},\overline{\zz})\in\DomTwo$ and identify the index $\overline{k}$ for which $\overline{r} = \tfrac{\overline{y}}{\overline{z}} \in \left[b_{\overline{k}-1},b_{\overline{k}}\right)$.
    Recall that \eqref{tech:PWL:set:SOS2} means that at most two consecutively indexed entries of $\lambdaVec$ may take non-zero values and that those values must sum to one. By \eqref{tech:PWL:set:ratio}, the two non-zero entries must be $\lambdaVar_{\overline{k}-1}$ and $\lambdaVar_{\overline{k}}$. Thus, \eqref{tech:PWL:set:ratio} and \eqref{tech:PWL:set:objective} collapse to
        \begin{align*}
        \begin{cases}
        \overline{r} \EQ  b_{\overline{k}-1}\left(1-\lambdaVar_{\overline{k}}\right)+b_{\overline{k}}\lambdaVar_{\overline{k}} \\
        \overline{f} \EQ  \phi\left(b_{\overline{k}-1}\right)\left(1-\lambdaVar_{\overline{k}}\right)+\phi\left(b_{\overline{k}}\right)\lambdaVar_{\overline{k}}
        \end{cases}
        &\Rightarrow\quad
        \begin{cases}
        \lambdaVar_{\overline{k}} = \frac{f-\brObj(b_{k-1})}{\brObj(b_k)-\brObj(b_{k-1})}  \\
        \lambdaVar_{\overline{k}} = \frac{r-b_{k-1}}{b_k-b_{k-1}}
        \end{cases} \\
        &\Rightarrow\quad
        \frac{f-\brObj(b_{k-1})}{\brObj(b_k)-\brObj(b_{k-1})}  = \frac{r-b_{k-1}}{b_k-b_{k-1}}.
        \end{align*}
    This establishes exactly the condition in \eqref{def:pwlgRelaxation:Set} so, upon projecting out the auxiliary variables $\lambdaVec$ and conjoining $\yy$ and $\zz$ into their ratio $r$, we are left with $\pwlg_{\{b_k\}}\!\left(\brObj\right)$.
    \end{proof}

    \begin{modelCustom}{1}{a}[Piecewise-Linear Approximation of BR-Redistricting]\label{labeling:PWL}
    We apply Approximation \ref{tech:PWL} to each district $j$ in Model \ref{model:BR-Redistricting} independently:
        \begin{equation*}
        \max_{(\x,\y,\z)\,\in\,\XYZFeasPoly_{\textrm{B}}}
        \left\{
        \sum_{j=1}^{\numDistricts}\ff_j
        \MID
        (\yy_j,\zz_j,\ff_j,\lambdaVec_j)\in\lowhat{\pwlg}_{\{b_k\}}(\psi)
        \quad
        \forall\ j\in\{1,\dots,\numDistricts\}
        \right\}.
        \end{equation*}
    \end{modelCustom}

Note that Model~\ref{labeling:PWL} does not necessarily produce an upper bound to Model \ref{model:BR-Redistricting} unless $\brObj$ is convex over its whole domain. Although we could perturb the PWL approximation to obtain such an upper bound; we found that Model \ref{labeling:PWL} performed poorly and did not pursue such variations.  In particular, the techniques in \cite{FRENZEN2010437,pwl_to_solve_minlp} could give alternative 1-dimensional piecewise-linear models.

Unfortunately, this common approximation technique does not address the nonconvexity of $\rr=\frac{\yy}{\zz}$. The placeholder variables $\y$ and $\z$ are, in practice, projected out in favor of their definitions from $\XYZFeasPoly_\textrm{B}$, but the nonconvexity remains. In our Gurobi implementation, we recast and discretize the first constraint of Approximation \ref{tech:PWL}. In particular, we introduce an additional quadratic constraint $\yy_j = \rr_j \zz_j$ so that the discrete variables $\rr_j$ can represent $\frac{\yy_j}{\zz_j}$ linearly. This auxiliary variable is made discrete by forcing $\rr_j = \frac{\rho}{1000}$ for $\rho \in \Z$.  Gurobi seems to employ different techniques when solving this problem, particularly if we give $\rr_j$ a high branching priority. 

\subsection{Linear approximation 
 of PWL: VA-PWL model}
 We consider a simplification of the PWL approach where we assume that the denominator $\zz$ is constant.  This increases the error in the approximation, but makes for a linear formulation by removing the need to approximate the quadratic constraints.
 
In our application, with general populations being roughly equal between districts, one might assume that the VAP of each district is roughly equal. This would allow for the restriction of $\zz_j$ to equal $\frac{1}{\numDistricts}\sum_{i\in\parcels}\VAP_i$, effectively linearizing the quadratic constraint. Such an approximation is questionable since a suitable $\varepsilon$ cannot be found, but some results for this VAP Approximated (VA-PWL) model are available in Section~\ref{sec:computations}.

\subsection{Stepwise Relaxation}
Given a monotone increasing objective function $\brObj$, the following stepwise approximation technique removes the need for quadratic constraints (unlike Model~\ref{labeling:PWL}) while simultaneously ensuring a strict upper bound. We first define some new notation and demonstrate its properties to support a general approach for relaxing the hypograph of $\brObj\left(\tfrac{\yy}{\zz}\right)$.

    \begin{definition}[Stepwise Approximation]\label{def:StepRelax}
    For a function $\brObj:\DomOne\rightarrow\Reals$ with $\DomOne\subseteq\Reals$ bounded and a finite sequence of increasing breakpoints $\{b_k\}_{k\in\breakpointsIter}$ the \textit{Stepwise Hypograph Approximation} $\step_{\{b_k\}}(\brObj)$ of $\brObj$ is given by:
        \begin{equation}\label{def:StepRelax:Set}
        \step_{\{b_k\}}(\brObj)  \DEF  
            \big\{
            (r,f)\in \DomOne\!\times\!\Reals
            \hquad\big|\ 
            f \leq \brObj(b_k) 
            \quad\textrm{if}\quad
            r\in[b_{k-1},b_k)
            \quad\forall k\in\breakpointsIter
            \big\}.
        \end{equation}
    \end{definition}
    
    \begin{proposition}\label{lem:StepOverestimate}
    If $\brObj$ is increasing over $\DomOne$ and $\DomOne\subseteq[b_0,b_\numBreakpoints]$, then $\step_{\{b_k\}}(\brObj)$ is a $\varepsilon$-relaxation of $\hyp(\brObj)$ with
        \begin{equation*}
        \hyp(\brObj) \subseteq \step_{\{b_k\}}(\brObj)
        \qquad\textrm{and}\qquad
        \varepsilon = \max_{k\,\in\,\breakpointsIter} \big\{\brObj(b_k) - \brObj(b_{k-1})\big\}.
        \end{equation*}
    \end{proposition}
    \begin{proof}
    Suppose, for the sake of contradiction, that there exists a point $(\overline{r},\overline{f})\in\hyp(\brObj)$ which does \textit{not} belong to $\step_{\{b_k\}}(\brObj)$ and identify the index $\overline{k}$ for which $\overline{r}\in[b_{\overline{k}-1},b_{\overline{k}})$. By its set membership, we have
        $$
        \brObj(b_{\overline{k}})  \LS  \overline{f}  \LEQ  \brObj(\overline{r})
        $$
    However, $\brObj$ is assumed to be increasing, so it should be that $\brObj(b_{\overline{k}}) \geq \brObj(\overline{r})$. This contradiction establishes the containment of $\hyp(\brObj)$ within $\step_{\{b_k\}}(\brObj)$.
    
    To verify the error term $\varepsilon$, we reverse our assumptions. Consider a point $(\overline{r},\overline{f})\in\step_{\{b_k\}}(\brObj)$ which does \textit{not} belong to $\hyp(\brObj)$; that is,
        $$
        \brObj(\overline{r})  \LS  \overline{f}  \LEQ  \brObj(b_{\overline{k}})
        $$
    for the index $\overline{k}$ such that $\overline{r}\in[b_{\overline{k}-1},b_{\overline{k}})$. Since $\brObj$ is increasing, it must be that 
        \begin{align*}
        \brObj(\overline{r})  \GEQ  \brObj(b_{\overline{k}-1}) 
            \EQ  \brObj(b_{\overline{k}-1}) + \brObj(b_{\overline{k}}) - \brObj(b_{\overline{k}}) 
            \EQ  \brObj(b_{\overline{k}}) - \varepsilon.
        \end{align*}
    \end{proof}

    We demonstrate an MILP embedding of the stepwise hypograph approximation.
    
    \vspace{1em}
    \begin{technique}{Stepwise Relaxation for Single Ratio Problems}{Step}
    Identify a finite, increasing sequence of breakpoints  $\{b_k\}_{k\in\breakpointsIter}$ such that $b_\numBreakpoints>r$ for all $r\in\DomOne$ and define the following set:
    \begin{subequations}\label{tech:Step:set}
    \vspace{1em}\\$\lowhat{\step}_{\{b_k\}}\!\left(\psi\right)  \DEF \vspace{-1em}$
    \begin{SetArray.}
    {\qqquad}
    {(\yy,\zz,\ff,\deltaVec) \in \DomTwo\!\times\!\Reals\!\times\!\{0,1\}^\numBreakpoints}
        \ff - \brObj\left(b_k\right)  &\leq  \bigN\left(1-\deltaVar_k\right) 
            & \forall\ k\in\breakpointsIter
            \label{tech:Step:set:objective}\\
        b_k\zz - \yy  &\leq  \bigM\deltaVar_k
            & \forall\ k\in\breakpointsIter
            \label{tech:Step:set:ratio}\\
        \deltaVar_{k-1}  &\leq  \deltaVar_{k} 
            & \forall\ k\in\breakpointsIter  
            \label{tech:Step:set:symmetry}
    \end{SetArray.}
\end{subequations}

    \end{technique}
    \begin{proposition}
        If $\bigM$ and $\bigN$ are sufficiently large,  we have     
        \begin{equation}\label{tech:Step_Projection}
        \proj_{\left(r=\frac{\yy}{\zz},\,\ff\right)}\!\!\!\left(\lowhat{\step}_{\{b_k\}}\!\left(\psi\right)\right)
        \EQ  \step_{\{b_k\}}\!\left(\brObj\right).
        \end{equation}
    \end{proposition}
    \begin{proof} 
    Consider any particular point $(\overline{\yy},\overline{\zz})\in\DomTwo$ and identify the index $\overline{k}$ for which $\overline{r} = \tfrac{\overline{y}}{\overline{z}} \in \left[b_{\overline{k}-1},b_{\overline{k}}\right)$. Notice that, by \eqref{tech:Step:set:ratio}, the indicator variable $\deltaVar_k$ must take the value one for all $k\geq \overline{k}$ while those with smaller indices are free to take the value zero. Given this, \eqref{tech:Step:set:objective} reduces to $\ff \leq \brObj(b_{\overline{k}})$ since $\brObj$ is increasing. This establishes exactly the condition in \eqref{def:StepRelax:Set} so that, upon projecting out the indicator variables $\deltaVec$ and conjoining $\yy$ and $\zz$ into their ratio $r$, we are left with $\step_{\{b_k\}}\!\left(\brObj\right)$. Notice that \eqref{tech:Step:set:symmetry} is ultimately redundant, we include it to reduce the size of the branch-and-bound tree.
    \end{proof}

    \begin{modelCustom}{1}{b}[Stepwise Approximation of BR-Redistricting]\label{labeling:Step}
    We apply Approximation \ref{tech:Step} to each district $j$ in Model \ref{model:BR-Redistricting} independently:
        \begin{equation*}
        \max_{(\x,\y,\z)\,\in\,\XYZFeasPoly_\textrm{B}}
        \left\{
        \sum_{j=1}^{\numDistricts}f_j
        \MID
        (\yy_j,\zz_j,\ff_j,\deltaVec^j)\in \lowhat{\step}_{\{b_j\}}\!\left(\brObj\right)
        \quad
        \forall\ j\in\{1,\dots,\numDistricts\}
        \right\}
    \end{equation*}
    In our implementations, the placeholder variables $\y$ and $\z$ are projected out in favor of their definitions from $\XYZFeasPoly_B$. 
    \end{modelCustom}
    \begin{corollary}
    Model~\ref{labeling:Step} is an upper bound on Model~\ref{model:BR-Redistricting} with a maximum error of: 
        $$
        \sum_{j=1}^{\numDistricts}\left(\max_{k\,\in\,\breakpointsIter} \big\{\brObj(b_{j,k}) - \brObj(b_{j,k-1})\big\}\right).
        $$
    \end{corollary}
    \begin{proof}
    Since $\brObj$ is assumed to be increasing over $\DomOne$, this follows from \eqref{tech:Step_Projection} by Proposition~\ref{prop:GraphRelaxation}, Proposition~\ref{prop:OverestimateError}, and Proposition~\ref{lem:StepOverestimate}.
    \end{proof}

Although stepwise functions are traditionally considered a weaker approximation, this technique sidesteps nonconvexity that a direct piecewise linear model inherits (see Table~\ref{tab:PWLvStepComparison}). We define Step-Max and Step-Exp. models that differ only in the choice of breakpoints, which is our next topic of discussion.

We chose to use distinct values of $\bigN$ and $\bigM$ for each district $j$ and each breakpoint index $k$. In each of our redistricting settings we have $\yy_j \geq 0$ and $\zz_j \leq (1+\tau)\tildep$; thus $\brObj\left(b_{j\numBreakpoints}\right) - \brObj\left(b_{jk}\right)$ is a valid value of $\bigN_{jk}$ while $b_{jk}(1+\tau)\tildep$ is a sufficiently large value of $\bigM_{jk}$. Knowing that $\frac{\yy_j}{\zz_j} \in [0,1]$ for every $j\in\{1,\dots,\numDistricts\}$ and $(\x,\y,\z)\in\XYZFeasPoly_\textrm{B}$, it can be convenient to set $b_{j\numBreakpoints} = 1$; for notational convenience, we also define $b_{j0} = 0$ for each $j\in\{1,\dots,\numDistricts\}$. Finding larger values for $b_{j0}$ and smaller values for $b_{j\numBreakpoints}$ which maintain $b_{j0}  \leq  \frac{\yy_j}{\zz_j}  \leq  b_{j\numBreakpoints}$ for each $j\in\{1,\dots,\numDistricts\}$ and $(\x,\y,\z)\in\XYZFeasPoly_\textrm{B}$ is a good way to increase breakpoint density and improve the resulting bound. We discuss breakpoint choice in more detail in Sections \ref{sub:optBreakpointChoices} and \ref{sub:BoundingRatios}.

\subsection{Optimizing breakpoint choices for Approximation \ref{tech:Step}}\label{sub:optBreakpointChoices}
We assume here that $\phi$ is strictly increasing.  Hence, the inverse function $\phi^{-1}$ is well defined on the range of $\phi$.

Given some $(\x,\y,\z)\in\XYZFeasPoly_\textrm{B}$, it is easy to compute the exact error of Model \ref{labeling:Step} in approximating Model \ref{model:BR-Redistricting}:
    $$
    \textrm{error}(\y,\z)  \EQ  \sum_{j=1}^{\numDistricts}\left(\ff_j-\brObj\left(\tfrac{\yy_j}{\zz_j}\right)\right)  \EQ  \sum_{j=1}^{\numDistricts}\left(\brObj(b_{j\overline{k}})-\brObj\left(\tfrac{\yy_j}{\zz_j}\right)\right)
    $$
where $b_{j\overline{k}}$ is the smallest breakpoint greater than $\tfrac{\yy_j}{\zz_j}$. We propose two methods for choosing breakpoints with respect to this error. 

\paragraph{Minimizing Maximum Error (Step-Max)}
Choosing breakpoints that induce a small maximum error is an obvious way to ensure the quality of the approximation in Technique \ref{tech:Step}. Having already identified an expression for maximum error, we proceed to demonstrate the breakpoint selection which minimizes maximum error.

\begin{proposition}
    When implementing Technique \ref{tech:Step}, distributing the breakpoints evenly across the range of $\brObj$ will give an approximation that minimizes maximum error. In particular, this minimum is achieved by the breakpoints defined
        $
        b_{jk}^* = \brObj^{-1}\left(\brObj\left(b_{j0}\right)+k\varepsilon_j\right)
        $
    where
        $
        \varepsilon_j = \frac{\brObj(b_{j\numBreakpoints})-\brObj(b_{j0})}{\numBreakpoints},
        $
    and $b_{j0}$ and $b_{j\numBreakpoints}$ are respectively the infimum and supremum of $\DomOne_j = \left\{\tfrac{\yy_j}{\zz_j}\ :\ (\x,\y,\z)\in\XYZFeasPoly\right\}$.
\end{proposition}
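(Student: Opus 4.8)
The plan is to exploit the separability already used in the proof of Proposition~\ref{prop:tech:singleRatioThreshold} and then reduce the problem to an elementary extremal fact about partitioning an interval. Since the maximum error $\max_{\y,\z\in\poly}\xi(\y,\z) = \sum_{j\in\districts}\max_{t\in\breakpoints}\{\phi_j(b_{jt}) - \phi_j(b_{j(t-1)})\}$ is a sum of terms, each depending only on the breakpoints $\{b_{jt}\}_t$ for a single index $j$, it suffices to minimize each summand independently. So first I would fix an arbitrary $j$, drop the subscript, and treat the endpoints $b_0 = \inf(\mathscr{D})$ and $b_\ell = \sup(\mathscr{D})$ as fixed while optimizing over the interior breakpoints $b_1 < \dots < b_{\ell-1}$.

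The key step is a change of variables that linearizes the objective. Because $\phi$ is strictly increasing and continuous on $[b_0, b_\ell]$, it is a bijection onto $[\phi(b_0), \phi(b_\ell)]$ with continuous inverse $\phi^{-1}$. Setting $y_t := \phi(b_t)$, the choice of an increasing breakpoint sequence $b_0 < b_1 < \dots < b_\ell$ corresponds bijectively to the choice of an increasing sequence $y_0 < y_1 < \dots < y_\ell$ with the endpoints $y_0 = \phi(b_0)$ and $y_\ell = \phi(b_\ell)$ held fixed, and the per-$j$ objective becomes simply $\max_{t}(y_t - y_{t-1})$.

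Now I would apply a standard averaging argument. Writing $d_t = y_t - y_{t-1} \ge 0$, the gaps satisfy $\sum_{t=1}^{\ell} d_t = \phi(b_\ell) - \phi(b_0)$, a constant independent of the interior breakpoints. Hence $\max_t d_t \ge \frac{1}{\ell}\sum_t d_t = \varepsilon_j$, with equality precisely when every $d_t = \varepsilon_j$. The equal-gap choice $y_t = \phi(b_0) + t\varepsilon_j$ is feasible and attains this lower bound, so it is the unique minimizer; inverting gives $b_t^* = \phi^{-1}\!\big(\phi(b_0) + t\varepsilon_j\big)$, exactly the claimed formula. Summing the minimized terms back over $j$ then recovers the global minimizer.

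The proof is essentially elementary, so I do not anticipate a serious combinatorial obstacle; the main thing to be careful about is the regularity of $\phi$. The change of variables and the uniqueness of the minimizer rely on $\phi$ being continuous and \emph{strictly} increasing on the relevant range, so that $\phi^{-1}$ is well defined and single-valued --- both hold for the normal CDF $\Phi$, but one would need a generalized inverse (and uniqueness could fail) for a merely weakly increasing or discontinuous $\phi$. A secondary point worth flagging is that the argument presumes the outer breakpoints are pinned at the infimum and supremum of $\mathscr{D}_j$; one should note that widening them beyond $[\inf\mathscr{D}_j,\ \sup\mathscr{D}_j]$ only enlarges $\sum_t d_t$, and hence the achievable maximum error, which is precisely why the tight endpoint choice is assumed.
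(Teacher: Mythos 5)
Your proof is correct and rests on the same core fact as the paper's: after passing to the $\phi_j$-transformed gaps, the telescoping identity $\sum_t\big(\phi_j(b_{jt})-\phi_j(b_{j(t-1)})\big)=\phi_j(b_{j\ell})-\phi_j(b_{j0})$ forces the largest gap to be at least the average $\varepsilon_j$. The paper phrases this as a contradiction (if every gap were below $\varepsilon_j^*$, the telescoped sum could not reach $\sup(\mathscr{D}_j)$), whereas you state it directly as an averaging bound and additionally note uniqueness and the regularity of $\phi_j$ needed for $\phi_j^{-1}$ --- minor refinements, but the argument is essentially the same.
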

\begin{proof}
    Recall that $\brObj$ is assumed to be increasing under Model \ref{labeling:Step} so $\brObj\big(b_{j\numBreakpoints}\big)$ and $\brObj\big(b_{j0}\big)$ are the respective max and min of the range of $\brObj$ for each district $j$. Thus, $\varepsilon_j$ yields the equidivision of the range into $\numBreakpoints$ parts. 

    By the construction of $b_{jk}^*$, we have
        $
        \brObj(b_{jk}^*)-\brObj\left(b_{j\,(k-1)}^*\right) = \varepsilon_j^*
        $
    for every $k \in \breakpointsIter$ and $j \in \districts$. Thus, the maximum error of this construction is $\sum_{j=1}^{\numDistricts}\varepsilon_j^*$. Suppose, for the sake of contradiction, that there exists another sequence $\big\{b_{jk}^\dagger\big\}_{k\in[\![\numBreakpoints]\!]}$ of breakpoints which induces a maximum error strictly less than $\sum_{j=1}^{\numDistricts}\varepsilon_j^*$. There must exist an index $\jfixed\in\{1,\dots,\numDistricts\}$ for which 
        $
        \brObj\left(b_{\jfixed k}^\dagger\right)-\brObj\left(b_{\jfixed\,(k-1)}^\dagger\right) < \varepsilon_{\jfixed}^*.
        $
    for each $k\in\breakpointsIter$. That is
        \begin{align*}
        \brObj\big(b_{\jfixed\numBreakpoints}^\dagger\big)
            &\EQ \brObj(b_{\jfixed0}) + \sum_{k=1}^{\numBreakpoints}\left(\brObj\big(b_{\jfixed k}^\dagger\big) - \brObj_{\jfixed}\left(b_{\jfixed\,(k-1)}^\dagger\right)\right) \\
            &\LS \inf(\DomOne_j) + \numBreakpoints\varepsilon_{\jfixed}^* 
            \EQ  \brObj\big(\sup(\DomOne_j)\big)
        \end{align*}
    which contradicts our earlier assertion that $b_{j\numBreakpoints} \geq \sup(\DomOne_j)$. 
\end{proof}
We refer to the stepwise model with breakpoints spread uniformly over the range as Step-Max since it has minimal maximum error for a given number of breakpoints $\ell$. Some computational results are given in Table \ref{tab:PWLvStepComparison} on page \pageref{tab:PWLvStepComparison}.

\paragraph{Minimizing Expected Error (Step-Exp.)}
Suppose that the values of $\tfrac{\yy_j}{\zz_j}$ are drawn randomly from its domain $\DomOne_j$ according to some CDF $P_j\left(\tfrac{\yy_j}{\zz_j}\right)$. We can use the distribution to compute the expected error of an approximation. We were unable to find an analytic solution to the problem of minimizing the expected error for our objectives, but SciPy's~\cite{2020SciPy-NMeth} optimize function seems to solve this problem quickly (numerically) over a uniform distribution $\left(\text{i.e. }P_j(\tfrac{\yy_j}{\zz_j}) = \frac{\tfrac{\yy_j}{\zz_j}-b_{j0}}{b_{j\numBreakpoints}-b_{j0}}\right)$. To be specific, we feed the \verb|scipy.optimize.minimize| command the objective
    $$
    \sum_{j=1}^{\numDistricts}\sum_{k=1}^{\numBreakpoints} \brObj(b_{jk})\left(b_{jk}-b_{j\,(k-1)}\right)
    $$
with the constraints $b_{j0} = \inf(\DomOne_j)$ and $b_{j\numBreakpoints} = \sup(\DomOne_j)$ alongside the gradient
    $$
    \brObj'(b_{jk})\left(b_{jk}-b_{j\,(k-1)}\right)+\brObj(b_{jk})-\brObj\left(b_{j\,(k+1)}\right)
    $$
and an initial set of breakpoints spread evenly over the domain.

We implement a stepwise model with breakpoints chosen by SciPy to minimize expected error over a uniform distribution and call it the Step-Exp model. Some computational results are given in Table \ref{tab:PWLvStepComparison} on page \pageref{tab:PWLvStepComparison}.

\begin{remark}
For specific choices of $\brObj$, there may be elegant closed-form solutions for the breakpoints.   For instance, if $\brObj$ is linear, then the minimum expected error breakpoints are uniformly distributed. 
\end{remark}

    \begin{figure}[!ht]
    \centering
        \begin{subfigure}{0.485\textwidth}
        \centering
       
\begin{tikzpicture}
\begin{axis}[
xticklabel=\empty,
yticklabel=\empty,
tick style={draw=none},
legend entries={$\gra(\brObj)$,$\step\!\left(\brObj\right)$},
legend pos=south east,
]
\addplot [smooth, line width=3pt, orange!50, line cap=round]
table {%
0 0.00234931639457267
0.02 0.00356746558185301
0.04 0.00532527455585051
0.06 0.00781505529955336
0.08 0.01127659061793
0.1 0.0160004251340584
0.12 0.0223280224771477
0.14 0.0306475996293951
0.16 0.0413845745115257
0.18 0.0549858892889104
0.2 0.0718980120615618
0.22 0.0925391499624855
0.24 0.117267061258748
0.26 0.146344725900874
0.28 0.179906886171658
0.3 0.217930955058255
0.32 0.260215880552359
0.34 0.306372166403236
0.36 0.355825371524136
0.38 0.407834112474073
0.4 0.461522028248008
0.42 0.515921546941505
0.44 0.570025858947631
0.46 0.622844473981992
0.48 0.673457283763879
0.5 0.721062242578815
0.52 0.765012583210308
0.54 0.804840776017323
0.56 0.840268011002484
0.58 0.871199598792277
0.6 0.897708117832569
0.62 0.920007201100905
0.64 0.938419450164447
0.66 0.953342065915221
0.68 0.965213449834059
0.7 0.974483370502769
0.72 0.981588450176675
0.74 0.986933849428018
0.76 0.990881235454448
0.78 0.993742495286192
0.8 0.995778239629907
0.82 0.997199937811329
0.84 0.998174500550846
0.86 0.998830238643764
0.88 0.999263319742254
0.9 0.999544073929743
0.92 0.99972272328036
0.94 0.999834305454005
0.96 0.99990271336966
0.98 0.999943879121587
};
\addplot [smooth, line width=3pt, blue!50, line cap=round]
coordinates {%
(0, 0.0995143063504609)
(0.226, 0.0995143063504609)
};
\addplot [smooth, line width=3pt, blue!50, line cap=round]
table {%
0.226 0.200276502615947
0.291 0.200276502615947
};
\addplot [smooth, line width=3pt, blue!50, line cap=round]
table {%
0.291 0.299222499068621
0.337 0.299222499068621
};
\addplot [smooth, line width=3pt, blue!50, line cap=round]
table {%
0.337 0.399903070951487
0.377 0.399903070951487
};
\addplot [smooth, line width=3pt, blue!50, line cap=round]
table {%
0.377 0.499586695871437
0.414 0.499586695871437
};
\addplot [smooth, line width=3pt, blue!50, line cap=round]
table {%
0.414 0.599296268461439
0.451 0.599296268461439
};
\addplot [smooth, line width=3pt, blue!50, line cap=round]
table {%
0.451 0.700057536212316
0.491 0.700057536212316
};
\addplot [smooth, line width=3pt, blue!50, line cap=round]
table {%
0.491 0.79914242890351
0.537 0.79914242890351
};
\addplot [smooth, line width=3pt, blue!50, line cap=round]
table {%
0.537 0.9001228699136
0.602 0.9001228699136
};
\addplot [smooth, line width=3pt, blue!50, line cap=round]
table {%
0.602 0.999967264848155
0.999 0.999967264848155
};
\end{axis}

\end{tikzpicture}

        \caption{Minimizes Maximum Error}
        \end{subfigure}
    ~
        \begin{subfigure}{0.485\textwidth}
        \centering
        
\begin{tikzpicture}
\begin{axis}[
xticklabel=\empty,
yticklabel=\empty,
tick style={draw=none},
legend entries={$\gra(\brObj)$,$\step\!\left(\brObj\right)$},
legend pos=south east,
]
\addplot [smooth, line width=3pt, orange!50, line cap=round]
table {%
0 0.00234931639457267
0.02 0.00356746558185301
0.04 0.00532527455585051
0.06 0.00781505529955336
0.08 0.01127659061793
0.1 0.0160004251340584
0.12 0.0223280224771477
0.14 0.0306475996293951
0.16 0.0413845745115257
0.18 0.0549858892889104
0.2 0.0718980120615618
0.22 0.0925391499624855
0.24 0.117267061258748
0.26 0.146344725900874
0.28 0.179906886171658
0.3 0.217930955058255
0.32 0.260215880552359
0.34 0.306372166403236
0.36 0.355825371524136
0.38 0.407834112474073
0.4 0.461522028248008
0.42 0.515921546941505
0.44 0.570025858947631
0.46 0.622844473981992
0.48 0.673457283763879
0.5 0.721062242578815
0.52 0.765012583210308
0.54 0.804840776017323
0.56 0.840268011002484
0.58 0.871199598792277
0.6 0.897708117832569
0.62 0.920007201100905
0.64 0.938419450164447
0.66 0.953342065915221
0.68 0.965213449834059
0.7 0.974483370502769
0.72 0.981588450176675
0.74 0.986933849428018
0.76 0.990881235454448
0.78 0.993742495286192
0.8 0.995778239629907
0.82 0.997199937811329
0.84 0.998174500550846
0.86 0.998830238643764
0.88 0.999263319742254
0.9 0.999544073929743
0.92 0.99972272328036
0.94 0.999834305454005
0.96 0.99990271336966
0.98 0.999943879121587
};
\addplot [smooth, line width=3pt, blue!50, line cap=round]
table {%
0 0.0291088240460173
0.13667514421081 0.0291088240460173
};
\addplot [smooth, line width=3pt, blue!50, line cap=round]
table {%
0.13667514421081 0.0910179045753113
0.218647386569308 0.0910179045753113
};
\addplot [smooth, line width=3pt, blue!50, line cap=round]
table {%
0.218647386569308 0.18266141583056
0.281531040145548 0.18266141583056
};
\addplot [smooth, line width=3pt, blue!50, line cap=round]
table {%
0.281531040145548 0.296343351654879
0.33578278407067 0.296343351654879
};
\addplot [smooth, line width=3pt, blue!50, line cap=round]
table {%
0.33578278407067 0.424379895534015
0.386214315582847 0.424379895534015
};
\addplot [smooth, line width=3pt, blue!50, line cap=round]
table {%
0.386214315582847 0.559240563759234
0.435986529775246 0.559240563759234
};
\addplot [smooth, line width=3pt, blue!50, line cap=round]
table {%
0.435986529775246 0.693275963920814
0.488156710780103 0.693275963920814
};
\addplot [smooth, line width=3pt, blue!50, line cap=round]
table {%
0.488156710780103 0.818338691639527
0.547326709053263 0.818338691639527
};
\addplot [smooth, line width=3pt, blue!50, line cap=round]
table {%
0.547326709053263 0.924898194713199
0.624935929183523 0.924898194713199
};
\addplot [smooth, line width=3pt, blue!50, line cap=round]
table {%
0.624935929183523 0.999968194659946
1 0.999968194659946
};
\end{axis}

\end{tikzpicture}

        \caption{Minimizes Expected Error}
        \end{subfigure}
    \caption{A visual comparison of breakpoint selection methods}
    \label{fig:breakpointComparison}
    \end{figure}

\subsection{Dominating Inequalities for Technique~\ref{tech:Step}} 

We can strengthen the MIP formulation of $\lowhat{\step}_{\{b_k\}}$ from Technique~\ref{tech:Step} by replacing constraint \eqref{tech:Step:set:objective} \textemdash namely,
\[
\ff - \brObj\left(b_k\right) \leq \bigN\left(1 - \deltaVar_k\right),
\]
with a single, tighter inequality, provided that $\phi$ is an increasing function.

This strengthening is possible due to the structure of the binary vector $\deltaVec$ imposed by constraint \eqref{tech:Step:set:symmetry}, which ensures that $\deltaVar = (0, 0, \dots, 0, 1, 1, \dots, 1)$ for some transition index. Under this structure, we can express a bound on $\ff$ in terms of a telescoping sum that reflects the monotonicity of $\phi$:

\begin{equation}
\label{eq:dominatinh_ineq}
\ff \LEQ \brObj(b_{\numBreakpoints}) - \sum_{k=1}^{\numBreakpoints-1} \left(\brObj(b_{k+1}) - \brObj(b_k)\right) \deltaVar_k.
\end{equation}

This inequality dominates the family of individual constraints in \eqref{tech:Step:set:objective} and improves the tightness of the relaxation while preserving correctness.

\begin{lemma}
Inequality~\eqref{eq:dominatinh_ineq} strictly dominates \eqref{tech:Step:set:objective}. 
\end{lemma}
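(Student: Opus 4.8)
The plan is to exploit the fact that, once the ordering constraints \eqref{eq:TechA:order_delta} and integrality are imposed, every feasible vector $\deltaVar_j$ is a monotone $0/1$ string, and to show that on such strings the aggregated inequality \eqref{eq:dominatinh_ineq} reproduces exactly the tightest member of the family \eqref{eq:TechA:fbound}, while being strictly tighter on fractional points. First I would reduce to a single district: since every constraint is indexed by $j$ and the two inequalities involve only $\ff_j$ and $\deltaVar_{j\cdot}$, it suffices to argue for a fixed $j$, which I drop from the notation, abbreviating $g_\t := \brObj(b_\t)$ and noting $g_1 < g_2 < \dots < g_{\numBreakpoints}$ because $\brObj$ and the breakpoints are increasing. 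By \eqref{eq:TechA:order_delta} together with $\deltaVar_\t \in \{0,1\}$, any feasible $\deltaVar$ has the form $(0,\dots,0,1,\dots,1)$; let $t^*$ be the smallest index with $\deltaVar_{t^*}=1$, using the convention $t^* = \numBreakpoints+1$ when $\deltaVar \equiv 0$.

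The key computation is a telescoping identity. Evaluating the right-hand side of \eqref{eq:dominatinh_ineq} at such a $\deltaVar$ gives $g_{\numBreakpoints} - \sum_{\t=t^*}^{\numBreakpoints-1}(g_{\t+1}-g_\t) = g_{t^*}$, collapsing to $g_{\numBreakpoints}$ when $t^* = \numBreakpoints+1$. On the other hand, the family \eqref{eq:TechA:fbound} forces $\ff \le g_\t$ for every $\t \ge t^*$ and is slack (through the big-$M$ term) for $\t < t^*$, so its binding value is $g_{t^*}$ since $g$ is increasing. Hence on integer-feasible points the new inequality imposes exactly the same bound on $\ff$ as the family, which establishes that \eqref{eq:dominatinh_ineq} is valid and cuts off no feasible solution.

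To obtain domination I would compare the two right-hand sides pointwise over the relaxation $\deltaVar \in [0,1]^{\numBreakpoints}$ subject to \eqref{eq:TechA:order_delta}. Writing $R(\deltaVar) = g_{\numBreakpoints} - \sum_{\t=1}^{\numBreakpoints-1}(g_{\t+1}-g_\t)\deltaVar_\t$ for the new bound and $R_t(\deltaVar) = g_\t + M(1-\deltaVar_\t)$ for the $\t$-th old bound, the goal is $R(\deltaVar) \le R_t(\deltaVar)$ for all $\t$, so that $\ff \le R(\deltaVar)$ implies every member of \eqref{eq:TechA:fbound}. The only delicate case is $\deltaVar_\t$ near $1$, where the big-$M$ slack vanishes; there the monotonicity $\deltaVar_s \ge \deltaVar_\t$ for $s \ge \t$ forces $\sum_{s \ge \t}(g_{s+1}-g_s)\deltaVar_s \ge (g_{\numBreakpoints}-g_\t)\deltaVar_\t$, which drives $R(\deltaVar)$ at or below $g_\t$. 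I expect this to be the main obstacle: one must track the big-$M$ terms carefully and invoke \eqref{eq:TechA:order_delta} to rule out the boundary case, since without the ordering constraint the aggregated inequality would not even be valid, let alone dominating.

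Finally, for strictness I would exhibit one relaxation point feasible for the whole family \eqref{eq:TechA:fbound} but violated by \eqref{eq:dominatinh_ineq}. Taking the constant fractional vector $\deltaVar_\t \equiv \theta$ with $\theta \in (0,1)$, which trivially respects \eqref{eq:TechA:order_delta}, the new bound evaluates to $R(\deltaVar) = (1-\theta)g_{\numBreakpoints} + \theta g_1 \in [g_1, g_{\numBreakpoints}]$, whereas every old bound is $R_t(\deltaVar) = g_\t + M(1-\theta)$, which is of order $M$. Choosing $\ff$ strictly between these two values then yields a point satisfying all of \eqref{eq:TechA:fbound} yet violating \eqref{eq:dominatinh_ineq}, completing the proof of strict domination; the telescoping identity and this separating point are routine, so the substantive content lies entirely in the pointwise comparison of the previous step.
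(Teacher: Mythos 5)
Your argument is correct, and its core coincides with the paper's: you reduce to a single $j$ by separability, and your pointwise comparison $R(\delta)\le R_t(\delta)$ — dropping the terms with $s<t$ and then using the ordering \eqref{eq:TechA:order_delta} to get $\sum_{s\ge t}\bigl(\phi(b_{s+1})-\phi(b_s)\bigr)\delta_s\ \ge\ \bigl(\phi(b_{\numBreakpoints})-\phi(b_t)\bigr)\delta_t$ — is exactly the paper's chain of inequalities showing that \eqref{eq:dominatinh_ineq} implies $\ff\le\phi(b_{t^*})+\bigl(\phi(b_{\numBreakpoints})-\phi(b_{t^*})\bigr)(1-\delta_{t^*})$, i.e.\ each member of \eqref{eq:TechA:fbound} with a sufficient big-$M$. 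Where you genuinely differ is the strictness step: the paper picks a particular \emph{integer} point and argues that the family \eqref{eq:TechA:fbound} becomes vacuous there while the aggregated inequality still bites (an argument that is terse and leans on the inherent bound $\ff\le 1$), whereas you separate with the constant \emph{fractional} point $\delta\equiv\theta\in(0,1)$, on which the aggregated bound is $(1-\theta)\phi(b_{\numBreakpoints})+\theta\phi(b_1)$ while every member of the family is slack by $M(1-\theta)$. Your witness is the standard LP-relaxation notion of strict domination and is arguably cleaner; note only that it requires $M>\phi(b_{\numBreakpoints})-\phi(b_1)$ strictly (otherwise the gap you squeeze $\ff$ into can collapse), which does hold for the models in the paper (e.g.\ $M=1$ with $\phi$ a CDF). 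Your telescoping identity at monotone integer points, showing both formulations impose the identical bound $\phi(b_{t^*})$ there, is a useful sanity check for validity that the paper leaves implicit.
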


\begin{proof}
Given any particular breakpoint index $\overline{k} \in \breakpointsIter$, \eqref{tech:Step:set:symmetry} implies that $\delta_{k} \geq \delta_{\overline{k}}$ for $k > \overline{k}$. Thus \eqref{eq:dominatinh_ineq} reduces to:
    \begin{align*}
    \ff  &\LEQ  \brObj(b_{\numBreakpoints}) - \sum_{k=1 }^{\numBreakpoints-1}\big(\brObj(b_{k+1}) - \brObj(b_k)\big) \deltaVar_k 
        \LEQ \brObj(b_{\numBreakpoints}) - \sum_{k=\overline{k} }^{\numBreakpoints-1}\big(\brObj(b_{k+1}) - \brObj(b_k)\big) \deltaVar_{\overline{k}} \\
        &\EQ  \brObj(b_{\overline{k}}) + \brObj(b_\numBreakpoints) - \brObj(b_{\overline{k}}) - \left(\brObj(b_\numBreakpoints) - \brObj(b_{\overline{k}})\right) \deltaVar_{\overline{k}} \\
        &\LEQ  \brObj(b_{\overline{k}}) + \big(\brObj(b_{\numBreakpoints}) - \brObj(b_{\overline{k}})\big) (1-\deltaVar_{\overline{k}})  
        \LEQ  \brObj(b_{\overline{k}}) + \bigM_{k} (1-\deltaVar_{\overline{k}})
    \end{align*}
because $\brObj(b_{\numBreakpoints}) - \brObj(b_{k})$ is the minimum valid value for $\bigM_{k}$ in \eqref{tech:Step:set:objective}.

Next, we demonstrate strict domination: assume, without loss of generality, that the approximation error is better than $\tfrac{1}{2}$ and let $\overline{k}$ be the smallest index such that $\brObj(b_{\overline{k}}) + \tfrac{1}{2} \geq \brObj(b_{\numBreakpoints})$. Suppose that $\deltaVar_{k} = 0$ for all $k < \overline{k}$ and $\deltaVar_{k} = 1$ for all $k \geq \overline{k}$. Then Model~\ref{labeling:Step} is vacuous for all $k\in \breakpointsIter$. But, \eqref{eq:dominatinh_ineq} produces a nontrivial bound on $\ff_j$.
\end{proof}

By replacing \eqref{tech:Step:set:objective} and adding  inequality ~\eqref{eq:dominatinh_ineq} to the description of $\lowhat{\step}_{\{b_k\}}$, we tighten the formulation, which in theory should improve computational results.  However, in practice, we did not see significant computational gains, perhaps because this inequality is rather dense.

In the context of mixed integer formulations for piecewise linear functions, this inequality relates to the \emph{incremental model}, which has strong tightness properties. See~\cite{vielma2010} for results on its theoretical properties and discussion therein of prior references.

\section{Logarithmic Size Approximations for Single Ratio Objectives} 
\label{sec:LogSize-single-ratio}

Models \ref{labeling:PWL} and \ref{labeling:Step} both have a binary variable for every linear segment of the approximation; Model \ref{labeling:PWL}'s binary variables are embedded within the typical SOS2 implementation. However, there exist embedding techniques for which the number of binary variables is \textit{logarithmic} in the number of pieces. In this section, we describe two such techniques and apply them to create compact approximations of Model \ref{model:BR-Redistricting}. 

Instead of labeling each linear segment with a single binary variable as in the prior approaches, the forthcoming techniques will use binary \textit{vectors} to label each segment. It is typically favorable to choose this labeling according to the (reflective) Gray code. 

\subsubsection*{Gray Code}

Gray code is a binary encoding of integers with the property that any pair of successive codewords differs in exactly one bit. While this construction is well known, we provide a formal definition for completeness, as it arises naturally in Approximation~\ref{tech:BNFull}.

Following~\cite{Beach2020-compact}, we focus on reflective Gray code and introduce convenient notation for working with it. Let each integer be represented as a binary vector in $\{0,1\}^*$. The standard binary representation using $\numBreakpoints$ bits maps an integer $q \in \{0, \ldots, 2^\numBreakpoints - 1\}$ to a vector $\alphabm^q \in \{0,1\}^\numBreakpoints$ such that
\begin{equation}
q = \sum_{k = 1}^{\numBreakpoints} 2^{\numBreakpoints-k} \alpha^q_k.
\label{eq:bindef}
\end{equation}

In contrast, Gray code encoding intentionally ensures that each adjacent integer differs by exactly one bit. The $\numBreakpoints$-bit reflective Gray code $\omegabm^q \in \{0,1\}^\numBreakpoints$ corresponding to $q$ is defined recursively as
\begin{equation}\label{eq:gray_code_defn}
\omega^q_1 \DEF \alpha^q_1, \qquad
\omega^q_k \DEF (\alpha^q_k + \alpha^q_{k-1}) \bmod 2 \quad \text{for } k = 2, \dots, \numBreakpoints,
\end{equation}
where $\bmod 2$ denotes addition modulo 2 (i.e., bitwise XOR).

This recurrence leads to a simple inverse transformation:
\begin{equation}
\label{eq:alternative-definition}
\alpha^q_k \EQ \left( \sum_{j=1}^k \omega^q_j \right) \bmod 2.
\end{equation}
Hence, there is a bijection $\mathscr{Q} \colon \{0,1\}^\numBreakpoints \to \{0, 1, \dots, 2^\numBreakpoints - 1\}$ that maps a Gray code vector $\deltabm \in \{0,1\}^\numBreakpoints$ to its corresponding integer:
\begin{equation}\label{Gray2Int}
\mathscr{Q}(\deltabm) = \sum_{k=1}^{\numBreakpoints} 2^{\numBreakpoints-k} \left( \sum_{t=1}^k \delta_t \right) \bmod 2.
\end{equation}

\subsection{Logarithmic Embedding (LogE)}
Inspired by the work of Vielma et al.~\cite{Vielma2009}, our approach in this section builds on the LogE method.  This method describes a formulation for a multi-dimensional piecewise linear function.   As in Approximation \ref{tech:PWL}, we embed this in the $(\yy, \zz)$ space.  In this method, the domain is covered by a sequence of \emph{covering polyhedra} $\DomTwo_{\boldsymbol{\omega}}$, indexed by binary codes $\boldsymbol{\omega}$.  Although many choices exist for this formulation, by selecting Gray codes for the indices and sequencing these polyhedra in accordance with the corresponding integers, \cite{Vielma2009} shows that the corresponding integer programming formulation is strong. 

Recall that Approximations \ref{tech:PWL} and \ref{tech:Step} both use an increasing sequence of breakpoints $\{b_k\}$ in the ratio $\rr$ of $\yy$ to $\zz$. These breakpoints are convenient in constructing our polyhedra $\DomTwo_{\boldsymbol\omega}$.

We chose our covering polyhedra to be \emph{polygonal annular sectors}.  We identify two radii $\check{s}$ and $\hat{s}$ such that $\DomTwo\subseteq\left\{(\yy,\zz) \MID \check{s}^2\leq\yy^2+\zz^2\leq\hat{s}^2\right\}$. Each cover polyhedron $\DomTwo_{\boldsymbol\omega}$ is then the convex hull of the four points with radius $\check{s}$ or $\hat{s}$ and ratio $\tfrac{\yy}{\zz} = b_{\bar k}$ or $ \tfrac{\yy}{\zz} = b_{\bar k -1}$. Choosing $\boldsymbol{\omega}$ such that $Q(\boldsymbol{\omega}) = \bar k$, we can write $\DomTwo_{\boldsymbol{\omega}}$  in Cartesian coordinates
\begin{equation}
\label{eq:covering}
\DomTwo_{\boldsymbol\omega} \EQ \text{conv}\left\{
\sqrt{\check{s}}\,\mathbf{d}^{\bar{k}-1},\,
\sqrt{\hat{s}}\,\mathbf{d}^{\bar{k}-1},\,
\sqrt{\check{s}}\,\mathbf{d}^{\bar{k}},\,
\sqrt{\hat{s}}\,\mathbf{d}^{\bar{k}}
\right\},
\quad\text{where}\quad
\mathbf{d}^{k} \DEF \frac{1}{\sqrt{1 + b_k^2}}\,\left(  b_k, \, 1 \right).
\end{equation}

In the two-dimensional space of $\DomTwo$, each breakpoint $b_k$ appears as a ray through the origin rather than as single points in $\DomOne$; see Figure \ref{fig:Graycodes:RegionBreakpoints}.

    \begin{figure}[!ht]
    \centering
        \begin{subfigure}{.48\textwidth}
\begin{center}
\begin{tikzpicture}

\colorlet{color1}{yellow!20!white}
\colorlet{color2}{red!20!white}
\colorlet{color3}{blue!20!white}

\draw[thick,fill=color1] ($(0,0) + (5:4)$) arc (5:45:4) -- ($(0,0) + (45:1)$) arc (45:5:1) -- cycle;
\draw[fill=color2] ($(0,0) + (5:1)$) -- ($(0,0) + (45:1.5)$) --  ($(0,0) + (45:4)$) --  ($(0,0) + (5:3.3)$) -- cycle;
\draw[thick,->] (0,0) -> ($(0,0) + (45:4.5)$) node[anchor=west, rotate=45]{$\frac{\yy}{\zz} = b_{4}$};
\draw[thick,->] (0,0) -> ($(0,0) + (5:4.5)$) node[anchor=west, rotate=5]{$\frac{\yy}{\zz} = b_{0}$};

\draw[->] (0,0) -> ($(0,0) + (15:4.2)$) node[anchor=west, rotate=15]{$\frac{\yy}{\zz} = b_{1}$};
\draw[->] (0,0) -> ($(0,0) + (25:4.2)$) node[anchor=west, rotate=25]{$\frac{\yy}{\zz} = b_{2}$};
\draw[->] (0,0) -> ($(0,0) + (35:4.2)$) node[anchor=west, rotate=35]{$\frac{\yy}{\zz} = b_{3}$};

\draw[thick] ($(0,0) + (5:4)$) arc (5:48:4) node[anchor=south, rotate=20]{$\hat{s}$};
\draw[thick] ($(0,0) + (5:1)$) arc (5:55:1) node[anchor=south, rotate=20]{$\check{s}$};

\draw[->] (-.2,0) -> (5.5,0) node[below right]{$\zz$};
\draw[->] (0,-.2) -> (0,4) node[above left]{$\yy$};

\end{tikzpicture}
\end{center}

        \caption{$\DomTwo$ (represented in pink) is approximated between two scaled arcs about the origin. Each breakpoint $b_{k}$, being a particular ratio of $\yy$ to $\zz$, is represented by a ray through the origin.}
        \label{fig:Graycodes:RegionBreakpoints}
        \end{subfigure}
        \hfill
        \begin{subfigure}{.48\textwidth}

\begin{center}
\begin{tikzpicture}

\colorlet{color1}{yellow!20!white}
\colorlet{color2}{red!20!white}
\colorlet{color3}{blue!20!white}

\draw[thick,fill=color1] ($(0,0) + (5:4)$) arc (5:45:4) -- ($(0,0) + (45:1)$) arc (45:5:1) -- cycle;
\draw[fill=color2] ($(0,0) + (5:1)$) -- ($(0,0) + (45:1.5)$) --  ($(0,0) + (45:4)$) --  ($(0,0) + (5:3.3)$) -- cycle;
\draw ($(0,0) + (5:.7)$) -> ($(0,0) + (5:4.3)$);
\draw ($(0,0) + (15:.7)$) -> ($(0,0) + (15:4.3)$);
\draw ($(0,0) + (25:.7)$) -> ($(0,0) + (25:4.3)$);
\draw ($(0,0) + (35:.7)$) -> ($(0,0) + (35:4.3)$);
\draw ($(0,0) + (45:.7)$) -> ($(0,0) + (45:4.3)$);

\draw ($(0,0) + (40:4.2)$) node[anchor=west, rotate=40]{$\boldsymbol\omega_{4} = \big[\negthinspace\begin{smallmatrix} 1 \\ 0 \end{smallmatrix}\negthinspace\big]$};
\draw ($(0,0) + (30:4.2)$) node[anchor=west, rotate=30]{$\boldsymbol\omega_{3} = \big[\negthinspace\begin{smallmatrix} 1 \\ 1 \end{smallmatrix}\negthinspace\big]$};
\draw ($(0,0) + (20:4.2)$) node[anchor=west, rotate=20]{$\boldsymbol\omega_{2} = \big[\negthinspace\begin{smallmatrix} 0 \\ 1 \end{smallmatrix}\negthinspace\big]$};
\draw ($(0,0) + (10:4.2)$) node[anchor=west, rotate=10]{$\boldsymbol\omega_{1} = \big[\negthinspace\begin{smallmatrix} 0 \\ 0 \end{smallmatrix}\negthinspace\big]$};

\filldraw[fill=blue!20, draw=black] 
    ($(0,0) + (25:1)$) -- 
    ($(0,0) + (25:4)$) -- 
    ($(0,0) + (35:4)$) -- 
    ($(0,0) + (35:1)$) -- 
    cycle;
\filldraw[fill=blue!20, draw=black] 
    ($(0,0) + (25:1.13)$) -- 
    ($(0,0) + (25:3.4)$) -- 
    ($(0,0) + (35:3.62)$) -- 
    ($(0,0) + (35:1.27)$) -- 
    cycle;

\coordinate (a) at ($(0,0) + (28:2.7)$);
\fill[blue, rotate=0,scale=0.65] (a) ++(-4pt,0) -- ++(4pt,4pt) -- ++(4pt,-4pt) -- ++(-4pt,-4pt) -- cycle;
\draw ($(0,0) + (20:2.2)$) node[anchor=west, rotate=20, scale=1]{$(\overline{\zz},\overline{\yy})$};
\fill[radius=2pt] ($(0,0) + (25:1)$) circle;
\fill[radius=2pt] ($(0,0) + (25:4)$) circle;
\fill[radius=2pt] ($(0,0) + (35:1)$) circle;
\fill[radius=2pt] ($(0,0) + (35:4)$) circle;

\draw[->] (-.2,0) -> (5.5,0) node[below right]{$\zz$};
\draw[->] (0,-.2) -> (0,4) node[above left]{$\yy$};

\end{tikzpicture}
\end{center}

        \caption{Any point $(\overline{\zz},\overline{\yy})$ can be expressed as a convex combination of the four vertices of a single sector. Each sector is assigned a binary code $\boldsymbol\omega_k$ in reflective Gray code order.}
        \label{fig:Graycodes:EncodingCombibnation}
        \end{subfigure}
    \caption{An example logarithmic embedding. Four segments are encoded with only two binary variables.}
    \label{fig:Graycodes}
    \end{figure}

    \begin{technique}{Logarithmic Embedding Approximation}{LogE}    
    Let $\Omega\subseteq\{0,1\}^\numBreakpoints$ be a set of $\numBreakpoints$-digit binary codes and let each code $\boldsymbol\omega$ correspond to a single, bounded polyhedron $\DomTwo_{\boldsymbol\omega}$. Suppose that, together, these polyhedra form a pairwise interior-disjoint cover of $\DomTwo$. Let $V = \bigcup_{\boldsymbol\omega \in \Omega} \mathrm{ext}(\DomTwo_{\boldsymbol\omega})$ and define the following set: 
        \begin{subequations}\label{tech:LogE:set}
        \vspace{1em}\\$\LogE_V(\psi) \DEF$\vspace{-1em}\\
        \begin{SetArray}
            {~\hfill~}
            {(\yy,\zz,\ff,\deltaVec,\lambdaVec)\in\DomTwo\times\Reals
            \times\Omega
            \times[0,1]^{|V|}}
            \ff &= \textstyle\sum_{\mathbf v \in V}  \lambdaVar_{\mathbf v} \psi(\mathbf v)  \label{tech:LogE:set:Obj}\\
            (\yy,\zz)  &=  \textstyle\sum_{\mathbf v \in V}  \lambdaVar_{\mathbf v} \mathbf v \label{tech:LogE:set:Comb} \\
            1  &=  \textstyle\sum_{\mathbf v \in V} \lambdaVar_{\mathbf v} \label{tech:LogE:set:Conv} \\
            1-\deltaVar_k  &\geq  \textstyle\sum_{\mathbf v \in V\vert^k_0}  \lambdaVar_{\mathbf v}  
                &\forall\ k\in\breakpointsIter \label{tech:LogE:set:SOS1} \\ 
            \deltaVar_k  &\geq  \textstyle\sum_{\mathbf v \in V\vert^k_1} \lambdaVar_{\mathbf v}
                &\forall\ k\in\breakpointsIter \label{tech:LogE:set:SOS2}
        \end{SetArray}
        \end{subequations}
    where
        \begin{equation*}\label{tech:LogE:V0k}
        V\vert^k_p  =  \left\{\mathbf v \in V  \MID \omega_{k} = p \text{ if } \mathbf v \in \mathrm{ext}(\DomTwo_{\boldsymbol\omega})\right\}.
        \end{equation*}
    \end{technique}

Notice that Approximation \ref{tech:LogE} has one spare degree of freedom; that is, there are five variables $(\yy,\zz,\ff,\deltaVec,\lambdaVec)$ but only four equations \eqref{tech:LogE:set:Obj}-\eqref{tech:LogE:set:Conv}. This means that $\lambdaVec$, and therefore $\ff$, may take a range of values for any given $\yy,\ \zz,$ and $\deltaVec$. See Figure \ref{fig:LogEComparison}. Despite this, Approximation \ref{tech:LogE} remains a competitive technique for embedding rational objectives. 
    
Aside from the logarithmic relationship that Approximation \ref{tech:LogE} maintains between the number of binary variables and the number of breakpoints, the changeover to Cartesian coordinates naturally bypasses the rational term which caused problems in the piecewise linear Approximation \ref{tech:PWL} while performing at least as well as the stepwise Approximation \ref{tech:Step}. This is formalized in Proposition \ref{tech:LogE:Proposition}.

    \begin{proposition}\label{tech:LogE:Proposition}
    Given a finite, increasing sequence of breakpoints $\{b_k\}_{k\in\left[\!\left[|\Omega|\right]\!\right]}$ such that $\DomOne\subseteq\left[b_0,b_{|\Omega|}\right)$, if the cover polyhedra $\DomTwo_{\boldsymbol\omega}$ are constructed (as portrayed in Figure \ref{fig:Graycodes}) such that
        \begin{equation}\label{tech:LogE:Condition}
        \DomTwo \subseteq \bigcup_{\boldsymbol\omega\in\Omega}\DomTwo_{\boldsymbol\omega}
        \qquad\textrm{and}\qquad
        \left\{
        \frac{y}{z} \MID (y,z)\in\mathrm{ext}(\DomTwo_{\boldsymbol\omega})
        \right\} =
        \left\lbrace b_{\mathcal{Q}\left(\boldsymbol\omega\right)-1},  b_{\mathcal{Q}\left(\boldsymbol\omega\right)} \right\rbrace
        \end{equation}
     holds for each binary code $\boldsymbol\omega\in\Omega$, then we have
        \begin{equation}\label{tech:LogE:Containment}
        \proj_{\left(r=\frac{\yy}{\zz},\,\ff\right)}\left(\LogE_V(\psi)\right) \SUBEQ
        \step_{\{b_k\}}(\brObj).
        \end{equation}
    \end{proposition}

    \begin{proof}
    Constraints \eqref{tech:LogE:set:Obj}, \eqref{tech:LogE:set:Comb}, and \eqref{tech:LogE:set:Conv} translate the point $(\yy,\zz)$ into $\ff$ by way of a convex combination of the vertices $V$. Rather than using traditional SOS2 constraints, \eqref{tech:LogE:set:SOS1} and \eqref{tech:LogE:set:SOS2} ensure that all of the active vertices belong to a single cover polyhedron $\DomTwo_{\boldsymbol\omega}$. We begin by demonstrating this effect.
    
    Let $\mathbf{v}'$ and $\mathbf{v}^\dagger$ be extreme points in $V$ such that $\overline\lambdaVar_{\mathbf{v}'}$ and $\overline\lambdaVar_{\mathbf{v}^\dagger}$ both take positive value under some solution $\left(\overline{\yy},\overline{\zz},\overline{\ff},\overline{\deltaVec},\overline{\lambdaVec}\right)\in\LogE_V(\psi)$. Suppose, for the sake of contradiction, that there exists an index $\overline{k}\in\breakpointsIter$ for which, without loss of generality,    $\mathbf{v}'\in V\vert^{\overline{k}}_0$ while $\mathbf{v}^\dagger\notin V\vert^{\overline{k}}_0$. Since $\omega_{\overline k}$ is binary, it must be that $V\vert^{\overline{k}}_0\cup V\vert^{\overline{k}}_1 = V$ and therefore $\mathbf{v}^\dagger\in V\vert^{\overline{k}}_1$. Now, consider the $\overline{k}$ instance of constraints \eqref{tech:LogE:set:SOS1} and \eqref{tech:LogE:set:SOS2}:
        \begin{align*}
        1-\overline\deltaVar_{\overline k}  
            &\GEQ  \textstyle\sum_{\mathbf v \in V\vert^{\overline{k}}_0}  \overline\lambdaVar_{\mathbf v} \GEQ \overline\lambdaVar_{\mathbf v'} \ > \ 0 \\
        \overline\deltaVar_{\overline k}  &\GEQ  \textstyle\sum_{\mathbf v \in V\vert^{\overline{k}}_1} \overline\lambdaVar_{\mathbf v} \GEQ \overline\lambdaVar_{\mathbf v^\dagger} \ > \ 0.
        \end{align*}
    However, $\overline\deltaVar_{\overline k} \in \{0,1\}$, so these inequalities are contradictory. This means that, if $\overline\lambdaVar_{\mathbf{v}'}$ and $\overline\lambdaVar_{\mathbf{v}^\dagger}$ are both positive, then $\mathbf{v}'$ and $\mathbf{v}^\dagger$ must both belong to the same set $\big(V\vert^k_0\,\textrm{ or}\ V\vert^k_1\big)$ for each index $k\in\breakpointsIter$ and therefore to the same cover polyhedron.

    We now turn our attention to the result \eqref{tech:LogE:Containment}. Recall that the function $\mathcal{Q}$ converts a binary Gray code into a unique integer; thus $\mathcal{Q}(\boldsymbol\omega)$ is an integer. The condition expressed by \eqref{tech:LogE:Condition} requires that each cover polyhedron $\DomTwo_{\boldsymbol\omega}$ corresponds to a unique segment of the piecewise-linear or stepwise approximations: the specific segment between the breakpoints $b_{\mathcal{Q}(\boldsymbol\omega)-1}$ and $b_{\mathcal{Q}(\boldsymbol\omega)}$. 

    Given a point $\left(\overline{\yy},\overline{\zz},\overline{\ff},\overline{\deltaVec},\overline{\lambdaVec}\right)\in\LogE_V(\psi)$ and condition \eqref{tech:LogE:Condition}, constraints \eqref{tech:LogE:set:Comb}-\eqref{tech:LogE:set:SOS2} maintain
        $$
        \frac{\overline\yy}{\overline\zz}\in\left[ b_{\mathcal{Q}\left(\overline\deltaVec\right)-1},b_{\mathcal{Q}\left(\overline\deltaVec\right)} \right]
        $$
    while constraint \eqref{tech:LogE:set:Obj} tells us that
        $$
        \overline\ff \EQ 
            \textstyle\sum_{\mathbf v \in V}  \overline\lambdaVar_{\mathbf v} \psi(\mathbf v) \LEQ
            \textstyle\sum_{\mathbf v \in V} \overline\lambdaVar_{\mathbf v} \brObj\!\left(b_{\mathcal{Q}\left(\overline{\deltaVec}\right)}\right) \EQ \brObj\!\left(b_{\mathcal{Q}\left(\overline{\deltaVec}\right)}\right)
        $$
    so $\left(\frac{\overline\yy}{\overline\zz},\overline\ff\right)\in\step_{\{b_k\}}(\brObj)$.
    \end{proof}

    \begin{corollary}
        The LogE formulation is an $\varepsilon$-approximation of $\gra(\psi)$ with
        $$
        \varepsilon = \max_{k=1, \dots, |\Omega|} \phi(b_k) - \phi(b_{k-1}).
        $$
        As $|\Omega| = 2^\ell$, is follows that $\varepsilon = O(\tfrac{1}{2^\ell})$.
    \end{corollary}
    \begin{proof}
        [Proof sketch]
        Proposition~\ref{tech:LogE:Proposition} shows that the stepwise formulation is a relaxation of the LogE formulation.  A similar calculation shows that if you take a stepwise formulation that takes the minimum value on each interval, then the LogE is a relaxation of this minimum value stepwise formulation.  Thus, on each segment, the LogE error is at most that of a stepwise formulation. 
    \end{proof}
    
In the above corollary, we give a rough estimate of the error of the LogE approximation, but in fact we expect it to be much stronger. One connection we draw is to the piecewise linear approximation.
  We show that the LogE approximation overestimates the piecewise linear approximation when projected into the ratio space.  This may seem disadvantageous, but recall that the piecewise linear approximation also must handle the nonlinear equation of $r = \tfrac{\yy}{\zz}$, which the LogE does not.

    \begin{figure}[ht]
        \centering

\begin{tikzpicture}

\draw[->] (0.66,0.66) -> (6.3,0.66) node[below right]{$r=\frac{\yy}{\zz}$};
\draw[->] (0.98,0.45) -> (0.98,6) node[above left]{$\ff$};

\begin{axis}[
width=0.8\linewidth,
height=0.55\linewidth,
xticklabel=\empty,
yticklabel=\empty,
tick style={draw=none},
legend entries={$\gra(\brObj)$,$\pwlg(\brObj)$,$\step(\brObj)$,$\min\LogE(\brObj)$,$\max\LogE(\brObj)$},
legend pos=south east,
]
\addplot [smooth, line width=3pt, orange!50, line cap=round]
table {%
0.	0.00583449
0.003	0.00625026
0.006	0.00669205
0.009	0.00716122
0.012	0.00765917
0.015	0.00818735
0.018	0.00874727
0.021	0.00934048
0.024	0.00996859
0.027	0.0106333
0.03	0.0113362
0.033	0.0120792
0.036	0.012864
0.039	0.0136926
0.042	0.0145668
0.045	0.0154886
0.048	0.0164601
0.051	0.0174832
0.054	0.0185602
0.057	0.0196931
0.06	0.0208842
0.063	0.0221358
0.066	0.02345
0.069	0.0248293
0.072	0.026276
0.075	0.0277925
0.078	0.0293813
0.081	0.0310447
0.084	0.0327853
0.087	0.0346056
0.09	0.0365081
0.093	0.0384953
0.096	0.0405698
0.099	0.0427341
0.102	0.0449909
0.105	0.0473426
0.108	0.0497918
0.111	0.052341
0.114	0.0549928
0.117	0.0577496
0.12	0.060614
0.123	0.0635883
0.126	0.0666749
0.129	0.0698763
0.132	0.0731947
0.135	0.0766323
0.138	0.0801913
0.141	0.0838739
0.144	0.087682
0.147	0.0916177
0.15	0.0956828
0.153	0.0998791
0.156	0.104208
0.159	0.108672
0.162	0.113271
0.165	0.118008
0.168	0.122883
0.171	0.127897
0.174	0.133052
0.177	0.138349
0.18	0.143787
0.183	0.149367
0.186	0.155091
0.189	0.160957
0.192	0.166966
0.195	0.173118
0.198	0.179413
0.201	0.185849
0.204	0.192427
0.207	0.199145
0.21	0.206003
0.213	0.212998
0.216	0.220131
0.219	0.227398
0.222	0.234799
0.225	0.242331
0.228	0.249992
0.231	0.257779
0.234	0.265691
0.237	0.273724
0.24	0.281875
0.243	0.290142
0.246	0.29852
0.249	0.307007
0.252	0.315598
0.255	0.32429
0.258	0.333079
0.261	0.341961
0.264	0.350931
0.267	0.359985
0.27	0.369118
0.273	0.378326
0.276	0.387603
0.279	0.396945
0.282	0.406346
0.285	0.415802
0.288	0.425307
0.291	0.434855
0.294	0.444441
0.297	0.45406
0.3	0.463707
0.303	0.473374
0.306	0.483057
0.309	0.49275
0.312	0.502448
0.315	0.512144
0.318	0.521833
0.321	0.531509
0.324	0.541166
0.327	0.550799
0.33	0.560403
0.333	0.56997
0.336	0.579497
0.339	0.588978
0.342	0.598407
0.345	0.607779
0.348	0.617089
0.351	0.626332
0.354	0.635502
0.357	0.644596
0.36	0.653608
0.363	0.662534
0.366	0.67137
0.369	0.68011
0.372	0.688752
0.375	0.697291
0.378	0.705723
0.381	0.714046
0.384	0.722254
0.387	0.730346
0.39	0.738318
0.393	0.746168
0.396	0.753892
0.399	0.761488
0.402	0.768954
0.405	0.776288
0.408	0.783487
0.411	0.790551
0.414	0.797477
0.417	0.804264
0.42	0.810911
0.423	0.817418
0.426	0.823783
0.429	0.830006
0.432	0.836085
0.435	0.842023
0.438	0.847817
0.441	0.853468
0.444	0.858976
0.447	0.864343
0.45	0.869568
0.453	0.874652
0.456	0.879596
0.459	0.884401
0.462	0.889068
0.465	0.893599
0.468	0.897994
0.471	0.902256
0.474	0.906386
0.477	0.910385
0.48	0.914256
0.483	0.918001
0.486	0.921621
0.489	0.925118
0.492	0.928495
0.495	0.931754
0.498	0.934898
0.501	0.937927
0.504	0.940846
0.507	0.943655
0.51	0.946359
0.513	0.948959
0.516	0.951457
0.519	0.953857
0.522	0.95616
0.525	0.95837
0.528	0.960489
0.531	0.962519
0.534	0.964463
0.537	0.966323
0.54	0.968103
0.543	0.969805
0.546	0.97143
0.549	0.972982
};
\addplot [line width=1.5pt, green!50!black, line cap=round]
table {%
0.	0
0.11	0.05148
0.22	0.22985
0.33	0.560403
0.44	0.8516
0.55	0.973483
};
\addplot [line width=1.5pt, blue!50, line cap=round]
table {%
0. 0.05148
0.11 0.05148
};
\addplot [line width=1.5pt, blue!50, line cap=round, forget plot]
table {%
0.11 0.22985
0.22 0.22985
};
\addplot [line width=1.5pt, blue!50, line cap=round, forget plot]
table {%
0.22 0.560403
0.33 0.560403
};
\addplot [line width=1.5pt, blue!50, line cap=round, forget plot]
table {%
0.33 0.8516
0.44 0.8516
};
\addplot [line width=1.5pt, blue!50, line cap=round, forget plot]
table {%
0.44 0.973483
0.55 0.973483
};
\addplot [line width=3pt, red!40, line cap=round]
table {%
0.	0.00583449
0.003	0.0062102
0.006	0.0065859
0.009	0.00696159
0.012	0.00733724
0.015	0.00771285
0.018	0.00808842
0.021	0.00846392
0.024	0.00883935
0.027	0.0092147
0.03	0.00958995
0.033	0.00996511
0.036	0.0103402
0.039	0.0107151
0.042	0.0110899
0.045	0.0114645
0.048	0.011839
0.051	0.0122134
0.054	0.0125875
0.057	0.0129615
0.06	0.0133353
0.063	0.0137089
0.066	0.0140823
0.069	0.0144555
0.072	0.0148284
0.075	0.0152011
0.078	0.0155735
0.081	0.0159457
0.084	0.0192268
0.087	0.0229556
0.09	0.0266827
0.093	0.030408
0.096	0.0341315
0.099	0.037853
0.102	0.0415724
0.105	0.0452896
0.108	0.0490046
0.111	0.051975
0.114	0.0534596
0.117	0.054943
0.12	0.0564255
0.123	0.0579068
0.126	0.059387
0.129	0.0608661
0.132	0.0623439
0.135	0.0638205
0.138	0.0652958
0.141	0.0667698
0.144	0.0682424
0.147	0.0697137
0.15	0.0711835
0.153	0.0726518
0.156	0.0741187
0.159	0.075584
0.162	0.0770477
0.165	0.0785099
0.168	0.0799704
0.171	0.0814292
0.174	0.0828864
0.177	0.0843417
0.18	0.0857954
0.183	0.0872472
0.186	0.0886971
0.189	0.0901452
0.192	0.0952767
0.195	0.109762
0.198	0.124233
0.201	0.138688
0.204	0.153127
0.207	0.167549
0.21	0.181955
0.213	0.196344
0.216	0.210716
0.219	0.22507
0.222	0.231704
0.225	0.234481
0.228	0.237255
0.231	0.240025
0.234	0.242791
0.237	0.245554
0.24	0.248313
0.243	0.251067
0.246	0.253818
0.249	0.256565
0.252	0.259308
0.255	0.262046
0.258	0.264781
0.261	0.267511
0.264	0.270237
0.267	0.272958
0.27	0.275676
0.273	0.278388
0.276	0.281096
0.279	0.2838
0.282	0.286499
0.285	0.289193
0.288	0.291883
0.291	0.294568
0.294	0.297248
0.297	0.299923
0.3	0.302593
0.303	0.321884
0.306	0.348563
0.309	0.375199
0.312	0.401792
0.315	0.42834
0.318	0.454844
0.321	0.481302
0.324	0.507715
0.327	0.534082
0.33	0.560403
0.333	0.562872
0.336	0.565337
0.339	0.567798
0.342	0.570254
0.345	0.572706
0.348	0.575153
0.351	0.577595
0.354	0.580033
0.357	0.582465
0.36	0.584893
0.363	0.587317
0.366	0.589735
0.369	0.592149
0.372	0.594557
0.375	0.596961
0.378	0.59936
0.381	0.601753
0.384	0.604142
0.387	0.606525
0.39	0.608904
0.393	0.611277
0.396	0.613645
0.399	0.616008
0.402	0.618365
0.405	0.620718
0.408	0.623065
0.411	0.62728
0.414	0.650698
0.417	0.674068
0.42	0.697389
0.423	0.720661
0.426	0.743885
0.429	0.767058
0.432	0.790182
0.435	0.813256
0.438	0.836279
0.441	0.851947
0.444	0.852987
0.447	0.854024
0.45	0.855059
0.453	0.856092
0.456	0.857123
0.459	0.858151
0.462	0.859177
0.465	0.8602
0.468	0.861221
0.471	0.862239
0.474	0.863255
0.477	0.864269
0.48	0.86528
0.483	0.866289
0.486	0.867295
0.489	0.868299
0.492	0.869301
0.495	0.8703
0.498	0.871296
0.501	0.87229
0.504	0.873282
0.507	0.87427
0.51	0.875257
0.513	0.876241
0.516	0.877222
0.519	0.878201
0.522	0.88334
0.525	0.893097
0.528	0.902831
0.531	0.912541
0.534	0.922227
0.537	0.93189
0.54	0.941528
0.543	0.951143
0.546	0.960733
0.549	0.9703
};
\addplot [line width=3pt, red!70, line cap=round]
table {%
0.	0.00583449
0.003	0.00959163
0.006	0.0133487
0.009	0.0171055
0.012	0.020862
0.015	0.0246182
0.018	0.0283738
0.021	0.0321288
0.024	0.0358831
0.027	0.0396366
0.03	0.0415254
0.033	0.0418996
0.036	0.0422739
0.039	0.0426481
0.042	0.0430223
0.045	0.0433965
0.048	0.0437706
0.051	0.0441447
0.054	0.0445188
0.057	0.0448928
0.06	0.0452668
0.063	0.0456406
0.066	0.0460144
0.069	0.0463881
0.072	0.0467617
0.075	0.0471351
0.078	0.0475084
0.081	0.0478816
0.084	0.0482547
0.087	0.0486275
0.09	0.0490003
0.093	0.0493728
0.096	0.0497451
0.099	0.0501173
0.102	0.0504892
0.105	0.0508609
0.108	0.0512324
0.111	0.0564306
0.114	0.0712757
0.117	0.0861106
0.12	0.100935
0.123	0.115748
0.126	0.13055
0.129	0.145341
0.132	0.160119
0.135	0.174885
0.138	0.189638
0.141	0.191566
0.144	0.193036
0.147	0.194504
0.15	0.195971
0.153	0.197438
0.156	0.198903
0.159	0.200367
0.162	0.20183
0.165	0.203292
0.168	0.204753
0.171	0.206213
0.174	0.207671
0.177	0.209128
0.18	0.210584
0.183	0.212038
0.186	0.213491
0.189	0.214943
0.192	0.216393
0.195	0.217842
0.198	0.219289
0.201	0.220734
0.204	0.222178
0.207	0.22362
0.21	0.225061
0.213	0.2265
0.216	0.227937
0.219	0.229372
0.222	0.248386
0.225	0.276159
0.228	0.303896
0.231	0.331597
0.234	0.35926
0.237	0.386886
0.24	0.414473
0.243	0.442021
0.246	0.469529
0.249	0.48785
0.252	0.490586
0.255	0.493319
0.258	0.496049
0.261	0.498776
0.264	0.501498
0.267	0.504218
0.27	0.506933
0.273	0.509645
0.276	0.512354
0.279	0.515058
0.282	0.517759
0.285	0.520455
0.288	0.523148
0.291	0.525837
0.294	0.528522
0.297	0.531202
0.3	0.533879
0.303	0.536551
0.306	0.539219
0.309	0.541882
0.312	0.544541
0.315	0.547196
0.318	0.549847
0.321	0.552493
0.324	0.555134
0.327	0.557771
0.33	0.560403
0.333	0.585099
0.336	0.609751
0.339	0.634357
0.342	0.658919
0.345	0.683434
0.348	0.707904
0.351	0.732326
0.354	0.756702
0.357	0.781031
0.36	0.788651
0.363	0.791069
0.366	0.793484
0.369	0.795894
0.372	0.7983
0.375	0.800702
0.378	0.803099
0.381	0.805491
0.384	0.80788
0.387	0.810264
0.39	0.812643
0.393	0.815018
0.396	0.817388
0.399	0.819753
0.402	0.822114
0.405	0.82447
0.408	0.826821
0.411	0.829168
0.414	0.83151
0.417	0.833847
0.42	0.836179
0.423	0.838506
0.426	0.840828
0.429	0.843146
0.432	0.845458
0.435	0.847766
0.438	0.850068
0.441	0.855071
0.444	0.865469
0.447	0.875844
0.45	0.886195
0.453	0.896524
0.456	0.906828
0.459	0.917109
0.462	0.927366
0.465	0.937599
0.468	0.946512
0.471	0.947528
0.474	0.948543
0.477	0.949555
0.48	0.950565
0.483	0.951573
0.486	0.952579
0.489	0.953582
0.492	0.954584
0.495	0.955582
0.498	0.956579
0.501	0.957573
0.504	0.958565
0.507	0.959555
0.51	0.960543
0.513	0.961528
0.516	0.96251
0.519	0.963491
0.522	0.964469
0.525	0.965445
0.528	0.966418
0.531	0.967389
0.534	0.968358
0.537	0.969324
0.54	0.970288
0.543	0.971249
0.546	0.972208
0.549	0.973165
};

\end{axis}

\end{tikzpicture}

        \caption{Computed in Mathematica,  this plot shows the result of minimizing or maximizing $\ff$ over $\LogE(\psi)$ as projected onto $(r=\frac{\yy}{\zz},\ff)$. For a particular $(\yy,\zz)$, approximation \ref{tech:LogE} may give any value of $f$ between the red and pink lines over the ratio $\frac{\yy}{\zz}$. Importantly, this example has $\check{s} = 100$, $\hat{s}=1000$, and $\{b_k\}$ evenly distributed between $0$ and $0.55$. Each of the points evaluated has an identical distance from the origin:
        $\yy^2+\zz^2 = 300$.}
        \label{fig:LogEComparison}
    \end{figure}

 \begin{theorem}\label{tech:LogE:Conjecture}
 Given a finite, increasing sequence of breakpoints $\{b_k\}$, if the covering polyhedra are constructed as in \eqref{eq:covering} and such that  \eqref{tech:LogE:Condition} holds, then
        \begin{equation}\label{tech:LogE:Conjecture:EQ}
        \pwlg_{\{b_k\}}(\brObj) \SUBEQ
        \proj_{\left(r=\frac{\yy}{\zz},\,\ff\right)}\left(\LogE_V(\psi)\right).
        \end{equation}
    \end{theorem}
    Indeed, this containment is evident in Figure~\ref{fig:LogEComparison}. 
     The proof requires some lengthy calculations about the behavior of the LogE formulation in this context. 
 We prove this result in Appendix~\ref{appendix:proof-LogE}.  Note that this result might not hold true if we trim the covering polyhedra by intersecting with further domain restrictions.

    \begin{remark}
We also see from Figure~\ref{fig:LogEComparison} that the LogE can be an upper bound (and hence a relaxation of the graph) for $\psi$.  This depends on the derivative and second derivative of the function $\phi$ and the domain in which it is being studied.  Particularly, for our choice of $\phi$, if the inputs were very large ratios, then this bound may not be valid.  However, in our scenarios, we can actually guarantee that the LogE formulation provides a valid upper bound.
\end{remark}

    \begin{modelCustom}{1}{c}[LogE Approximation of BR-Redistricting]\label{labeling:LogE}
    We apply Approximation \ref{tech:LogE} to each district $j$ in Model \ref{model:BR-Redistricting} independently:
        \begin{equation*}
        \max_{(\x,\y,\z)\,\in\,\XYZFeasPoly_\textrm{B}}
        \left\{
        \sum_{j=1}^{\numDistricts}f_j
        \MID
        (\yy_j,\zz_j,\ff_j,\deltaVec^j,\lambdaVec^j)\in \LogE_{V_j}(\psi)
        \quad
        \forall\ j\in\{1,\dots,\numDistricts\}
        \right\}.
    \end{equation*}
    \end{modelCustom}

Given a set of binary codes $\Omega=\{0,1\}^\numBreakpoints$, we use scaled arcs to generate our cover polyhedra in a computationally efficient manner. It is possible to identify a finite, increasing sequence of breakpoints $\{b_k\}_{k\in[\![2^\numBreakpoints]\!]}$ along with inner and outer radii $\check{s}$ and $\hat{s}$ such that 
    $$
    \DomTwo \SUBEQ \left\{(z,y)\in\Reals^2_+ \MID \check{s} < z^2+y^2 < \hat{s}  \hquad\textrm{and}\hquad  b_0 \leq \frac{z}{y} \leq b_{2^\numBreakpoints}\right\}.
    $$
While it is possible to implement this technique with an under-full set of binary codes $(\Omega\subset\{0,1\}^\numBreakpoints)$, adding more codes (and more cover polyhedra) up to $\Omega=\{0,1\}^\numBreakpoints$ improves the approximation without requiring additional binary variables. Thus, we use a full set of binary codes for a given $\numBreakpoints$.

\subsection{Ben-Tal \& Nemirovski Stepwise Embedding}
Algorithm \ref{alg:BNFull} details a process for approximating the value of an increasing, single-ratio function (like our BR-Redistricting objective) at a given point in the domain. This scheme is heavily inspired by the Ben-Tal \& Nemirovski LP approximation of the second-order cone~\cite{Ben-Tal-Nemirovski}, an impressive result that enables the approximation of the second-order cone with exponentially many linear inequalities (upon projection to the space of the original variables). Further improvements to the approximation, using rational data of small sizes, can be found in~\cite{burak2021}. Our approach is related to that presented in~\cite{Dong-Luo-2018}, wherein binary variables are incorporated to gain additional insights. In particular, we use $\deltabm\in\{0,1\}^\numBreakpoints$ to keep track of the mirroring steps, which allows us to estimate the position of the initial point.

Define the rotation function $\mathrm{Rot}(\theta):\Reals\rightarrow[-1,1]^{2\times2}$:
    \begin{equation}\label{def:RotMat}
    \mathrm{Rot}(\theta)  \DEF  
    \begin{bmatrix}
    \cos(\theta) & \sin(\theta)\\
    -\sin(\theta) & \cos(\theta)
    \end{bmatrix}.
    \end{equation}

    \begin{algorithm}[ht!]
    \caption{Stepwise approximation of rational function value via rotation and mirroring}
    \label{alg:BNFull}
    \begin{algorithmic}[1]
    \State \textbf{Input:} an increasing and bounded function $\brObj: \Reals\rightarrow\Reals$, a point  $(\overline\zz,\overline\yy)\in\Reals^2_+$, an angle $\theta_0\in[0,\frac{\pi}{2})$ such that $\frac{\overline\yy}{\overline\zz}\leq\tan(\theta_0)$, and a number of iterations $\numBreakpoints\in\Z_+$
    \State\textbf{Output:} an approximation $f$ of $\brObj\big(\frac{\yy}{\zz}\big)$
    \item[]
    \State define $\xiVar = \overline\zz$ and $\etaVar = \overline\yy$
    \For{$k\in\breakpointsIter$}
        \State define $\theta_k = \frac{\theta_0}{2^k}$
        \State\hspace{-0.7em}\textbf{Rotation Step:} 
        \State 
            $
            \begin{bmatrix} \xiVar \\ \tilde \etaVar \end{bmatrix}  \gets  \mathrm{Rot}(\theta_k) \begin{bmatrix} \xiVar \\ \etaVar \end{bmatrix}
            $
            
        \State\hspace{-0.7em}\textbf{Mirroring Step:}
        \If{$\tilde\etaVar < 0$}
            \State $\etaVar \gets -\tilde\etaVar$ and define $\delta_k = 1$
        \Else
            \State $\etaVar \gets \tilde\etaVar$ and define $\delta_k = 0$
        \EndIf
    \EndFor
    \State set $q = \sum_{k=1}^\numBreakpoints2^{\numBreakpoints-k}\left(\sum_{t=1}^k\delta_t\right)\MOD2$
    \item[]
    \State\textbf{Return:} $f = \brObj\!\left(\tan\!\left(\frac{\theta_0}{2}-q\,\theta_\numBreakpoints\right)\!\right)$
    \end{algorithmic}
    \end{algorithm}

\begin{proposition}\label{prop:BN-graycode}
After Algorithm \ref{alg:BNFull} terminates, we have that $\tfrac{\etaVar}{\xiVar} \in \left[0, \tan(\theta_\numBreakpoints)\right]$. Furthermore, $q$ is the unique integer such that $\frac{\overline\yy}{\overline\zz} \in \left(\tan\!\left(\tfrac{\theta_0}{2} - (q+1)\,\theta_\numBreakpoints\right), \tan\!\left(\tfrac{\theta_0}{2} - q\,\theta_\numBreakpoints\right)\right]$.
\end{proposition}
\noindent A detailed proof of Proposition \ref{prop:BN-graycode} is given in Appendix \ref{appendix:gray_code}. See Figure \ref{fig:BNFull:Explain} for a visual explanation of this process.

    \begin{figure}[!hb]
    \centering
    
\begin{tikzpicture}
\usetikzlibrary{bending}
\usetikzlibrary{shapes.geometric}

\newcommand{\pgfgetlastpolar}[2]{
    \pgfgetlastxy{\tempx}{\tempy}
    \pgfmathsetmacro{#1}{atan2(\tempy,\tempx)}
    \pgfmathsetmacro{#2}{veclen(\tempx,\tempy)}
}

\colorlet{color1}{yellow!20!white}
\colorlet{color2}{red!20!white}
\colorlet{color3}{blue!20!white}

\newcommand*{\thetaOne}{22.5}
\newcommand*{\thetaTwo}{11.25}
\newcommand*{\thetaThree}{5.625}
\newcommand*{\thetaFour}{2.8125}

\path[fill=color2] (0:1) -- (45:1.5) -- (45:4.6) -- ($(45:4.6)+(3.4,0)$) -- (0:7.8) -- cycle; \draw (0:1) -- (45:1.5); \draw ($(45:4.6)+(3.4,0)$) -- (0:7.8);
\draw[thick,->] (0,0) -> (45:5.1) node[anchor=west, rotate=45]{$\frac{\yy}{\zz} = 1$};

\draw[->, dashed] (0,0) -> (\thetaOne:7.8) node[anchor=west, rotate=\thetaOne]{$\frac{\yy}{\zz} = \tan\!\left(\frac{\pi}{8}\right)$};
\draw[->, dashed] (0,0) -> (\thetaTwo:7.9) node[anchor=west, rotate=\thetaTwo]{$\frac{\yy}{\zz} = \tan\!\left(\frac{\pi}{16}\right)$};
\draw[->, dashed] (\thetaThree:6.6) -> (\thetaThree:8) node[anchor=west, rotate=\thetaThree] {$\frac{\yy}{\zz} = \tan\!\left(\frac{\pi}{32}\right)$}; \draw[dashed] (0,0) -- (\thetaThree:5.4);

\draw[dotted] (\thetaTwo+\thetaThree:1.5) -- (\thetaTwo+\thetaThree:7.15);
\draw[dotted] (\thetaOne+\thetaThree:1.55) -- (\thetaOne+\thetaThree:6.5);
\draw[dotted] (\thetaOne+\thetaTwo:1.6) -- (\thetaOne+\thetaTwo:5.5);
\draw[dotted] (\thetaOne+\thetaTwo+\thetaThree:1.65) -- (\thetaOne+\thetaTwo+\thetaThree:4.8);

\newcommand*{\etaZero}{26.5}
\node[circle, fill=black, minimum size=5pt] (a) at (\etaZero:6) {};

\node[circle, fill=black, minimum size=5pt] (b) at (\etaZero-\thetaOne:6) {};
\pgfgetlastpolar{\etaOne}{\myRho}
\draw[arrows = {->[bend]}] (a) to[out=-25,in=55] (b);

\node[circle, draw, minimum size=5pt] (c) at (\etaOne-\thetaTwo:6) {};
\pgfgetlastpolar{\etaTwo}{\myRho}
\pgfgetlastxy{\xTwo}{\yTwo}
\draw[arrows = {->[bend]}] (b) to[out=-45,in=45] (c);

\node[circle, fill=black, minimum size=5pt] (d) at  (\xTwo,-\yTwo) {};
\pgfgetlastpolar{\etaThree}{\myRho}
\pgfgetlastxy{\xThree}{\yThree}
\draw[arrows = {|->[bend]}] (c) to[out=150,in=-150] (d);

\node[star, fill, minimum size=6pt] (e) at (\etaThree-\thetaThree:6) {};
\draw[arrows = {->[bend]}] (d) to[out=-130,in=140] (e);

\node[label={[rotate=\etaZero-2] west:$(\overline{\zz},\overline{\yy})$}] at ($(a)+(0.05,-0.15)$) {};
\node[label={[rotate=\etaZero-\thetaOne+1] east:$1$}] at (b) {};
\node[label={[rotate=\etaOne-\thetaTwo] east:$\tilde2$}] at ($(c)+(0,-0.05)$) {};
\node[label={[rotate=-\etaOne+\thetaTwo] west:$2$}] at  ($(d)+(-0.03,0.15)$) {};
\node[label={[rotate=-\etaTwo-\thetaThree] south west:$3$}] at  ($(e)+(0.05,0.1)$) {};

\draw (-.2,0) -> (5.6,0);\draw[->] (6,0) -> (9.8,0) node[below right]{$\zz$};
\draw[->] (0,-1) -> (0,4.2) node[above left]{$\yy$};

\end{tikzpicture}
    
    \caption{A demonstration of Algorithm \ref{alg:BNFull}. The initial point $(\overline\zz,\overline\yy)$ goes through $3$ rotations with a mirroring at $k=2$ because $\tilde\etaVar_2 < 0$. This gives $\deltabm = (0,1,0)$ and $q = 3$. Therefore, by Proposition \ref{prop:BN-graycode}, we know that $\frac{\overline\yy}{\overline\zz}$ lies within $\left(\tan\!\left(\tfrac{\pi}{8}\right), \tan\!\left(\tfrac{\pi}{8}+\frac{\pi}{32}\right)\right]$.}
    \label{fig:BNFull:Explain}
    \end{figure}

If $f_q = \brObj\!\left(\tan\!\left(\frac{\pi}{4}-q\,\theta_\numBreakpoints\right)\right)$ is precomputed for $q\in[\![2^\numBreakpoints]\!]$, then Algorithm \ref{alg:BNFull} can be performed within a Linear Program to bound Model \ref{model:BR-Redistricting}. This approach becomes particularly elegant upon noticing that line thirteen of Algorithm \ref{alg:BNFull} is identical to \eqref{Gray2Int}; $q$ is being set to the integer representation of the Gray code $\boldsymbol\delta$.

\begin{technique}{Ben-Tal \& Nemirovski Stepwise Relaxation}{BNStep}
Identify an initial angle $\theta_0\in[0,\frac{\pi}{2}]$ such that $\tan(\theta_0) \geq \sup(\DomOne)$, let $\theta_k = \frac{\theta_0}{2^k}$ for each rotation step $k\in[\![\numBreakpoints]\!]$, and define the following set to perform the rotation and mirroring steps:
    \begin{subequations}\label{tech:BNRot:set}
    \vspace{1em}\\$\BNRot_{\{\theta_k\}}(\psi) \DEF$ \vspace{-1em}\\
    \begin{SetArray}
    {~\hfill~}
    {(\yy,\zz,\deltaVec)\in\Reals^2_+\times\Reals\times\{0,1\}^\numBreakpoints}
    \begin{bmatrix} \xiVar_0 \\ \etaVar_0 \end{bmatrix} &= \begin{bmatrix} \zz \\ \yy \end{bmatrix} \\
    \begin{bmatrix} \xiVar_k \\ \tilde{\etaVar}_k \end{bmatrix} 
        &= \mathrm{Rot}(\theta_k) \begin{bmatrix} \xiVar_{k-1} \\ \etaVar_{k-1} \end{bmatrix}
        &\forall\ k \in \breakpointsIter \label{BNRot:Rotate}\\
    \tilde{\etaVar}_k &\leq \etaVar_k \leq \tilde{\etaVar}_k + \bigM_k \deltaVar_k
        &\forall\ k \in \breakpointsIter \label{BNRot:ABS1} \\
    - \tilde{\etaVar}_k &\leq \etaVar_k \leq -\tilde{\etaVar}_k + \bigM_k(1 - \deltaVar_k)
        &\forall\ k \in \breakpointsIter \label{BNRot:ABS2}
    \end{SetArray}
    \end{subequations}

    Define another set to return the appropriate function value.
    \vspace{1em}\\$\BNStep_{\{\theta_k\}}(\psi) \DEF$ \vspace{-1em}\\
    \begin{SetArray}
    {~\hfill~}
    {(\yy,\zz,\deltaVec,\ff)\in\BNRot_{\{\theta_k\}}(\psi)\times\Reals}
    \ff &\leq \overline{f}_{\boldsymbol\omega} + \bigN_{\boldsymbol\omega} \left\Vert \deltaVec - \boldsymbol\omega \right\Vert_1 
        &\forall\ \boldsymbol\omega \in \{0,1\}^\numBreakpoints \label{BNStep:Obj}
    \end{SetArray}
where
    $
    \overline{f}_{\boldsymbol\omega} 
        \EQ \brObj\left(\tan\left(
                \theta_0-\mathscr{Q}(\boldsymbol\omega)\theta_\numBreakpoints
            \right)\right)
    $
for each binary code $\boldsymbol\omega\in\{0,1\}^\numBreakpoints$.
\end{technique}

\begin{proposition}
If $\vecM$ and $\vecN$ are sufficiently large, we have
    \begin{equation}\label{tech:BNStep_Projection}
    \proj_{\left(r=\frac{\yy}{\zz},\,\ff\right)}\left(\BNStep_{\{\theta_k\}}(\psi)\right)  \EQ  \step_{\{b_k\}}(\brObj)
    \end{equation}
where $b_{\mathscr{Q}(\boldsymbol\omega)} = \tan\left(\theta_0-\mathscr{Q}(\boldsymbol\omega)\theta_\numBreakpoints\right)$ for each $\boldsymbol\omega\in\{0,1\}^\numBreakpoints$.
\end{proposition}
\begin{proof}
$\BNStep_{\{\theta_k\}}(\psi)$ is a closed form of Algorithm \ref{alg:BNFull}. Constraint \eqref{BNRot:Rotate} encodes the rotation step (line 7) while constraints \eqref{BNRot:ABS1} and \eqref{BNRot:ABS2} encode the Mirroring Step (lines 9-12). Then, \eqref{BNStep:Obj} handles the return value: the functional upper bound of whichever sector $(\yy,\zz)$ belongs to.

Notice that $\brObj\left(\sup(\DomOne)\right)-f_{j\boldsymbol\omega}$ is a sufficiently large value for $\bigN_{\boldsymbol\omega}$ since $\left\Vert\boldsymbol\omega-\deltaVec\right\Vert_1$ equals zero if $\boldsymbol\omega = \deltaVec$ but takes a value of at least one otherwise. Similarly, if $\hat z$ and $\hat y$ are represent elementwise upper bounds on $\DomTwo$, then $\left(\hat y^2+\hat z^2\right)^\frac{1}{2}$ is a sufficiently large value for each $\bigM_k$. 
\end{proof}

The key difference between Approximation \ref{tech:BNStep} and the Ben-Tal \& Nemirovski LP approximation of SOCP~\cite{Ben-Tal-Nemirovski} appears in constraints \eqref{BNRot:ABS1} and \eqref{BNRot:ABS2}; rather than $\etaVar_k \geq \vert\tilde\etaVar_k\vert$ which is used in~\cite{Ben-Tal-Nemirovski}, our goals require the binary variables $\deltaVec$ involved in linearizing $\etaVar_k = \vert\tilde\etaVar_k\vert$. 

\begin{remark}
    It is possible to implement Approximation \ref{tech:BNStep} with an arbitrary sequence of rotation angles $\{\theta_k\}$ so long as:
        $\tan(\theta_0) \geq \sup(\DomOne)$, $\theta_k$ decreases in $k$ such that $\theta_\numBreakpoints > 0$, and $\theta_0 > \sum_{k\in[\![\numBreakpoints]\!]}\theta_k$. We chose to use $\theta_k = \theta_0 / 2^k$ for its elegant integration with reflective Gray code and for compatibility with the upcoming Approximation~\ref{tech:BNFull}.
\end{remark}

    \begin{modelCustom}{1}{d}[BN-Stepwise Relaxation of BR-Redistricting]\label{model:BNStep}
    We apply Approximation \ref{tech:BNStep} to each district $j$ in Model \ref{model:BR-Redistricting} independently:
    \begin{equation*}
    \max_{(\x,\y,\z)\,\in\,\XYZFeasPoly_\textrm{B}}
    \left\{
    \sum_{j=1}^{\numDistricts}f_j
    \MID
    (\yy_j,\zz_j,\ff_j,\deltaVec^j)\in \BNStep(\psi)
    \quad
    \forall\ j\in\{1,\dots,\numDistricts\}
    \right\}
    \end{equation*}
    \end{modelCustom}

    \begin{corollary}
    Model~\ref{model:BNStep} is an upper bound on Model~\ref{model:BR-Redistricting} with a maximum error of: 
        $$
        \sum_{j=1}^{\numDistricts}\left(\max_{k\,\in\,\breakpointsIter} \big\{\brObj(b_{j,k}) - \brObj(b_{j,k-1})\big\}\right).
        $$
    \end{corollary}
    \begin{proof}
    Since $\brObj$ is assumed to be increasing over $\DomOne$, this follows from \eqref{tech:BNStep_Projection} by Proposition~\ref{prop:GraphRelaxation}, Proposition~\ref{prop:OverestimateError}, and Proposition~\ref{lem:StepOverestimate}.
    \end{proof}

\subsection{Ben-Tal \& Nemirovski Full Embedding}

Perhaps surprisingly, this Approximation~\ref{tech:BNStep} can be strengthened to match Approximation \ref{tech:LogE} (LogE) with the addition of only 4 continuous variables.

    \vspace{1em}

\begin{technique}{Ben-Tal \& Nemirovski Full Approximation}{BNFull}
Identify an initial angle $\theta_0\in[0,\frac{\pi}{2}]$ such that $\tan(\theta_0) \geq \sup(\DomOne)$, let $\theta_k = \frac{\theta_0}{2^k}$ for each rotation step $k\in[\![\numBreakpoints]\!]$, and recall the set $\BNRot_{\{\theta_k\}}(\psi)$ from Approximation \ref{tech:BNStep}. Additionally, define a set of vertices 
    $$
    U = \left\{
    \begin{bmatrix}\check{s}\\0\end{bmatrix},\ 
    \begin{bmatrix}\hat{s}\\0\end{bmatrix},\ 
    \begin{bmatrix}\check{s}\cos(\theta_\numBreakpoints)\\\check{s}\sin(\theta_\numBreakpoints)\end{bmatrix},\ 
    \begin{bmatrix}\hat{s}\cos(\theta_\numBreakpoints)\\\hat{s}\sin(\theta_\numBreakpoints)\end{bmatrix}
    \right\}
    $$
where $\check{s}$ and $\hat{s}$ are respectively a lower and upper bound on $\sqrt{y^2+z^2}$ over $(\yy,\zz)\in\DomTwo$.
Define another set to return the appropriate function value.
    \begin{subequations}
    \vspace{1em}\\$\BNFull_{\mathbf{s}}(\psi) \DEF$ \vspace{-1em}\\
    \begin{SetArray}
    {~\hfill~}
    {(\yy,\zz,\deltaVec,\ff,\lambdaVec)\in\BNRot_{\{\theta_k\}}(\psi)\times\Reals\times[0,1]^4}
    \begin{bmatrix}\xiVar_\numBreakpoints\\ \etaVar_\numBreakpoints\end{bmatrix}  &=  \sum_{\mathbf{u}\in U} \lambdaVar_\mathbf{u} \mathbf{u} \\
        1  &=  \sum_{\mathbf{u}\in U}\lambda_\mathbf{u} \\
    \ff   &\leq  \sum_{\mathbf{u}\in U} \lambdaVar_\mathbf{u} \overline{f}_{\boldsymbol\omega\mathbf{u}} + \bigN_{\boldsymbol\omega}\left\Vert\boldsymbol\omega-\deltaVec\right\Vert_1 
            \\&~\hspace{7em}\forall\ \boldsymbol\omega \in \{0,1\}^\numBreakpoints \label{BNFull:Obj}\notag
    \end{SetArray}
    \end{subequations}
where $\overline{f}_{\boldsymbol\omega\mathbf{u}}$ is the objective function value of vertex $\mathbf{u}$ rotated to the sector which corresponds to $\boldsymbol\omega$:
    $$
    \overline{f}_{\boldsymbol\omega\mathbf{u}}  \EQ  \brObj\big(\mathrm{Rot}(\mu_{\boldsymbol\omega})\mathbf{u}\big) \qquad\text{where}\qquad 
    \mu_{\boldsymbol\omega}  \EQ  \theta_0 - \big[\mathscr{Q}(\boldsymbol\omega) - 1\big]\theta_\numBreakpoints.
    $$
for each binary code $\boldsymbol\omega\in\{0,1\}^\numBreakpoints$.
\end{technique}

\begin{proposition}
If the cover polyhedra $\mathscr{B}_{\boldsymbol\omega}$ are constructed such that 
    \begin{equation}\label{prop:BNFull:Condition}
    \mathrm{ext}\left(\mathscr{B}_{\boldsymbol\omega}\right) = \left\{
            \mathrm{Rot}(\theta_{\mathscr{Q}(\boldsymbol\omega)})\mathbf{u} \MID \mathbf{u}\in U
        \right\},
    \end{equation}
we have
    \begin{equation}\label{prop:BNFull:Result}
    \proj_{\left(r=\frac{\yy}{\zz},\,\ff\right)}\left(\LogE_{V}(\psi)\right) \EQ \proj_{\left(r=\frac{\yy}{\zz},\,\ff\right)}\left(\BNFull_{\mathbf{s}}(\psi)\right).
    \end{equation}
\end{proposition}
\begin{proof}
If the vertices $V$ are constructed to match those implied by $\BNFull_\mathbf{s}(\psi)$ as required by \eqref{prop:BNFull:Condition}, then both $\LogE_V(\psi)$ and $\BNFull_\mathbf{s}(\psi)$ use the same convex combination of vertices to translate $(\yy,\zz)$ into $\ff$.
\end{proof}

\begin{modelCustom}{1}{e}[BN-Full Approximation of BR-Redistricting]\label{labeling:BNFull}
We apply Approximation \ref{tech:BNFull} to each district $j$ in Model \ref{model:BR-Redistricting} independently:
\begin{equation*}
\max_{(\x,\y,\z)\,\in\,\XYZFeasPoly_\textrm{B}}
\left\{
\sum_{j=1}^{\numDistricts}f_j
\MID
(\yy_j,\zz_j,\ff_j,\deltaVec^j,\lambdaVec^j)\in \BNFull_{\mathbf{s}^j}(\psi)
\quad
\forall\ j\in\{1,\dots,\numDistricts\}
\right\}
\end{equation*}
\end{modelCustom}

\section{Computational Improvements for Redistricting Problems}
\label{sec:extra-bounds}
\subsection{Bounding Ratios, VAP, and BVAP for Approximation \ref{tech:Step}}\label{sub:BoundingRatios}
By tightening the domain over which we make the Technique~\ref{tech:Step} approximations, we can tighten the approximation itself. Let $\DomOne_{j}$ represent the domain of the ratio $\tfrac{y_j}{z_j}$ that is feasible under Model \ref{model:BR-Redistricting}
    $$
     \DomOne_{j}  
\EQ \left\lbrace \frac{\yy_{j}}{\zz_{j}}\ :\ (\x,\y,\z) \in \XYZFeasPoly_\textrm{B}\right\rbrace.
    $$
Notice that, due to symmetry in $\XContPoly$, $\DomOne_j$ is invariant under the choice of $j$. Breaking this symmetry by ordering the districts according to their \BVAP\ will result in tightened domains and breakpoints specific to each district. Define a modified domain $\DomOne^\circ_j$ specific to each district $j$ wherein the $\yy$ variables are ordered
    $$
    \DomOne^\circ_{j}  
    \EQ \left\lbrace \frac{\yy_{j}}{\zz_{j}}\ :\ (\x,\y,\z) \in \XYZFeasPoly_\textrm{B},  \quad 
        \yy_1 \leq \yy_2 \leq \ldots \leq \yy_{\numDistricts}\right\rbrace.
    $$

Naturally, each $\DomOne^\circ_j$ will be a proper subset of $\DomOne_j$ so that breakpoints $\mathbf{b}_j$ can be generated specific to $\DomOne^\circ_j$ for each district $j$. In particular, we can find $b_{j0}$ and $ b_{j\numBreakpoints}$ such that $b_{j0} \leq \tfrac{\yy_j}{\zz_j} \leq b_{j\numBreakpoints}$ for any element of $\DomOne^\circ_{j}$.

Since $\brObj$ is increasing, it is also possible to break symmetry and achieve a similar effect by ordering on the $\zz$ variables, the ratios $\yy_j/\zz_j$, or the function output variables $\ff_j$. However these orderings are inconsistent so only one order may be applied in symmetry-breaking steps. Through some experiments, we found ordering the $\yy_j$ variables to be most effective.  Perhaps the ratio is more complicated without being better correlated with the objective than the $\y$ variables.
    \begin{figure}[H]
        \centering
            \centering
            \includegraphics[scale=0.45]{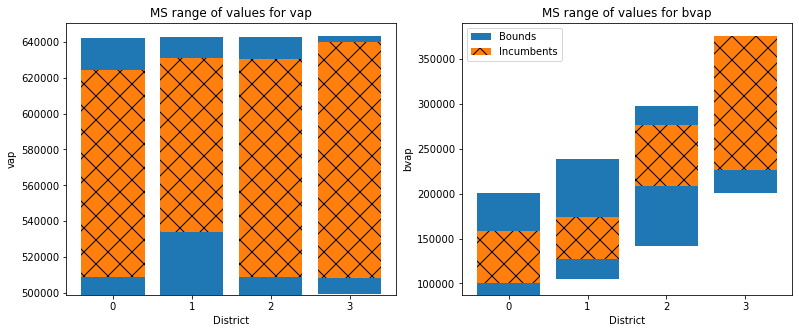}
            \label{fig:MS_bounds}
            \caption{Bounds created for Mississippi with 10 minutes of computing time for each max and min computation. Although the computation does not reach optimality, it provides significantly better bounds than simply using the upper bound for each \( \varColor{ \mathbf{y}} \) and \( \varColor{ \mathbf{z}} \) variable. Furthermore, these bounds can be computed in parallel. See Appendix~\ref{sec:variable_ranges} for additional information on other states.}
    \end{figure}
Good bounds on VAP and BVAP are doubly important for the performance of the LogE approximation as we can reduce the length of the rays and arcs needed to generate extreme points.  We do this by just computing the intersection with the domain created by the bounds; see Figure \ref{fig:BVAPCompare}.

\begin{figure}[H]
    \centering
    \begin{minipage}{.5\textwidth}
        \centering
        \includegraphics[trim= 1cm 0.1cm 3cm 0.7cm, clip, scale=0.35]{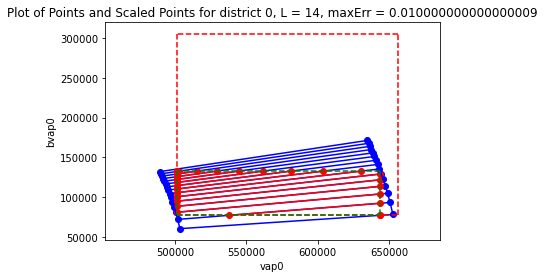}
        \includegraphics[trim= 1cm 0.1cm 3cm 0.7cm, clip, scale=0.35]{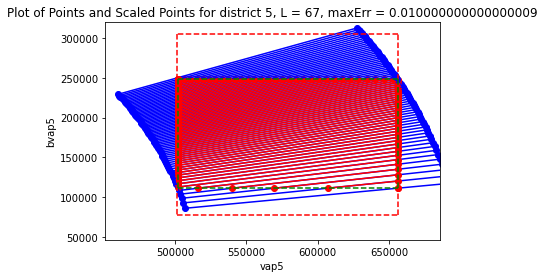}
    \end{minipage}%
    \begin{minipage}{.5\textwidth}
    \captionsetup{width=\textwidth} 
        \caption{Comparison of the BVAP (Black Voting Age Population) and VAP (Voting Age Population) ranges for the 1st and 6th districts of South Carolina when districting at the county level is shown. The dashed red bounding boxes represent general bounds for each district. The blue rays depict breakpoint rays while the red lines are their intersections with the tightest bounds known for each district. Clearly, covering the dashed, general bounds to this density would require significantly more breakpoints.}
        \label{fig:BVAPCompare}
    \end{minipage}
\end{figure}

\subsection{Gradient Cuts}\label{sec:gradient_cuts}

In prior sections, we focused on separable relaxations of the form \((\y_j, \z_j, \ff_j) \in R_j\) to relax the graph or hypograph individual terms in the objective. In this section, we shift our attention to tightening the formulation \(\mathcal{U}\) that describes the variable space for \((\x, \y, \z)\), by adding additional inequalities in terms of \((\y, \z)\) based on the objective. Our goal is to introduce linear inequalities for these variables that appropriately bound the objective function. Specifically, we consider the projection onto the space defined by \((\y, \z)\) since the objective is fully described by these variables. By obtaining a tight description of this projection, we can derive dual bounds, thereby enhancing the optimization process. This approach can serve both as a preprocessing step and an iterative optimization technique.

\begin{lemma}
\label{lem:general_gradients}
    Let $\brObj \colon \Reals_+ \to \Reals$ be differentiable.  Let $\psi \colon \R^{m\cdot s} \times \R^{m \cdot s} \to \Reals$ defined by $\psi(\y,\z) = \sum_{j=1}^m \brObj\left(\sum_{t = 1}^s \frac{\yy_{jt}}{\zz_{jt}}\right)$. Then for any $\y,\z > 0$ we have
    $$
    \nabla \psi(\y,\z) \cdot (\y,\z)  \EQ  0.
    $$
\end{lemma}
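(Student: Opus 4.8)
The plan is to exploit the scale invariance of each ratio term. Observe that $f$ depends on $(\yy,\zz)$ only through the quantities $\yy_{ij}/\zz_{ij}$, each of which is unchanged when we replace $(\yy,\zz)$ by $(t\yy,t\zz)$ for any $t>0$. Hence $f(t\yy,t\zz)=f(\yy,\zz)$ for all $t>0$; that is, $f$ is positively homogeneous of degree $0$ on the open region where $\zz>0$ (precisely where the ratios are defined and $\phi$ can be composed differentiably, so the gradient exists). The whole statement is then an instance of Euler's identity for homogeneous functions applied with degree $0$.

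With homogeneity in hand, the cleanest route is to differentiate the identity $f(t\yy,t\zz)=f(\yy,\zz)$ in $t$ via the chain rule and evaluate at $t=1$: the left-hand side produces $\nabla f(\yy,\zz)\cdot(\yy,\zz)$, while the right-hand side is constant in $t$ and so contributes $0$. This delivers the claim immediately, without reference to the specific form of $\phi$.

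As a self-contained verification, and to match the explicit style of the preceding propositions, I would also carry out the direct computation. Writing $s_j=\sum_{i=1}^m \yy_{ij}/\zz_{ij}$, the chain and quotient rules give
\begin{equation*}
\frac{\partial f}{\partial \yy_{ij}} = \frac{\phi'(s_j)}{\zz_{ij}}, \qquad \frac{\partial f}{\partial \zz_{ij}} = -\frac{\phi'(s_j)\,\yy_{ij}}{\zz_{ij}^2}.
\end{equation*}
Forming the dot product $\nabla f(\yy,\zz)\cdot(\yy,\zz)$, the contribution of each index pair $(i,j)$ is
\begin{equation*}
\frac{\phi'(s_j)}{\zz_{ij}}\,\yy_{ij} \;-\; \frac{\phi'(s_j)\,\yy_{ij}}{\zz_{ij}^2}\,\zz_{ij} \;=\; 0,
\end{equation*}
so the terms cancel pairwise and the total sum vanishes.

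There is no genuine obstacle here: the lemma is Euler's theorem for a degree-$0$ homogeneous function, and the only point requiring care is the restriction $\zz>0$ ensuring that every partial derivative exists (which is why the hypothesis $\yy,\zz>0$ is imposed). Differentiability of $\phi$ supplies the factor $\phi'(s_j)$, and the cancellation above is term-by-term irrespective of the form of $\phi$, so the argument is fully general.
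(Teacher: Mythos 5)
Your proof is correct. The paper's own argument is exactly your second, computational half: it reduces to $n=1$ and $i=1$ by separability and symmetry, computes $\partial f/\partial y_1 = \phi'(\cdot)/z_1$ and $\partial f/\partial z_1 = -\phi'(\cdot)\,y_1/z_1^2$, and observes the term-by-term cancellation in the dot product. What you add is the conceptual framing: $f$ is positively homogeneous of degree $0$ because each ratio $\yy_{ij}/\zz_{ij}$ is scale-invariant, so the identity is Euler's theorem with degree $0$, obtained by differentiating $f(t\yy,t\zz)=f(\yy,\zz)$ at $t=1$. This buys an explanation of \emph{why} the gradient is orthogonal to the position vector (and makes clear the result would survive any reparametrization preserving degree-$0$ homogeneity), whereas the paper's direct computation is shorter and self-contained. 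Both are valid; your explicit partials agree with the paper's, and your remark about needing $\zz>0$ for differentiability matches the hypothesis as stated.
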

\begin{proof}
Due to the separability of $\psi$, it suffices to prove this for $s=1$ so we forgo the index $j$. Similarly, due to symmetry in the variables, we can just show this w.l.o.g. for $j=1$. Notice that
    \begin{equation*}
    \pder{y_1} \brObj\left(\tfrac{y_1}{z_1} + \sum_{t=2}^\numYears\tfrac{y_t}{z_t} \right)  \EQ  \brObj'\left(\tfrac{y_1}{z_1} + \sum_{t=2}^\numYears\tfrac{y_t}{z_t}\right) \tfrac{1}{z_1} \qquad\text{and}
    \end{equation*}
    \begin{equation*}
    \pder{z_1} \brObj\left(\tfrac{y_1}{z_1} + \sum_{t=2}^\numYears\tfrac{y_t}{z_t}\right)  \EQ  \brObj'\left(\tfrac{y_1}{z_1} + \sum_{t=2}^\numYears\tfrac{y_t}{z_t}\right) \frac{-y_1}{(z_1)^2}.
    \end{equation*}
Then 
$
\nabla \psi(y,z) \cdot (y,z) = \brObj'(\tfrac{y}{z}) \frac{1}{z} y - \brObj'(\tfrac{y}{z}) \frac{y}{z^2} z = 0.
$
\end{proof}

We use this result in the cut based separation Algorithm \ref{alg:GradCuts}. 

\setlength{\textfloatsep}{5pt}
\begin{algorithm}[ht!]
\renewcommand{\baselinestretch}{1.0}\normalsize
\caption{Gradient cut separation at the root node}
\label{alg:GradCuts}
\begin{algorithmic}[1]
\State \textbf{Input:} $\XYZFeasPoly$, $\brObj$, $\overline{\psi}$, \textit{tol}
\State \textbf{Output:} $Q$ 
\State Find upper and lower bounds on $\yy_t$ and $\zz_t$.  That is, for $\alpha = \yy_t$ and $\zz_t$ for $t \in \years$:
$$
\ell^\alpha_t / u^\alpha_t  \EQ  \min / \max \{ \alpha : (\x, \y, \z) \in \XYZFeasPoly\}.
$$
\State Find upper and lower bounds on $\zz_t - \yy_t$ for each $t \in \years$:  
$$
\ell^{\z-\y}_t / u^{\z-\y}_t  \EQ  \min / \max \{ \zz_t - \yy_t : (\x, \y, \z) \in \XYZFeasPoly\}.
$$
\State Set 
$Q = \{ (\y, \z) : \mathbf\ell^{\y} \leq \y \leq \mathbf u^{\y}, \ \ \mathbf\ell^{\z} \leq \z \leq \mathbf u^{\z}, \ \ \mathbf\ell^{\z- \y} \leq \z - \y \leq \mathbf u^{\z - \y}\}.$
\While{$\max\{\psi(\y,\z) : (\y,\z) \in Q\} > \overline{\psi} + tol$}
\State Let $(\bf y^*, \bf z^*)$ be an optimal solution to continuous relaxation (optimization over $Q$).
\State Find upper and lower bounds on $\nabla F(\bf y^*, \bf z^*) \cdot (\y,\z)$:
$$
\ell^\nabla / u^\nabla  \EQ  \min / \max \{\nabla F(\bf y^*, \bf z^*) \cdot (\y,\z) :  (\x, \y, \z) \in \XYZFeasPoly\}
$$
\If{$\ell^\nabla > 0$ or $u^\nabla < 0$}
\State Add to $Q$ the bounds 
$
\ell^{\nabla} \leq \nabla F(\bf y^*, \bf z^*) \cdot (\y,\z) \leq u^{\nabla}.
$
\ElsIf{$\ell^\nabla < 0$ and $u^\nabla > 0$} {Break while loop}
\EndIf
\EndWhile
\State Add all bounds from $Q$ to IP and solve IP. 
\end{algorithmic}
\end{algorithm}
Throughout Algorithm~\ref{alg:GradCuts}, we maintain that $Q \supseteq \proj_{\y,\z}(\XYZFeasPoly_\textrm{B})$.  Indeed, we would like to have  $Q = \proj_{\y,\z}(\XYZFeasPoly_\textrm{B})$, but achieving such a representation is intractable and not strictly necessary for the optimization.

Let $\mathcal S$ be the set of points $(\bf y^*, \bf z^*)$ computed in steps 1-11 and 
let $\ell^{\nabla}_{\bf y^*,  \bf z^*}, u^{\nabla}_{ \bf y^*,  \bf z^*}$
be the corresponding valid bounds computed.  Then in step 12, we solve the following augmented stepwise approximation model:

\begin{modelCustom}{1}{f}[Stepwise Approximation with Preprocessing via Gradient Cuts of BR-Redistricting]\label{labeling:Step+Grad}
    We apply Approximation \ref{tech:Step} with gradient cuts to each district \(j\) in Model \ref{model:BR-Redistricting} independently:
    \begin{equation*}
    \max_{(\x,\y,\z)\,\in\,\XYZFeasPoly_\textrm{B}^\nabla}
    \left\{
    \sum_{j=1}^{\numDistricts}f_j
    \MID
    (\yy_j,\zz_j,\ff_j,\deltaVec^j)\in \lowhat{\step}_{\{b_j\}}\!\left(\brObj\right)
    \quad
    \forall\ j\in\{1,\dots,\numDistricts\}
    \right\}
    \end{equation*}
    where 
$$
\XYZFeasPoly_\textrm{B}^\nabla := \left\{ (\x,\y,\z)\,\in\,\XYZFeasPoly_\textrm{B} \quad    \mid  \quad\ell^{\nabla}_{\overline{\bf y}, \overline{\bf z}} \leq \nabla F(\overline{\bf y}, \overline{\bf z}) \cdot (\y,\z) \, \leq\, u^{\nabla}_{\overline{\bf y}, \overline{\bf z}}  \quad \forall\,  (\overline{\bf y}, \overline{\bf z}) \in \mathcal{S}\right\}.
$$
   \end{modelCustom}

\section{Multi-Ratio Representation Objectives} \label{sec:multi-ratios}
In this section, we generalize our approaches to work with multiple ratios to accomodate Model~\ref{model:CPVI}. In particular, we will work with objective functions from \eqref{eq:general}; that is, functions of the form
    $$
    \sum_{j=1}^{\numDistricts} \brObj\left(\beta_0 + \beta_j \sum_{t=1}^{\numYears} \frac{\yy_{jt}}{\zz_{jt}}\right),
    $$
where $s \geq 2$. Without
 loss of generality, we will assume for our models that $\beta_0 = 0$, and $\beta_j = 1$ for all $j =1, \dots, \numDistricts$.  This can be accomplished by absorbing the $\beta_0$ into the function $\phi$ and absorbing the $\beta_j$ into the ratio variables.

Thus,  we will focus on graph and hypograph relaxations of $\phi \colon \DomOne \to \R$, $\hat \phi\colon \DomOne_1 \times \dots \times \DomOne_\numYears \to \Reals$
and $\psi \colon \DomTwo_1 \times \dots \DomTwo_\numYears \to \R$ where $\DomOne_t \subseteq \R$, $\DomTwo_t \subseteq \R^2$, with $\DomOne_t :=\left \{ \tfrac{\yy_t}{\zz_t} | (\yy_t, \zz_t) \in \DomTwo_t \right\}$, $\DomTwo = \DomTwo_1 \times \dots \times \DomTwo_\numYears$, 
\begin{equation}\label{eq:AlternateExpressions:MultiRatio}
    \DomOne \coloneq \left\{r=\sum_{t =1}^{\numYears} \frac{\yy_t}{\zz_t}\MID(\yy_t,\zz_t)\in\DomTwo_t, t \in \yearsIter\right\}
    \quad\textrm{and}\quad
    \psi(\y,\z) \coloneq \hat \phi\left(\tfrac{\yy_1}{\zz_1}, \dots, \tfrac{\yy_\numYears}{\zz_\numYears} \right) \coloneq \brObj\left(\sum_{t=1}^\numYears \frac{\yy_t}{\zz_t}\right).
    \end{equation}

We begin by altering Technique \ref{tech:Step} to fit this objective and create a stepwise overestimation (depicted in Figure~\ref{fig:2d-stepwise}) of $\hat \phi$ where each step has a rectangular domain. In doing so, we create an $\varepsilon$-Relaxation.  Just as in the single-ratio stepwise relaxation, this technique allows us to sidestep dealing with the nonlinear complications of modeling $r = \tfrac{\yy}{\zz}$, which is why we work with the stepwise relaxation instead of a piecewise linear approximation.
\begin{figure}[h!]
    \centering    \includegraphics[width=0.4\linewidth]{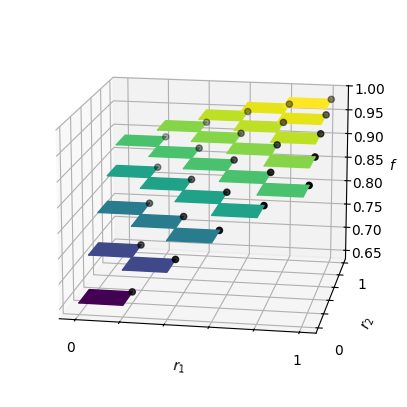}
    \vspace{-0.5cm}
    \caption{2D stepwise function.  On each rectangle $[b_{1i}, b_{1(i+1)}] \times [b_{2j}, b_{2(j+1)}]$ the stepwise plot takes the value $\phi(b_{1(i+1)} +b_{2(j+1)})$.  Thus, assuming this $\phi$ is an increasing function, this stepwise function will overestimate $\hat \phi(r_1,r_2)$.} 
    \label{fig:2d-stepwise}
\end{figure}

    \begin{technique}{Stepwise Approximation for Multi-Ratio Problems}{Step-multi-ratio}
    \label{tech:stepwise-multi}
Let $\brObj\colon \Reals \to \Reals$ be an increasing function, bounded above. Given a finite sequence of breakpoints $\{b_{tk}\}_{k\in\breakpointsIter}$ for each $ t\in \numYears$ such that $b_{t\ell} \geq \frac{\yy_t}{\zz_t}$ for all $t \in \yearsIter$ and $(\y,\z) \in \XFeasPoly$, define the following set:

\begin{subequations}\label{tech:Step:set-multi-ratio}
\vspace{1em}
$\steps  \DEF
\Bigg\{((\y,\z),\ff,\deltaVec) \in \XFeasPoly \times \Reals \times \{0,1\}^{\numYears \times \numBreakpoints}$
\begin{partialSetArray}
    \ff - \brObj\left(\sum_{t=1}^{\numYears} b_{t, k_t}\right)
        &\leq \bigN_{\mathbf{k}} \sum_{t=1}^{\numYears}(1 - \deltaVar_{tk_t})
        &\forall\ \mathbf{k} \in \breakpointsIter^\numYears
        \label{tech:Step:set:objective-multi-ratio}\\
    b_k \zz_t - \yy_t
        &\leq \bigM_{tk} \deltaVar_{tk}
        &\forall\ t \in \yearsIter,\ k \in \breakpointsIter
        \label{tech:Step:set:ratio-multi}\\
    \deltaVar_{t(k-1)} 
        &\leq \deltaVar_{tk}
        &\forall\ t \in \yearsIter,\ k \in \breakpointsIter
        \label{tech:Step:set:symmetry-multi}
\end{partialSetArray}
\end{subequations}

\end{technique}

\begin{proposition}\label{lem:StepOverestimate:MultiRatio}
    If $\brObj$ is increasing over $\DomOne$, then $\proj_{(\y,\z,\ff)}\left(\steps\right)$ is an $\varepsilon$-relaxation of $\hyp(\psi)$ with
        \begin{equation*}
        \hyp(\psi) \subseteq \proj_{(\y,\z,\ff)}\left(\steps\right)
        \qquad\textrm{and}\qquad
        \varepsilon = \max_{t \in \yearsIter, k_t\,\in\,\breakpointsIter} \left\{\brObj\left(\sum_{t = 1}^\numYears b_{t,(k_t + 1)}\right) - \brObj\left(\sum_{t = 1}^\numYears b_{t,k_t}\right)\right\}.
        \end{equation*}
    \end{proposition}
    \begin{proof}
Fix $(\y,\z) \in \XFeasPoly $
and let $ [b_{1,(k_1-1)}, b_{1,k_1)}) \times \dots  \times [b_{\numYears, (k_{\numYears-1)}}, b_{\numYears,k_\numYears)})$ 
be the half-open box containing $\left(\tfrac{\yy_1}{\zz_1}, \dots, \tfrac{\yy_{\numYears}}{  \zz_{\numYears}}\right)$.
Then $\yy_t < b_{t k_t} \zz_t$, and thus $0 <  b_{t k_t} \zz_t - \yy_t$.

Thus  any  $(\y, \z, \ff, \deltaVec) \in \steps$  
requires that 
 $\deltaVar_{t, k'_t} = 1$ for all $k'_t \geq k_t$ by \eqref{tech:Step:set:ratio-multi}.

But then \eqref{tech:Step:set:objective-multi-ratio} requires that 
$
\ff - \phi\left(\sum_{t =1}^\numYears b_{t,k_t}\right) \leq 0,
$, i.e.,  $
\ff  \leq  \phi\left(\sum_{t =1}^\numYears b_{t,k_t}\right).
$

Further, since $\phi$ is increasing, we have
  $$
    \brObj\left(\sum_{t=1}^{\numYears}b_{i(t_i^*-1)}\right)  
        \LEQ   \brObj\left(\sum_{t=1}^{\numYears}\frac{\yy_t}{\zz_t}\right)  
        \EQ  \psi(\y,\z)  \LEQ  \brObj\left(\sum_{t=1}^{\numYears}b_{tk_t^*}\right).
    $$

Clearly, setting $\deltaVar_{t,k'_t} = 0$ for all $k'_t < k_t$ establishes that if $(\y,\z,\ff) \in \hyp(\psi)$, then $(\y,\z,\ff,\deltaVec) \in \steps$.
Further, if $(\y,\z,\ff) \in \proj_{\y,\z,\ff}(\steps)$ then either $(\y,\z,\ff) \in \hyp(\psi)$ or 
$
\brObj\left(\sum_{t=1}^{\numYears}\frac{\yy_t}{\zz_t}\right) < \ff.
$
In the latter case, this implies that 
$
\ff - \brObj\left(\sum_{t=1}^{\numYears}\frac{\yy_t}{\zz_t}\right) < \varepsilon,
$
establishing the $\varepsilon$-relaxation.

    \end{proof}

\paragraph{Partisan Election Stepwise Model}
We now turn our attention to the partisan election objective given in \eqref{obj:CPVI}. For simplicity, define 
    $$
    \hat \phi(r_1, r_2)  \EQ  \brObj\left(\frac{50 (r_1 + r_2)-51.69}{4.8}\right)
    $$
so that $\hat \phi\left(\frac{\DV_{16}}{\TV_{16}}, \frac{\DV_{20}}{\TV_{20}}\right) = P(\textrm{a Democrat is elected})$ and we are left to manipulate the interior. 
    
For each $i \in \parcels$ and $s \in \years = \{2016,2020\}$, let $\hypertarget{def:TV}{\TV_{it}}$ and $\hypertarget{def:DV}{\DV_{it}}$ respectively represent the total number of votes cast and the number of votes cast in favor of the Democrat party within parcel $i$ during year $t$. Then define $\ff_j$ to be the approximate objective function value for each district $j \in\{1,\dots,\numDistricts\}$. Now we can adapt Technique \ref{tech:stepwise-multi} to this context by letting $\yy_{jt} = \sum_{i \in \parcels}\TV_{it}\xx_{ij}$ and $\zz_{jt} = \sum_{i \in \parcels}\DV_{it}\xx_{ij}$.

\begin{modelCustom}{2}{f}[Stepwise Approximation Multi-Ratio for CPVI-redistricting]\label{labeling:StepMultiRatio}
We apply Relaxation \eqref{tech:Step:set-multi-ratio} to each district $j$ in Model \ref{model:CPVI} independently:
\begin{equation*}
\max_{(\x,\y,\z)\,\in\,\XYZFeasPoly_\textrm{P}}
\left\{
\sum_{j=1}^{\numDistricts}f_j
\MID
(\y_j,\z_j,\ff_j,\deltaVec^j)\in \lowhat{\step}_{\{b_{tk}\}}^{\numYears}\!\left(\psi\right) 
\quad
\forall\ j\in\{1,\dots,\numDistricts\}
\right\}.
\end{equation*}
\end{modelCustom}

In our CPVI-redistricting setting, we have $\yy_{jt} \geq 0$ and $\zz_{jt} \leq (1+\tau)\tildep$; thus $\brObj\left(\sum_{t=1}^{\numYears} b_{jt\numBreakpoints_j}\right) - \brObj\left(\sum_{t=1}^{\numYears} b_{jtk_{jt}}\right)$ is the smallest valid value of $\bigN_{j\mathbf{k}}$ while $b_{jtk_{jt}}(1+\tau)\tildep$ is sufficiently large for $\bigM_{jtk_{jt}}$ for all $j\in\{1,\dots,\numDistricts\}$, $k\in\breakpointsIter$, and $t\in\yearsIter$. Good bounds on the features $\y$ and $\z$ are again important for identifying good breakpoints in much the same way that they were for Technique \ref{tech:Step}.

The multiple ratios being approximated make finding any good closed form  breakpoint scheme difficult. We used SciPy to find breakpoints with low maximum error in addition to expected error.

Gradient Cuts, like those in Section \ref{sec:gradient_cuts}, can indeed be computed for multi-ratio objectives. However, in our attempt, the cuts were very slow to compute and did not reach sufficient depth to improve the bounds of the continuous relaxation.  More investigation into this direction is needed.

\section{Computations}
\label{sec:Computations-and-Conclusions}
Optimization models were implemented in Python 3.7 using the Python API for Gurobi version 9.1.2~\cite{gurobi}. Computations were run on a desktop running Windows 10 Pro with 64GB of RAM, using an Intel Core i9-10980XE CPU (3.00GHz, 18 Cores, 36 Threads) and an AMD Radeon Pro WX 2100.  We conduct experiments at the county level, which serves as a tractable testbed for developing and validating our optimization techniques for nonconvex representation objectives. At this granularity, the coarseness of building blocks limits achievable population balance; we employ a tolerance of $\tau = 20\%$ (i.e., $\pm 10\%$), which reflects the constraints imposed by keeping counties whole. For reference, Alabama has 67 counties, 1,437 tracts, 1,837 precincts, and 185,976 blocks~\cite{buchanan2024widespread}. While computational redistricting papers working at the tract or precinct level typically use deviations of $\pm 0.5\%$ to $\pm 2\%$~\cite{validi2022imposing, shahmizad2024political, deford2021recombination, autry2021metropolized}, and enacted congressional plans achieve near-perfect population equality by working at the block level, county-level instances are fundamentally more constrained. Extending our techniques to finer granularities is a direction for future work.
Code for our computations is provided at \url{https://github.com/RobertHildebrand/Redistricting-Representation}.

We present the computational results discussed in this paper, with additional tables detailing our studies available in Appendix~\ref{appendix:computations}. Our computational tables include several key metrics used to evaluate MIP relaxations:

\begin{itemize}
    \item \textbf{Dual Bound}: This refers to the best dual bound obtained from the MIP approximation of the nonlinear optimization problem. Note: for the VA-PWL approach, the dual bounds are not necessarily valid. We have lightened the numbers there to indicate they should not be used.  Bounds from the stepwise approaches are indeed proper bounds.  Bounds from our other approaches are only accurate to a certain decimal.  See Appendix~\ref{app:error} for a discussion on the accuracy of these bounds.
    \item \textbf{MIP Obj.}: This metric represents the highest objective function value reported by the MIP solver when solving the approximate model.
    \item \textbf{Time/MIP Gap}: Here, we display the final primal-dual gap of the MIP, unless the solver achieved optimality, in which case the runtime is provided.
    \item \textbf{Primal Bound}: This value is obtained by evaluating the best solution found by the MIP under the original nonlinear objective function.
    \item \textbf{Gap}: This is calculated by comparing the Dual Bound to the Primal Bound: $\text{Gap} = \frac{\text{Dual Bound} - \text{Primal Bound}}{\text{Primal Bound}} \times 100\%$, providing insight into the accuracy of the approximation.
\end{itemize}

These metrics allow us to analyze the following:

\begin{enumerate}
    \item The quality of the dual bound produced by each model.
    \item The best possible dual bound a model can achieve as the difference between the Dual Bound and the MIP Obj. approaches zero (i.e., as the MIP Gap approaches zero).
    
    \item The effectiveness of the primal solutions found by each model (i.e., does the solver's primal search perform better with a specific model?).
    \item The overall Gap, derived from the dual and primal bounds. Note that since the models are approximations, this gap inherently includes both the MIP Gap and the error introduced by the approximation.
\end{enumerate}

\subsection{Single Ratio Results}
Initially, we compare stepwise and piecewise-linear models.  Note that piecewise-linear models need to handle two layers of approximation: one from approximating the function $\brObj$ and another from approximating the ratio. The choice of the number of breakpoints for each model can affect the outcome and solve times.  We display a comparison with the number of breakpoints in Figure~\ref{fig:pwl-step-breakpoints}.
    \begin{figure}[!ht]
        \centering
        \begin{tikzpicture}
        \node at (0,0) {\includegraphics[width=0.7\textwidth]{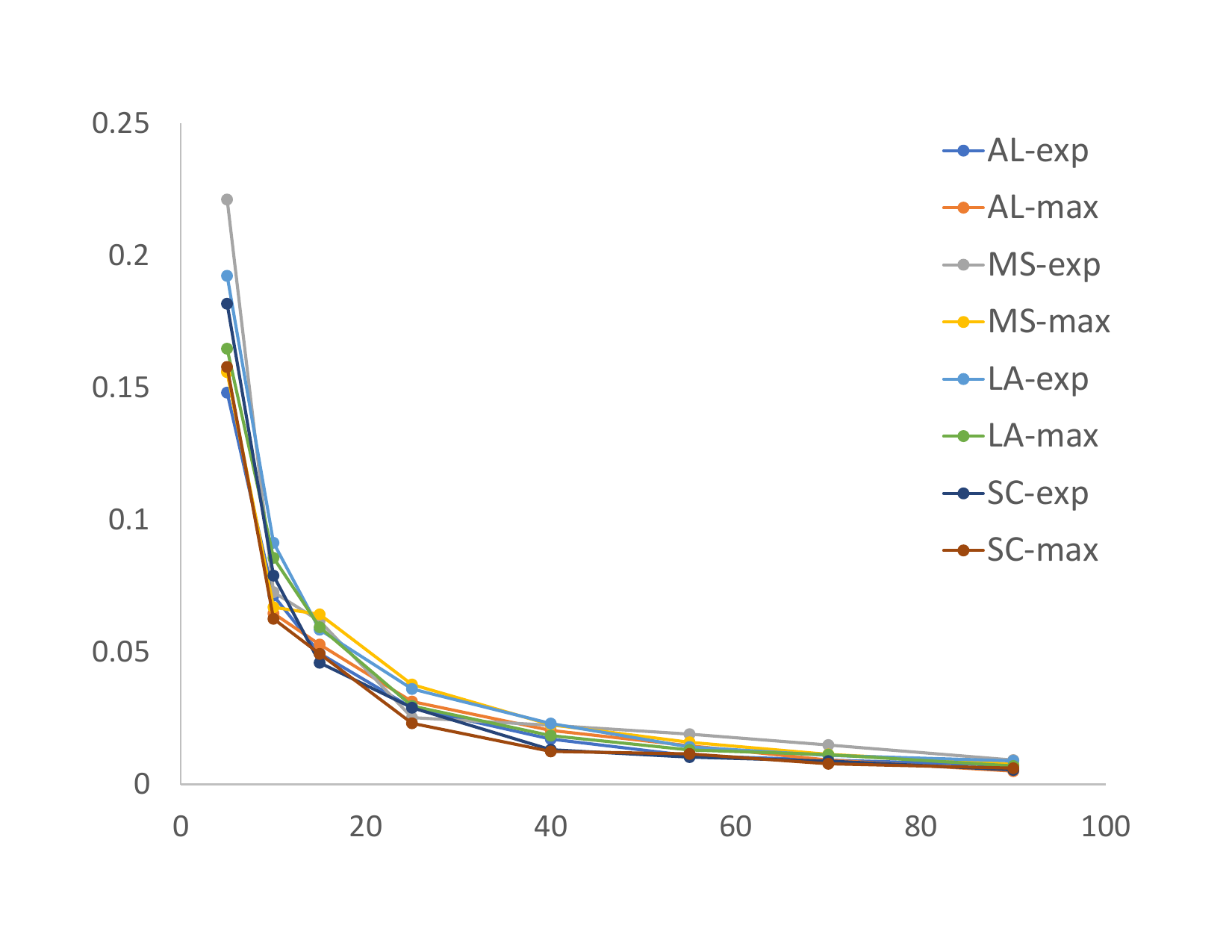}};
        \node at (0,-4) {$\numBreakpoints$};
        \node[rotate = 90] at (-5.4,0) {Error $\left(\frac{\textrm{MIP Obj.}-\textrm{Primal bound}}{k}\right)$};
        \end{tikzpicture}
    \caption{Per-district approximation error in the best solutions by state and number of breakpoints.}
    \label{fig:pwl-step-breakpoints}
    \end{figure}
    
In Table~\ref{tab:PWLvStepComparison} we report the best $\ell$ for each approach when provided 2 hour time limit. Although stepwise functions are traditionally considered a weaker approximation, as mentioned earlier, the stepwise approach sidesteps nonconvexity and performs favorably in this context. Perhaps surprisingly, the stepwise approaches tend to perform better than the piecewise-linear approaches. The two stepwise models have the smallest optimality gaps while providing improved bounds over the standard PWL model. The standard piecewise-linear model performs the worst outside of a few good objective values.

    \begin{table}[ht!]
    \caption{The best two-hour runs of the PWL, VA-PWL, and Stepwise approaches on key states. An * indicates that a MIP-optimal solution was found in that instance, and thus that model could not produce a better bound.  We shade dual bounds that are not necessarily valid due to inaccuracy in the model. The best known bounds are in bold.} \label{tab:PWLvStepComparison}
    \centering
    \begin{tabular}{|ccccc|ccc|}\hline
        \textbf{State} & $m$ & $n$ & \textbf{Model} & $\numBreakpoints$ & \textbf{Dual Bnd} & \textbf{Primal Bnd} & \textbf{Gap} \\
        \hline
        \multirow{4}{*}{AL} & \multirow{4}{*}{7} & \multirow{4}{*}{67}
              & PWL          & 60     &  2.259 & 1.571 & 30\% \\
            &&& VA-PWL       & 60     & \graycell 1.714 & 1.389 & 19\% \\
            &&& Step-Max     & 70     & 1.775 & 1.570 & 12\% \\
            &&& Step-Exp.    & 70     & \textbf{1.767} & \textbf{1.574} & 11\% \\
        \hline
        \multirow{4}{*}{MS} & \multirow{4}{*}{4} & \multirow{4}{*}{82}
              & PWL          & 60     &  1.878 & 1.655 & 12\% \\
                      &&& VA-PWL*      & 60     & \graycell 1.643 & 1.467 & 11\% \\
                      &&& Step-Max     & 70     & \textbf{1.735} & 1.647 & 5\%  \\
                      &&& Step-Exp.    & 40     & 1.781 & \textbf{1.664} & 7\%  \\
        \hline
        \multirow{4}{*}{LA} & \multirow{4}{*}{6} & \multirow{4}{*}{64}
              & PWL          & 60     & \graycell 2.193 & \textbf{1.575} & 28\% \\
                      &&& VA-PWL       & 60     & \graycell 1.772 & 1.433 & 19\% \\
                      &&& Step-Max     & 40     & 1.768 & 1.562 & 12\%  \\
                      &&& Step-Exp.    & 55     & \textbf{1.743} & 1.562 & 10\%  \\
        \hline
        \multirow{4}{*}{SC} & \multirow{4}{*}{7} & \multirow{4}{*}{46}
              & PWL          & 60     & \graycell 1.724 & 1.313 & 24\% \\
                      &&& VA-PWL*      & 60     & \graycell 1.322 & 1.142 & 14\% \\
                      &&& Step-Max*    & 70     & 1.402 & 1.327 & 5\%  \\
                      &&& Step-Exp.*   & 70     & \textbf{1.377} & \textbf{1.329} & 4\%  \\
        \hline
    \end{tabular}
    \end{table}

Next, we compare our logarithmic piecewise models with the best stepwise model when varying $\ell$. We exhibit runs with the best solutions measured with the MIP objective.  We show this in Table~\ref{tab:BestInstances}.  Impressively, the logarithmic models run for just 2 hours were competitive with the stepwise models run for 10 hours.  This shows that the logarithmic models produced strong dual bounds significantly faster than the stepwise models.  Maps of the prevailing primal solutions are given in Figure \ref{fig:BestInstances} under Appendix \ref{app:Maps}.
    \begin{table}[ht!]
      \centering
      \caption{Problem instances with largest primal bound  values and smallest gaps.}
      \label{tab:BestInstances}
      \begin{tabular}{|cccccc|ccc|}
        \hline
        \textbf{State} & $m$ & $n$ & \textbf{Model} & $\numBreakpoints$ & \textbf{Time} & \textbf{Dual Bnd} & \textbf{Primal Bnd} & \textbf{Gap} \\
        \hline
        \multirow{3}{*}{AL} & \multirow{3}{*}{7} & \multirow{3}{*}{67} & BN-Full & 7 & 2 hrs. & 1.818 & 1.582 & 13.0\% \\
        &&& LogE & 90 & 2 hrs. & 1.797 & 1.580 & 12.1\% \\
        &&& Step-Max & 90 & 10 hrs. & \textbf{1.783} & \textbf{1.586} & \textbf{11.0\%} \\
        \hline
        \multirow{3}{*}{MS} & \multirow{3}{*}{4} & \multirow{3}{*}{82} & Step-Exp & 40 & 2 hrs. & 1.781 & \textbf{1.664} & 6.5\% \\
        &&& LogE & 90 & 2 hrs. & \textbf{1.726} & 1.642 & \textbf{4.9\%} \\
        &&& Step-Max & 90 & 10 hrs. & 1.728 & 1.639 & 5.2\% \\
        \hline
        \multirow{3}{*}{LA} & \multirow{3}{*}{6} & \multirow{3}{*}{64} & Step-Exp & 25 & 2 hrs. & 1.829 & 1.579 & 13.7\% \\
        &&& BN-Full & 6 & 2 hrs. & 1.785 & 1.588 & 11.0\% \\
        &&& Step-Max & 90 & 10 hrs. & \textbf{1.722} & \textbf{1.589} & \textbf{7.8\%} \\
        \hline
        \multirow{3}{*}{SC} & \multirow{3}{*}{7} & \multirow{3}{*}{46} & Step-Exp & 90 & 2 hrs. & 1.394 & 1.334 & 4.3\% \\
        &&& BN-Full & 7 & 2 hrs. & \textbf{1.364} & \textbf{1.334} & \textbf{2.2\%} \\
        &&& Step-Exp & 90 & 10 hrs. & 1.380 & 1.334 & 3.3\% \\
        \hline
      \end{tabular}
    \end{table}

The simplicity of the stepwise models is convenient for implementation, but it is clear that the dominant models are 2d-piecewise-linear models with a logarithmic encoding scheme.  It is not clear that the $\BNFull$ model with its compact formulation is an improvement over directly modeling each polyhedral domain in the LogE method.  In fact, in subsequent computations, we improve the LogE model by incorporating improved bounds.  This step would be complicated to implement in the BN-Full model.

\newcommand{\greencell}{\cellcolor[rgb]{ .886,  .937,  .855}}
\newcommand{\yellowcell}{\cellcolor[rgb]{ 1,  .949,  .8}}

    \begin{table}[ht!]\centering
    \begin{tabular}{|ccc|cccc|}\hline
    \textbf{Model} & $\numBreakpoints$ & \textbf{Time}    & \textbf{AL (7,67)} & \textbf{MS (4,82)} & \textbf{LA (6,64)} & \textbf{SC(7,46)} \\
    \hline
    BN-Full     & 4   & 2 hrs.      & 2.064      & 1.962      & 2.08       & 1.692      \\
    BN-Full     & 6   & 2 hrs.      & 1.812      & 1.752      & 1.785      & 1.405      \\
    BN-Full     & 7   & 2 hrs.      & 1.818      & 1.753      & 1.834      & 1.364      \\
    LogE   & 25  & 2 hrs.      & 1.872      & 1.835      & 1.800      & 1.454      \\
    LogE   & 90  & 2 hrs.      & \textbf{1.797}      & 1.726      &\textbf{ 1.736}      & \textbf{1.366 }     \\
    LogE   & 200 & 2 hrs.      & 1.830      &\textbf{ 1.708 }     & 1.761      & 1.479      \\
    \hline
    PWL       & 10  & 2 hrs.      &  \graycell 2.317      & \graycell 1.723      & \graycell 1.958      & \graycell 1.503      \\
    PWL       & 60  & 2 hrs.      &  2.259      &  1.878      &  2.193      & 1.724      \\
    VA-PWL    & 10  & 2 hrs.      & \graycell 1.787      & \graycell 1.690      & \graycell 1.778      & \graycell 1.411      \\
    VA-PWL    & 60  & 2 hrs.      & \graycell 1.714      & \graycell 1.651      & \graycell 1.772      & \graycell 1.322      \\
    \hline
    Step-Exp. & 10  & 2 hrs.      & 2.171      & 2.049      & 2.177      & 1.692      \\
    Step-Exp. & 25  & 2 hrs.      & 2.131      & 1.812      & 1.829      & 1.484      \\
    Step-Exp. & 55  & 2 hrs.      & 1.794      & 1.899      & 1.749      & 1.403      \\
    Step-Exp. & 70  & 2 hrs.      & 1.767      & 1.898      & 1.743      & 1.377      \\
    Step-Exp. & 90  & 2 hrs.      & 1.787      & 1.864      & 1.879      & 1.394      \\
    Step-Exp. & 90  & 10 hrs.     & 1.764      & 1.862      & 1.716      & 1.380      \\
    \hline
    Step-Max  & 10  & 2 hrs.      & 2.202      & 2.005      & 2.128      & 1.690      \\
    Step-Max  & 25  & 2 hrs.      & 1.982      & 1.806      & 1.874      & 1.479      \\
    Step-Max  & 55  & 2 hrs.      & 1.782      & 1.752      & 1.836      & 1.419      \\
    Step-Max  & 70  & 2 hrs.      & 1.775      & 1.735      & 1.850      & 1.402      \\
    Step-Max  & 90  & 2 hrs.      & 1.825      & 1.728      & 1.794      & 1.433      \\
    Step-Max  & 90  & 10 hrs.     & 1.783      & 1.728      & 1.722      & 1.370      \\
    \hline\hline 
    \multicolumn{3}{|c|}{Best Known Primal Bound}  & 1.586      & 1.664      & 1.589      & 1.334      \\
    \hline
    \end{tabular}
    \caption{Summarized dual bounds from approximation techniques on four states for the Black Representation objective.}
    \end{table}

We proceed by computing improved bounds and implementing the LogE model.  In Figure~\ref{fig:LogE_with_bounds} we show that within 2 hours of computation time, we can certify the optimality of our best solutions (either exactly, or within a few percent).

\begin{figure}[!ht]
\centering
        \includegraphics[scale=0.55, trim=0 360pt 0 0, clip]{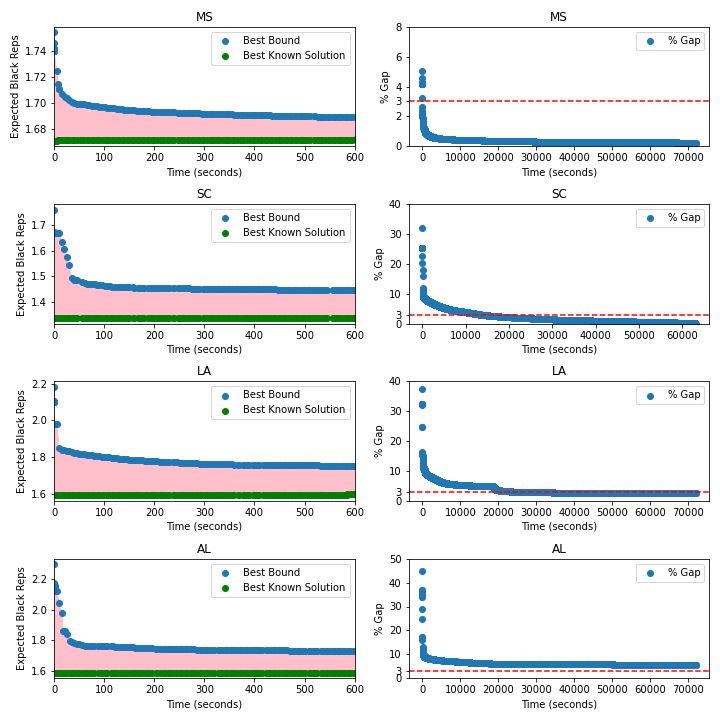}
        
        \includegraphics[scale=0.55, trim=0 0 0  360pt, clip]{BVAP_LogE_with_bounds.png}
    \caption{Progress after implementing LogE with bounds. We add, arbitrarily, a line at a 3\% gap, as a reasonable bound to attain.}
    \label{fig:LogE_with_bounds}
\end{figure}
In Appendix~\ref{appendix:computations} we present more comprehensive results on PWL objective (Table~\ref{tab:ResultsPWL2hr}), VAP Approximated PWL objective (Table~\ref{tab:ResultsPWLApprox}), Stepwise max and Stepwise expected (Tables~\ref{tab:ResultsStepMax},~\ref{tab:ResultsStepExp},~\ref{tab:ResultsLongRun}).  Then we have BN-Full computations for different $\nu$ values (Table~\ref{tab:BNFull}) and LogE for different number of polyhedral domains (Table~\ref{tab:LogE}).

\subsubsection{Gradient Cut Results}
Lastly, we demonstrate that near optimality can be obtained by convexification of the projection into the space of bvap and vap variables, $\y$ and $\z$, via improved bounds and gradient cuts.  We see this progress in Figure~\ref{fig:gradient-cut-progress}.  This achievement is surprising as the problem is nonconvex, but even so, it seems that these cuts are very valuable to proving optimality. 

We do not compare these values to the LogE with bound computations in terms of runtime since the convexification via gradient cuts is given ample time for its procedure.  The resulting cuts can be viewed as preprocessing to optimizing the LogE or any other objective.

\begin{figure}[!ht]
    \centering
        \includegraphics[scale=0.53, trim=0 310pt 0 0, clip]{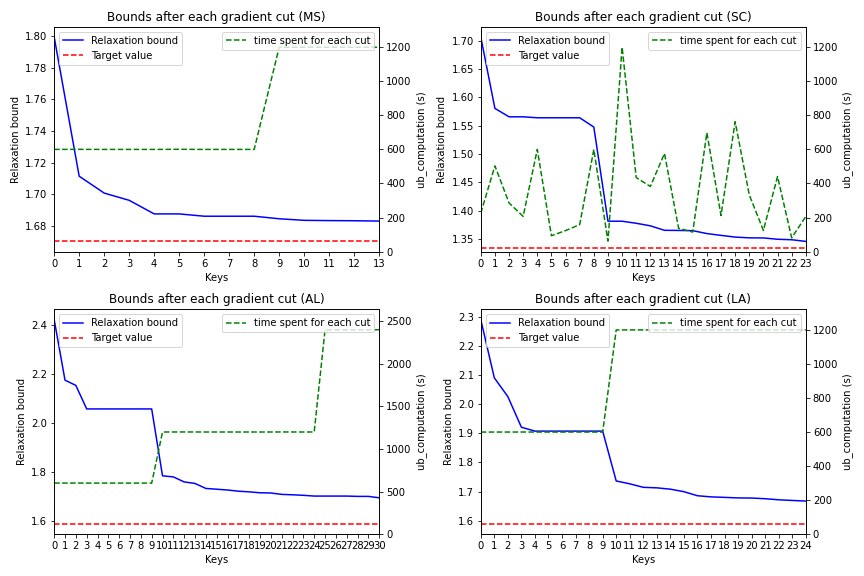}\\
         \vspace{5mm}
    \includegraphics[scale=0.53, trim=0 23pt 0 290pt, clip]{gradient_cuts_with_time.png}

    \caption{Progress of gradient cuts for each state. Allowed time for computation of each gradient cut was increased once insufficient progress was made. Thus, sometimes there are large jumps in improvement due to an increased time allowed to compute a deeper cut. These results do not include solving the IP to optimality after these bounds are added in.}
    \label{fig:gradient-cut-progress}
\end{figure}

\subsubsection{HVAP + BVAP Results}
We conducted preliminary computations for HVAP+BVAP objective using the Step-Exp model.  See Table~\ref{tab:ResultsHVAP}.  Fortunately, the added (minor) complexity to the problem does not seem to significantly affect the difficulty of solving the problem.  Further computations with our advanced models could be run to seek optimality.

\subsection{Multi-Ratio Results}

For multi-ratio results, we limited ourselves to stepwise models for ease of implementation.  We show that 10 hours of computation time can secure near optimality on several easier instances.  We provide these results in Table~\ref{tab:CPVIResultsLongRuns}. Maps of the solutions for South Carolina and Iowa are given in Figure \ref{fig:BestCPVI} under Appendix \ref{app:Maps}.

For improved performance, it seems likely that a LogE variant would be ideal.  This requires a more complicated formulation due to the increased number of variables in each part of the objective function.

    \begin{table}[ht!]
      \centering
      \caption{Results for Step-Exp. model of the Democrat gerrymandering objective (10 hours each)}
        \begin{tabular}{|cccc|ccc|cc|}\hline
        \textbf{State} & $\numDistricts$ & $\numParcels$ & $\numBreakpoints$ & \textbf{Dual Bnd} & \textbf{MIP Obj.} & \textbf{Time/MIP Gap} & \textbf{Primal Bnd} & \textbf{Gap} \\
        \hline
        AL    & 7     & 67    & 90    & 1.919 & 1.665 & 13.2\%  & 1.605 & 16\% \\
        MS    & 4     & 82    & 90    & 1.728 & 1.280 & 25.9\%  & 1.216 & 30\% \\
        LA    & 6     & 64    & 90    & 1.752 & 1.677 & 4.2\%   & 1.625 & 7\%  \\
        SC    & 7     & 46    & 90    & 2.340 & 2.140 & 8.6\%   & 2.055 & 12\% \\
              &       &       &       &       &       &         &       &      \\
        OK    & 5     & 77    & 90    & 0.171 & 0.171 & 1.1 hrs & 0.163 & 5\%  \\
        NM    & 3     & 33    & 90    & 2.218 & 2.218 & 45 min  & 2.136 & 4\%  \\
        KS    & 4     & 105   & 90    & 1.174 & 1.146 & 2.4\%   & 1.095 & 7\%  \\
        IA    & 4     & 99    & 90    & 1.678 & 1.678 & 2.8 hrs & 1.624 & 3\%  \\
        AR    & 4     & 75    & 90    & 0.889 & 0.885 & 0.4\%   & 0.870 & 2\%  \\\hline
        \end{tabular}
      \label{tab:CPVIResultsLongRuns}
    \end{table}

\section{Conclusions}

This study marks an initial exploration into optimizing nonlinear functions over the complex feasible region that characterizes the redistricting problem.

Key takeaways include:
\begin{enumerate}
    \item \textbf{Stepwise Models}: The stepwise models demonstrated their effectiveness at this scale, offering simplicity in coding and interpretation. Simplicity is crucial, especially in political contexts where the underlying mathematics must be communicated to non-mathematicians. While subtle differences exist between the Step-Exp and Step-Max approaches, such as the impact of breakpoint selection on model strength, neither emerged as a clear winner. An adaptive approach to breakpoint selection may provide superior performance over both. Stepwise models were especially useful in multi-ratio problems, where gradient cut techniques proved unexpectedly ineffective. Although further investigation is needed, stepwise approximations still provided reasonably informative bounds in these settings.

    \item \textbf{Logarithmic Size Models}: The logarithmic size models consistently generated near-optimal dual bounds significantly faster than the stepwise models. The LogE and BN-Full models exhibited similar performance, but the LogE model holds a distinct advantage due to its capacity to incorporate tighter bounds on the feasible region, thus refining the choice of vertices in the piecewise-linear approximation. However, this model demands greater detail in its programming. For single-ratio objectives, a combination of variable bounds, gradient cuts, and the LogE formulation proved to be an effective strategy \textemdash though some parameter tuning is necessary to optimize performance for specific instances.

    \item \textbf{Preprocessing Bounds}: Figure~\ref{fig:bvap_ranges} shows that even modest work (that can be done in parallel) can greatly improve known bounds on the core variables. As Figure~\ref{fig:LogE_with_bounds} shows, these bounds are very effective at reducing initial gaps substantially. Preprocessing should be considered a foundational step in any attempt to improve dual bounds for redistricting formulations.

    \item \textbf{Gradient Cuts}: The Gradient-Cuts method emerged as a particularly promising approach. Despite being the most complex method, it delivered bounds comparable to the LogE approach within two hours and approached near-optimal bounds with extended computation time (up to ten hours). This is particularly notable as Gradient-Cuts rely solely on optimizing linear functions over the feasible region. Fine-tuning the runtime per cut and the sequence in which cuts are added may further enhance performance. In multi-ratio problems, however, gradient cuts were surprisingly ineffective and were ultimately not used. The potential for parallelizing gradient cuts \textemdash especially across multiple objective functions \textemdash may provide future gains in efficiency for problems involving trade-offs or Pareto frontiers.
\end{enumerate}

If the goal is to explore a variety of viable districting plans, ensemble methods remain well-suited. The exact optimization techniques described here are intended to help understand theoretical bounds on representational constraints and guide further computational investigations.

\subsection{Future Work}

The study highlighted the considerable challenge of optimizing nonconvex objectives over contiguous maps, compared to linear objectives. Notable improvements were achieved through tighter bounds on objective variables, with the combination of bounds and additional gradient cuts proving highly efficient in optimizing the BVAP objective with a single ratio. However, several implementation questions remain, such as determining the optimal number of cuts, whether to execute them sequentially or in parallel, and how much time to allocate to optimizing bounds for each cut.

For the multi-ratio partisan objective, the performance of these cuts had limited impact on reducing dual bounds. This indicates a need for further research to enhance the effectiveness of these methods in this context.

Future research should also explore alternative approaches to these challenges, such as a spatial branch-and-bound approach (see recent work~\cite{hubner2023spatial}), different piecewise-linear formulations (see recent work~\cite{lyu2023building}), and variations of recent convexification techniques for fractional objective functions~\cite{fractional-programs}. These avenues could offer new insights and potentially more effective strategies for addressing the complexities of the redistricting problem.

\subsection*{Data, Funding, and Disclosures}
\textbf{Data:} The datasets generated and analyzed during the current study were generated from data that is publicly available at \url{https://redistrictingdatahub.org/}.  Formats of this data that were set up for our optimization models are available from the corresponding author upon request.

\doubleblind{\noindent \textbf{Funding:} J. Fravel and R. Hildebrand were partially funded by ONR Grant N00014-20-1-2156.  R. Hildebrand was also partially supported by AFOSR grant FA9550-21-1-0107. Any opinions, findings, and conclusions or recommendations expressed in this material are those of the authors and do not necessarily reflect the views of the Office of Naval Research or the Air Force Office of Scientific Research.  R. Hildebrand and N. Goedert were also partially funded by the 2020-2021 Policy+ grant from Virginia Tech.
}

\noindent \textbf{Conflicts of Interest:} The authors declare that they have no known competing financial interests or personal relationships that could have appeared to influence the work reported in this paper. This statement is made in the interest of full disclosure and to ensure the integrity of the research

\appendix
\section{Gray code and Binarization and a Proof of Proposition \ref{prop:BN-graycode}}
\label{appendix:gray_code}
The angle of an input point  determines its Gray code representation in the model.  
For $(\overline\yy,\overline\zz) \in \Reals^2_+$ define its measured angle $\measuredangle{(\overline\zz,\overline\yy)} \coloneq \tan^{-1}(\overline\yy/\overline\zz)$. When $\overline\zz = 0$, we instead define the angle to be $\sin^{-1}(\overline\yy/ |\overline\yy|)$. 
\begin{proof}[Proof of Proposition \ref{prop:BN-graycode}]
First, consider $\numBreakpoints=1$ and assume that $\measuredangle{(\overline\zz,\overline\yy)} \in \left[0,\tfrac{\pi}{4}\right]$. If $\measuredangle{(\overline\zz,\overline\yy)} \in \left(\tfrac{\pi}{8},\frac{\pi}{4}\right]$ then $i = 0$ and rotating $(\overline\zz,\overline\yy)$ clockwise $\frac{\pi}{8}$ radians results in $\measuredangle{(\xiVar_1,\tilde\etaVar_1)} \in \left(0,\tfrac{\pi}{8}\right]$ with $\tilde\etaVar_1 \geq 0$. No mirroring step is required, so $\etaVar_1 = \tilde\etaVar_1$ and $\delta_1 = 0 = \alphabm^0$. If, on the other hand, $\measuredangle{(\overline\zz,\overline\yy)} \in \left[0,\tfrac{\pi}{8}\right]$ then $i = 1$ and rotating $(\overline\zz,\overline\yy)$ clockwise $\frac{\pi}{8}$ radians results in $\measuredangle{(\xiVar_1,\tilde\etaVar_1)} \in \left[-\tfrac{\pi}{8},0\right]$ with $\tilde\etaVar_1 < 0$. In this case we set $\etaVar_1 = -\tilde\etaVar_1$ and $\delta_1 = 1 = \alphabm^1$ so that $\measuredangle{(\xiVar_1,\etaVar_1)} \in \left[0,\tfrac{\pi}{8}\right]$. Thus, the proposition holds for $\numBreakpoints = 1$. 

We take a break here to consider how our binary representations behave when a digit is appended. Suppose we have a $(\numBreakpoints-1)$-digit code $\deltabm^{\numBreakpoints-1}$ to which we append a $\numBreakpoints^\text{th}$ digit $\delta_\numBreakpoints$. There exists some nonnegative integer $q$ for which $\deltabm^{\numBreakpoints-1}  =  \alphabm^q$; let $\betabm^q$ be the traditional binary representation of $q$. By composing equations \eqref{eq:bindef} and \eqref{eq:alternative-definition} we get an equation for the integer $q'$ represented by the new vector $\deltabm^{\numBreakpoints} = (\deltabm^{\numBreakpoints-1},\delta_\numBreakpoints)$:
    \begin{align}\begin{split}\label{eq:GrayCodeNewi}
    q'  &\EQ  \sum_{k=1}^\numBreakpoints2^{\numBreakpoints-k}\left(\sum_{j=1}^k\delta^\numBreakpoints_j\right)\MOD2 \\
           &\EQ  \sum_{k=1}^{\numBreakpoints-1}2^{\numBreakpoints-k}\beta^q + \left(\sum_{k=1}^\numBreakpoints\delta^\numBreakpoints_k\right)\MOD2 \\
           &\EQ  2q_{\numBreakpoints-1} + \Vert\deltabm^\numBreakpoints\Vert_0\ \MOD2
    \end{split}\end{align}

Now assume, for the sake of induction, that $\numBreakpoints > 1$ and the proposition holds up to $\numBreakpoints-1$. If $\measuredangle{(\xiVar_{\numBreakpoints-1}, \etaVar_{\numBreakpoints-1})} \in \left(\tfrac{1}{2^{\numBreakpoints}}\frac{\pi}{4},\tfrac{1}{2^{\numBreakpoints-1}}\frac{\pi}{4}\right]$; then, much like in the first base case, $\etaVar_\numBreakpoints = \tilde\etaVar_\numBreakpoints$ and $\delta_\numBreakpoints = 0$.  If, on the other hand, $\measuredangle{(\xiVar_{\numBreakpoints-1}, \etaVar_{\numBreakpoints-1})} \in \left[0,\tfrac{1}{2^{\numBreakpoints}}\frac{\pi}{4}\right]$; then, much like in the second base case, $\etaVar_\numBreakpoints = -\tilde\etaVar_\numBreakpoints$ and $\delta_\numBreakpoints = 1$.

In either case, we know that $\measuredangle{(\overline\zz, \overline\yy)} \in \big(\tfrac{\pi}{4} - \tfrac{q+1}{2^{\numBreakpoints-1}} \tfrac{\pi}{4}, \tfrac{\pi}{4} -\tfrac{q}{2^{\numBreakpoints-1}} \tfrac{\pi}{4}\big]$ for some nonnegative integer $q$. By the induction hypothesis, we have $\deltabm^{\numBreakpoints-1} = \alphabm^q$; however, by equation \eqref{eq:GrayCodeNewi}, the new digit $\delta_\numBreakpoints$ changes the integer represented by the Gray code to be $q' = 2q + \Vert\deltabm^\numBreakpoints\Vert_0\ \MOD2$. It remains only to demonstrate that $\measuredangle{(\overline\zz, \overline\yy)} \in \big(\tfrac{\pi}{4} - \tfrac{q'+1}{2^{\numBreakpoints}} \tfrac{\pi}{4}, \tfrac{\pi}{4} -\tfrac{q'}{2^{\numBreakpoints}} \tfrac{\pi}{4}\big]$ which is necessarily a proper subset of $\big(\tfrac{\pi}{4} - \tfrac{q+1}{2^{\numBreakpoints-1}} \tfrac{\pi}{4}, \tfrac{\pi}{4} -\tfrac{q}{2^{\numBreakpoints-1}} \tfrac{\pi}{4}\big]$ since $q'\in \{2q,2q+1\}$.

Each rotation step $k < \numBreakpoints$ that requires a mirroring step $\etaVar_k = \tilde\etaVar_k$ will swap the position of the point $(\xiVar_\numBreakpoints,\etaVar_\numBreakpoints)$ is swapped with respect to the line $\measuredangle(\zz,\yy) = \tfrac{1}{2^{\numBreakpoints}}\frac{\pi}{4}$ which maps, by the repeated rotation steps, to the line $\measuredangle(\zz,\yy) = \tfrac{\pi}{4}-\tfrac{2q+1}{2^{\numBreakpoints}}\frac{\pi}{4}$. In particular, if $\Vert\deltabm^\numBreakpoints\Vert_0$ is even, then $\measuredangle(\overline\zz,\overline\yy)  >  \tfrac{\pi}{4}-\tfrac{2q+1}{2^{\numBreakpoints}}\frac{\pi}{4}$ while $\measuredangle(\overline\zz,\overline\yy)  \leq  \tfrac{\pi}{4}-\tfrac{2q+1}{2^{\numBreakpoints}}\frac{\pi}{4}$ if $\Vert\deltabm^\numBreakpoints\Vert_0$ is odd.

Thus the proposition holds, by induction, for any $\numBreakpoints \geq 1$. 
\end{proof}

\section{Computations}
\label{sec:computations}
\label{appendix:computations}

This appendix presents the best known objective values and dual bounds for our test instances across several model types and redistricting objectives. These tables support our computational study in Section 5 by showing how different formulations perform in terms of approximation accuracy, solution quality, and tractability.

In general, models based on logarithmic and stepwise approximations perform better than naive piecewise-linear methods both in terms of runtime and the tightness of dual bounds. As expected, the more breakpoints used, the better the approximation, though this comes at a computational cost. Our experiments confirm that tighter formulations (especially those incorporating domain-specific preprocessing) can improve both accuracy and convergence.

Below, we briefly summarize key observations from each table group.

\subsection*{PWL and VA-PWL}
Standard PWL models, even with high breakpoint counts, tend to produce relatively loose upper bounds and slower convergence. VA-PWL models improve tractability and sometimes yield tighter primal solutions due to the simplification in structure (by assuming equal population). However, they may deviate from realistic feasibility and should be used with care. Gaps remain significant (10–30\%) for large instances.

    \begin{table}[ht!]
      \centering
      \caption{Selected Results for the PWL approximation (2 hrs each)}
        \begin{tabular}{|cccc|ccc|cc|}\hline
        \textbf{State} & $\numDistricts$ & $\numParcels$ & $\numBreakpoints$ &   \textbf{Dual Bnd} & \textbf{MIP Obj.} & \textbf{MIP Gap} & \textbf{Primal Bnd} & \textbf{Gap} \\
        \hline
            AL    & 7     & 67    & 10    & 2.317 & 1.678 & 28\%  & 1.568 & 32\% \\
                &      &     & 60    & 2.259 & 1.572 & 30\%  & 1.571 & 30\% \\
                \hline
            MS    & 4     & 82    & 10    & 1.723 & 1.689 & 2\%   & 1.670 & 3\% \\
                &      &     & 60    & 1.878 & 1.652 & 12\%  & 1.655 & 12\% \\
                \hline
            LA    & 6     & 64    & 10    & 1.958 & 1.611 & 18\%  & 1.587 & 19\% \\
                &      &     & 60    & 2.193 & 1.568 & 28\%  & 1.575 & 28\% \\
                \hline
            SC    & 7     & 46    & 10    & 1.503 & 1.424 & 5\%   & 1.334 & 11\% \\
                &      &     & 60    & 1.724 & 1.309 & 24\%  & 1.313 & 24\% \\\hline
        \end{tabular}%
      \label{tab:ResultsPWL2hr}%
    \end{table}%

    \begin{table}[ht!]
      \centering
      \caption{Selected results for the VA-PWL approximation (2 hours each).}
        \begin{tabular}{|cccc|>{\columncolor{gray!40}}ccc|cc|}\hline
        \textbf{State} & $\numDistricts$ & $\numParcels$ & $\numBreakpoints$ &   \textbf{Dual Bnd} & \textbf{MIP Obj.} & \textbf{MIP Gap} & \textbf{Primal Bnd} & \textbf{Gap} \\
        \hline
        AL    & 7     & 67    & 10    &  1.787 & 1.772 & 1\%   & 1.387 & 22\% \\
                 &      &     & 60    &  1.714 & 1.674 & 2\%   & 1.389 & 19\% \\
                 \hline
        MS    & 4     & 82    & 10    &  1.690 & 1.682 & 0\%   & 1.466 & 13\% \\
                 &      &     & 60    &  1.651 & 1.643 & 0\%   & 1.467 & 11\% \\
                 \hline
        LA    & 6     & 64    & 10    &  1.778 & 1.764 & 1\%   & 1.436 & 19\% \\
                &      &     & 60    &  1.772 & 1.695 & 4\%   & 1.433 & 19\% \\
                \hline
        SC    & 7     & 46    & 10    &  1.411 & 1.411 & 0\% (12 sec) & 1.142 & 19\% \\
                 &      &     & 60    &  1.322 & 1.322 & 0\%(1.4 min) & 1.142 & 14\% \\\hline
        \end{tabular}%
      \label{tab:ResultsPWLApprox}%
    \end{table}%

\subsection*{Stepwise Models}
Step-Max and Step-Exp approximations show substantial improvements in both upper bounds and solution times. As expected, increasing the number of breakpoints improves accuracy and reduces the primal-dual gap. Notably, the Exp variant often performs better in high-breakpoint, long-run settings. For example, in SC, Step-Exp achieves a 3\% gap in under 4 hours.

    \begin{table}[ht!]
      \centering
      \caption{Selected results from the Step-Max approximation (2 hours each.)}
        \begin{tabular}{|cccc|ccc|cc|}\hline
        \textbf{State} & $\numDistricts$ & $\numParcels$ & $\numBreakpoints$ & \textbf{Dual Bnd} & \textbf{MIP Obj.} & \textbf{MIP Gap} & \textbf{Primal Bnd} & \textbf{Gap} \\
        \hline
        AL    & 7     & 67    & 10    & 2.202  & 2.003  & 9.0\%  & 1.550  & 30\% \\
              &       &       & 70    & 1.775  & 1.635  & 7.9\%  & 1.570  & 12\% \\
        \hline
        MS    & 4     & 82    & 10  & 2.005 & 1.906 & 5.0\% & 1.638 & 18\% \\
              &       &       & 90  & 1.728 & 1.674 & 3.2\% & 1.643 & 5\% \\\hline
        LA    & 6     & 64    & 10  & 2.128 & 2.032 & 4.5\% & 1.518 & 29\% \\
              &       &       & 40  & 1.768 & 1.672 & 5.4\% & 1.562 & 12\% \\\hline
        SC    & 7     & 46    & 10  & 1.690 & 1.690 & 0\% (2.1 min) & 1.218 & 28\% \\
              &       &       & 70  & 1.402 & 1.402 & 0\%(51.3 min) & 1.327 & 5\% \\\hline
        \end{tabular}%
      \label{tab:ResultsStepMax}%
    \end{table}%

    \begin{table}[ht!]
      \centering
      \caption{Selected results for Step-Exp. approximation (2 hours each.)}
        \begin{tabular}{|cccc|ccc|cc|}\hline
        \textbf{State} & $\numDistricts$ & $\numParcels$ & $\numBreakpoints$ & \textbf{Dual Bnd} & \textbf{MIP Obj.} & \textbf{Time/MIP Gap} & \textbf{Primal Bnd} & \textbf{Gap} \\
        \hline
        AL    & 7     & 67    & 10    & 2.171 & 2.059 & 5.2\% & 1.559 & 28\% \\
              &       &       & 70    & 1.767 & 1.639 & 7.3\% & 1.574 & 11\% \\
            \hline
        MS    & 4     & 82    & 10    & 2.049 & 1.932 & 5.7\% & 1.641 & 20\% \\
              &       &       & 40    & 1.781 & 1.754 & 1.5\% & 1.664 & 7\% \\
            \hline
        LA    & 6     & 64    & 10    & 2.177 & 2.074 & 4.7\% & 1.518 & 30\% \\
              &       &       & 70    & 1.743 & 1.628 & 6.6\% & 1.562 & 10\% \\
            \hline
        SC    & 7     & 46    & 10    & 1.692 & 1.692 & 56.2 sec & 1.300 & 23\% \\
              &       &       & 70    & 1.377 & 1.377 & 42.2 min & 1.329 & 4\% \\\hline
        \end{tabular}%
      \label{tab:ResultsStepExp}%
    \end{table}%

\subsection*{Stepwise with longer runtime comparisons}
With a 10-hour runtime and 90 breakpoints, both Step-Max and Step-Exp approximations achieve near-optimal solutions in several cases. This shows that with sufficient runtime, these approximations offer highly competitive bounds and robust convergence across states.

    \begin{table}[ht!]
      \centering
      \caption{Results for the Step-Max and Step-Exp. approximations (10 hrs, 90 Breakpoints).}
      \begin{tabular}{|cccc|ccc|cc|}\hline
        \textbf{State} & $\numDistricts$ & $\numParcels$ & $\numBreakpoints$ & \textbf{Dual Bnd} & \textbf{MIP Obj.} & \textbf{Time/MIP Gap} & \textbf{Primal Bnd} & \textbf{Gap} \\
        \hline
        AL    & 7    & 67     & Step-Exp. & 1.764 & 1.624 & 7.9\% & 1.574 & 11\% \\
              &       &       & Step-Max  & 1.783 & 1.628 & 8.7\% & 1.586 & 11\% \\
        \hline
        MS    & 4    &  82    & Step-Exp. & 1.862 & 1.690 & 9.3\% & 1.646 & 12\% \\
              &       &       & Step-Max & 1.728 & 1.674 & 3.2\% & 1.639 & 5\% \\
        \hline
        LA    & 6    &   64   & Step-Exp. & 1.716 & 1.628 & 5.1\% & 1.583 & 8\% \\
              &       &       & Step-Max & 1.722 & 1.638 & 4.9\% & 1.589 & 8\% \\
        \hline
        SC    &   7  &   46   & Step-Exp. & 1.380 & 1.380 & 3.5 hrs & 1.334 & 3\% \\
              &       &       & Step-Max & 1.370 & 1.370 & 2.8 hrs & 1.326 & 3\% \\\hline
        \end{tabular}
      \label{tab:ResultsLongRun}%
    \end{table}%

    \subsection*{Compact BN and LogE Models}
    BN-based models are competitive with relatively few breakpoints (4–7), though their primal gaps remain higher in most cases. LogE models, while slower to converge, deliver consistently strong dual bounds, especially for 90+ breakpoints. Their strength is most evident in MS and LA, where the dual gaps fall below 6\% with moderate runtime.
    
    \begin{table}[ht!]
      \centering
      \caption{Results for the BN-PWL approximation (2 hours each).}
      
        \begin{tabular}{|cccc|ccc|cc|}\hline
        \textbf{State} & $\numDistricts$ & $\numParcels$ & $\numBreakpoints$ & \textbf{Dual Bnd} & \textbf{MIP Obj.} & \textbf{Time/MIP Gap} & \textbf{Primal Bnd} & \textbf{Gap} \\
        \hline
        AL    & 7     & 67    & 4     & 2.064 & 1.938 & 6.1\% & 1.486 & 28\% \\
              &       &       & 6     & 1.812 & 1.672 & 7.7\% & 1.576 & 13\% \\
              &       &       & 7     & 1.818 & 1.618 & 11.0\% & 1.582 & 13\% \\
        \hline
        MS    & 4     & 82    & 4     & 1.962 & 1.851 & 5.7\% & 1.609 & 18\% \\
              &       &       & 6     & 1.752 & 1.674 & 4.5\% & 1.603 & 8\% \\
              &       &       & 7     & 1.753 & 1.622 & 7.5\% & 1.584 & 10\% \\
        \hline
        LA    & 6     & 64    & 4     & 2.080 & 2.051 & 1.4\% & 1.568 & 25\% \\
              &       &       & 6     & 1.785 & 1.691 & 5.2\% & 1.588 & 11\% \\
              &       &       & 7     & 1.834 & 1.626 & 11.3\% & 1.570 & 14\% \\
        \hline
        SC    & 7     & 46    & 4     & 1.692 & 1.692 & 45 sec & 1.310 & 23\% \\
              &       &       & 5     & 1.498 & 1.498 & 2.2 min & 1.302 & 13\% \\
              &       &       & 6     & 1.405 & 1.405 & 17 min & 1.329 & 5\% \\
              &       &       & 7     & 1.364 & 1.364 & 55 min & 1.334 & 2\% \\\hline
        \end{tabular}%
      \label{tab:BNFull}%
    \end{table}%

    \begin{table}[ht!]
      \centering
      \caption{Results for the LogE approximation of black representation (Most are 2 hours, an * indicates a 15-hour instance).}
        \begin{tabular}{|cccc|ccc|cc|}\hline
        \textbf{State} & $\numDistricts$ & $\numParcels$ & $\numBreakpoints$ & \textbf{Dual Bnd} & \textbf{MIP Obj.} & \textbf{Time/MIP Gap} & \textbf{Primal Bnd} & \textbf{Gap} \\
        \hline
        AL    & 7     & 67    & 25    & 1.872 & 1.744 & 6.8\% & 1.576 & 16\% \\
              &       &       & 90    & 1.797 & 1.615 & 10.1\% & 1.580 & 12\% \\
              &       &       & 90*   & 1.764 & 1.602 & 9.2\% & 1.582 & 10\% \\
              &       &       & 200   & 1.830 & 1.531 & 16.4\% & 1.510 & 17\% \\
        \hline
        MS    & 4     & 82    & 25    & 1.835 & 1.760 & 4.1\% & 1.585 & 14\% \\
              &       &       & 90    & 1.726 & 1.675 & 2.9\% & 1.642 & 5\% \\
              &       &       & 90*   & 1.728 & 1.629 & 5.7\% & 1.605 & 7\% \\
              &       &       & 200   & 1.708 & 1.617 & 5.3\% & 1.599 & 6\% \\
        \hline
        LA    & 6     & 64    & 25    & 1.800 & 1.754 & 2.6\% & 1.560 & 13\% \\
              &       &       & 90    & 1.736 & 1.610 & 7.3\% & 1.556 & 10\% \\
              &       &       & 90*   & 1.707 & 1.639 & 4.0\% & 1.605 & 6\% \\
              &       &       & 200   & 1.761 & 1.566 & 11.1\% & 1.545 & 12\% \\
        \hline
        SC    & 7     & 46    & 25    & 1.454 & 1.454 & 45 sec & 1.290 & 11\% \\
              &       &       & 90    & 1.366 & 1.366 & 2.2 min & 1.328 & 3\% \\
              &       &       & 200   & 1.479 & 1.233 & 17 min & 1.216 & 18\% \\\hline
        \end{tabular}%
      \label{tab:LogE}%
    \end{table}%

\subsection{HVAP + BVAP Objective}
When optimizing for combined minority representation, primal bounds are noticeably weaker, especially at low breakpoint counts. This is expected given the increased variance in the representation function and the larger number of small-population nodes that confound optimization. Still, Step-Exp models with 60 breakpoints reduce the primal-dual gap significantly compared to 25-point runs.

    \begin{table}[ht!]
    \centering
    \caption{Results for the Step-Exp. approximation of the HVAP+BVAP objective (2 hours each).}
    \begin{tabular}{|cccc|rrc|rc|}\hline
    \textbf{State} & $\numDistricts$ & $\numParcels$ & $\numBreakpoints$ & \multicolumn{1}{c}{\textbf{Dual Bnd}} & \multicolumn{1}{c}{\textbf{MIP Obj.}} & \textbf{Time/MIP Gap} & \multicolumn{1}{c}{\textbf{Primal Bnd}} & \textbf{Gap} \\
    \hline
    AL    & 7     & 67    & 25    & 1.565 & 1.276 & 18.5\% & 0.891 & 43\% \\
          &       &       & 60    & 1.360 & 1.242 & 8.7\% & 1.067 & 22\% \\
    \hline
    MS    & 4     & 82    & 25    & 1.722 & 1.487 & 13.6\% & 1.128 & 35\% \\
          &       &       & 60    & 1.635 & 1.401 & 14.3\% & 1.250 & 24\% \\
    \hline
    LA    & 6     & 64    & 25    & 1.550 & 1.456 & 6.0\% & 0.957 & 38\% \\
          &       &       & 60    & 1.308 & 1.186 & 9.3\% & 0.984 & 25\% \\
    \hline
    SC    & 7     & 46    & 25    & 1.114 & 1.114 & 1.1 min & 0.763 & 32\% \\
          &       &       & 60    & 1.004 & 1.004 & 1.4 min & 0.880 & 12\% \\\hline
    \end{tabular}%
    \label{tab:ResultsHVAP}%
\end{table}%

\section{Variable ranges}
\label{sec:variable_ranges}
Variable ranges were computed for our four main instances.  These helped improve the LogE formulation.  We present those bounds in Figure~\ref{fig:bvap_ranges}.
\newlength{\CropAmountLeft}
\setlength{\CropAmountLeft}{55pt}
\newlength{\CropAmount}
\setlength{\CropAmount}{65pt}
\newcommand{\ImageScale}{0.39}

\begin{figure}[ht!]
    \centering{
    \includegraphics[scale=\ImageScale,trim={\CropAmountLeft} 0 {\CropAmount} 0,clip]{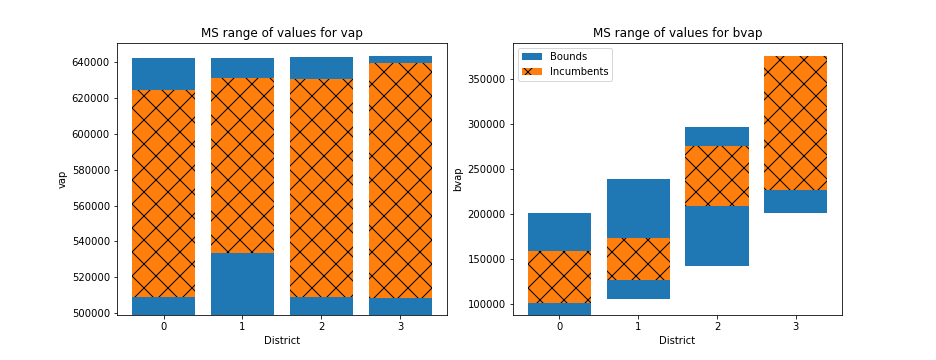}
    \includegraphics[scale=\ImageScale,trim={\CropAmountLeft} 0 {\CropAmount} 0,clip]{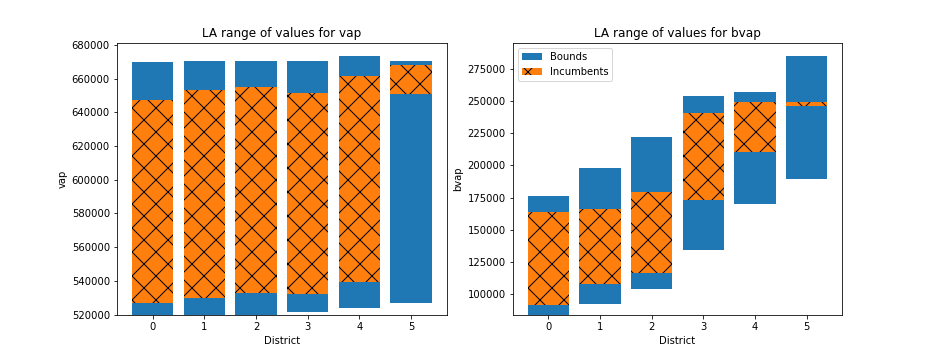}\\
    \includegraphics[scale=\ImageScale,trim={\CropAmountLeft} 0 {\CropAmount} 0,clip]{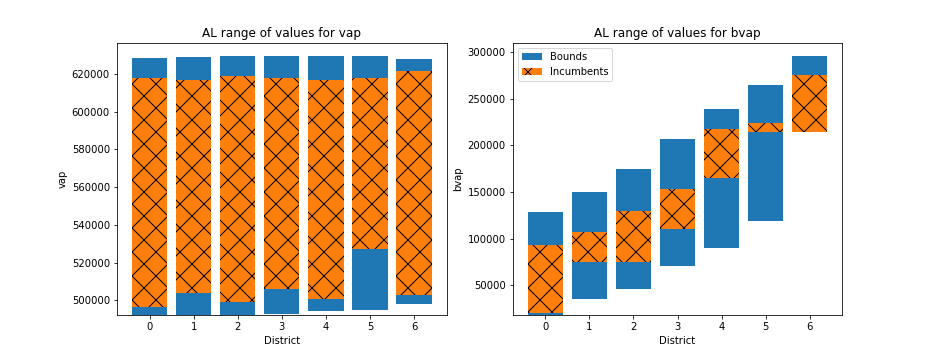}
    \includegraphics[scale=\ImageScale,trim={\CropAmountLeft} 0 {\CropAmount} 0,clip]{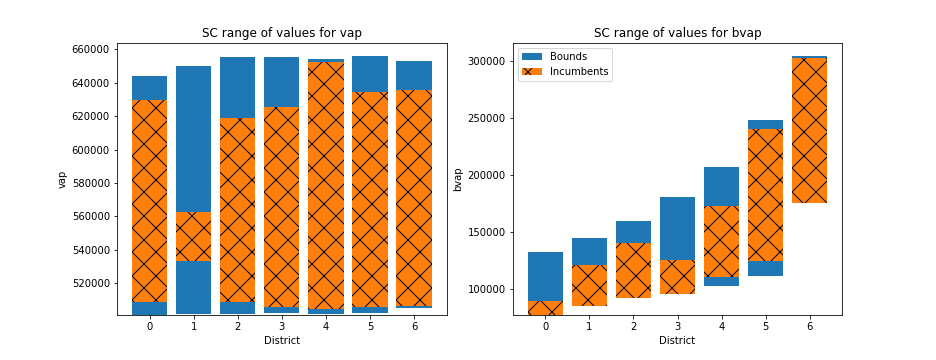}
    }
    \caption{Ranges computed for BVAP and vap given an ordering on the districts with respect to BVAP. These computations were done with a limit of 10 minutes; hence, for many of the computations, optimality was not reached.}
    \label{fig:bvap_ranges}
\end{figure}

\pagebreak
\section{Maps of best known primal solutions}\label{app:Maps}
Here we display several of our best solutions as maps.
    \begin{figure}[ht!]
    \centering
        \begin{subfigure}{.48\textwidth}
        \includegraphics[width=\textwidth,trim={5cm 5cm 5cm 5cm},clip]{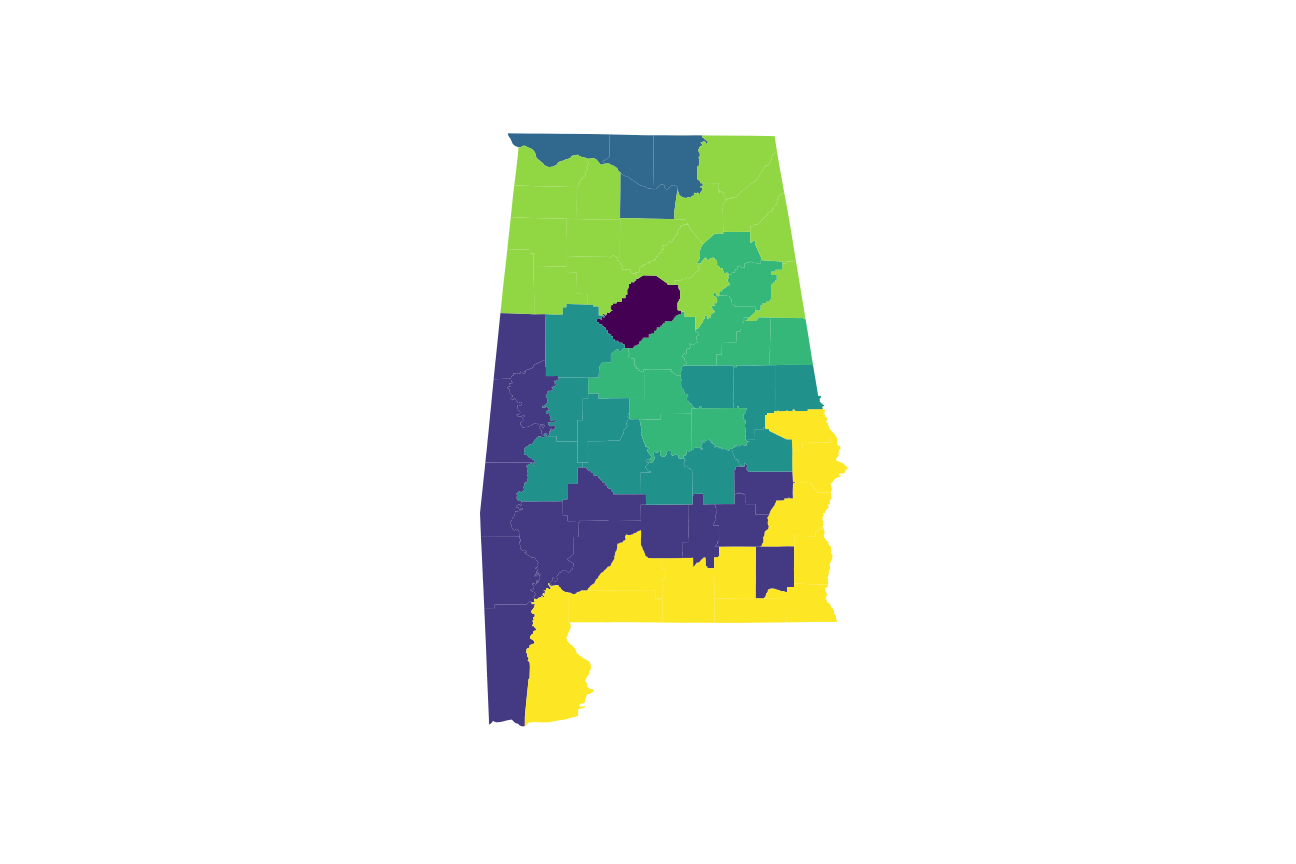}
        \caption{Alabama under Step-Max with $\ell=90$ for ten hours achieved a primal bound of 1.586.}
        \end{subfigure}~
        \begin{subfigure}{.48\textwidth}
        \includegraphics[width=\textwidth,trim={5cm 5cm 5cm 5cm},clip]{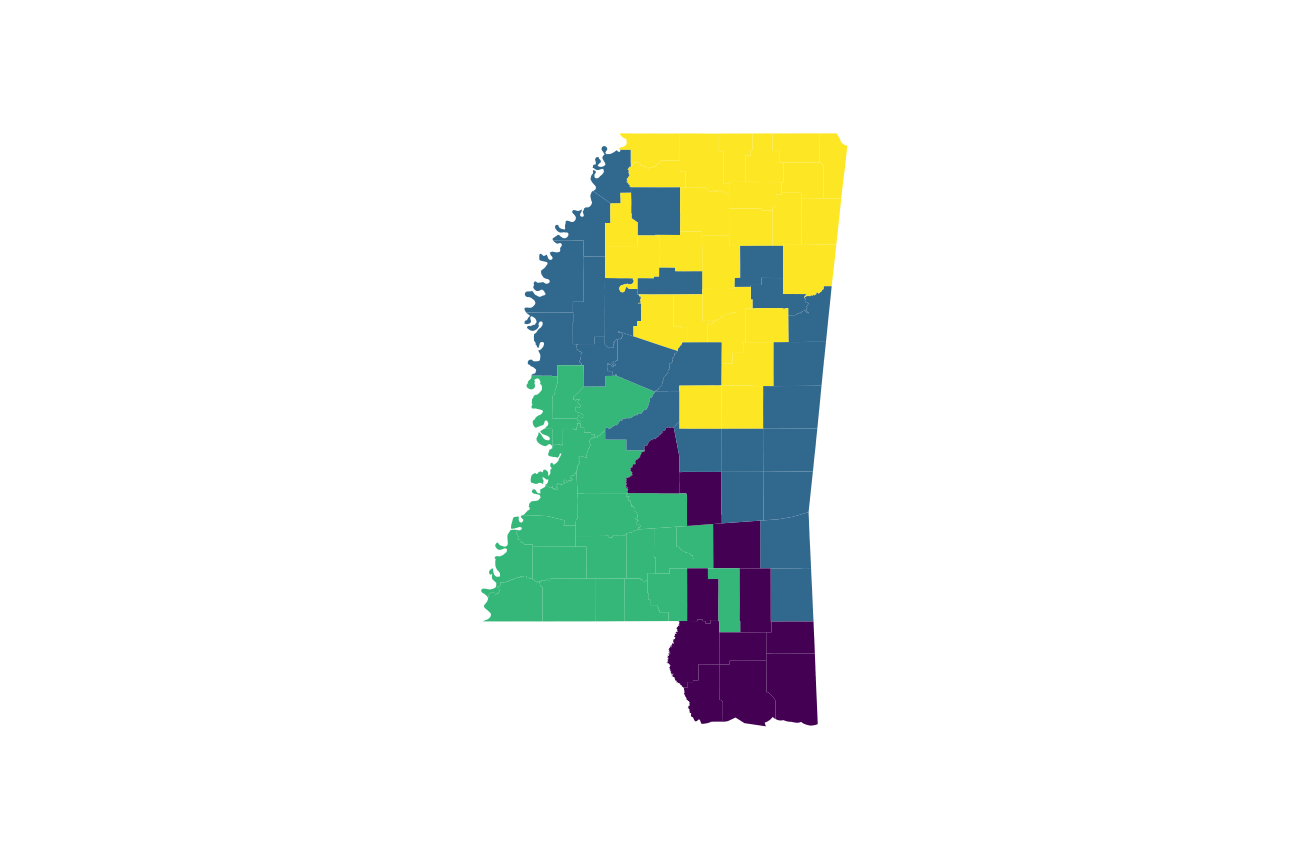}
        \caption{Mississippi under Step-Exp with $\ell=40$ for 2 hours achieved a primal bound of 1.664.}
        \end{subfigure}\\
        \begin{subfigure}{.48\textwidth}
        \includegraphics[width=\textwidth,trim={4cm 4cm 4cm 4cm},clip]{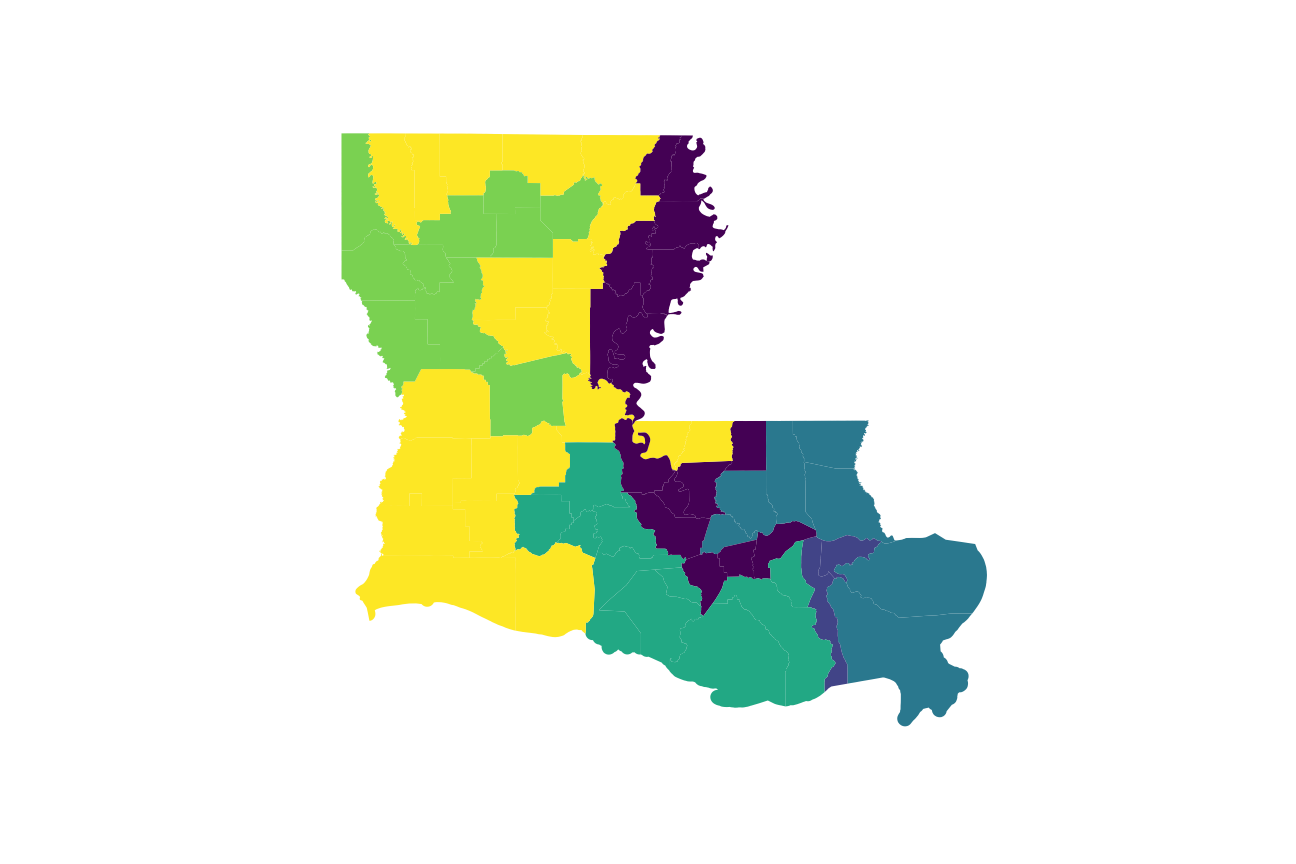}
        \caption{Louisiana under Step-Max with $\ell=90$ for ten hours achieved a primal bound of 1.589}
        \end{subfigure}~
        \begin{subfigure}{.48\textwidth}
        \includegraphics[width=\textwidth,trim={4cm 4cm 4cm 4cm},clip]{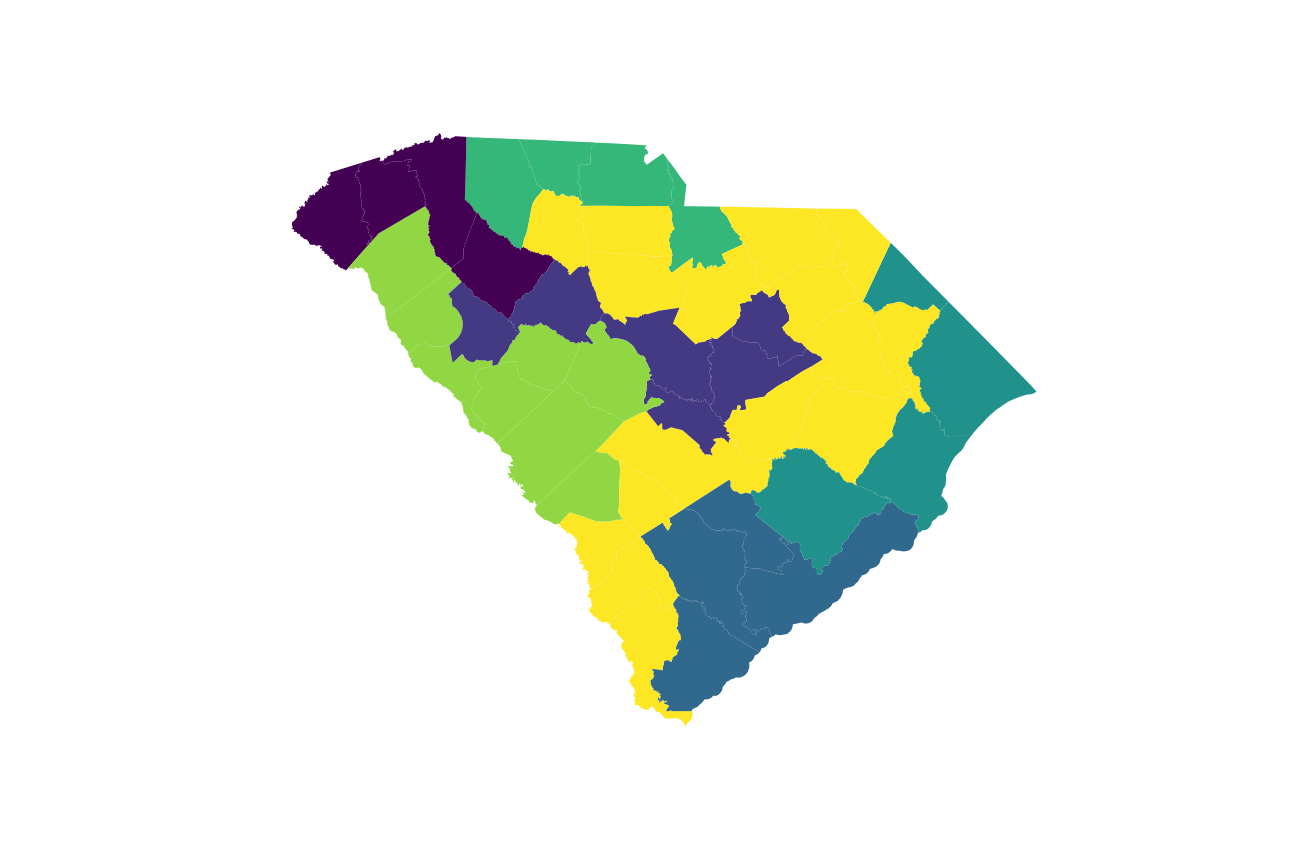}
        \caption{South Carolina under BN-Full with $\ell=7$ for two hours achieved a primal bound of 1.334.}
        \end{subfigure}~
    \caption{Maps of the best found primal BR-Redistricting solutions given in Table \ref{tab:BestInstances}.}
    \label{fig:BestInstances}
    \end{figure}

    \begin{figure}
    \centering
        \begin{subfigure}{.48\textwidth}
        \includegraphics[width=\textwidth,trim={6cm 6cm 6cm 6cm},clip]{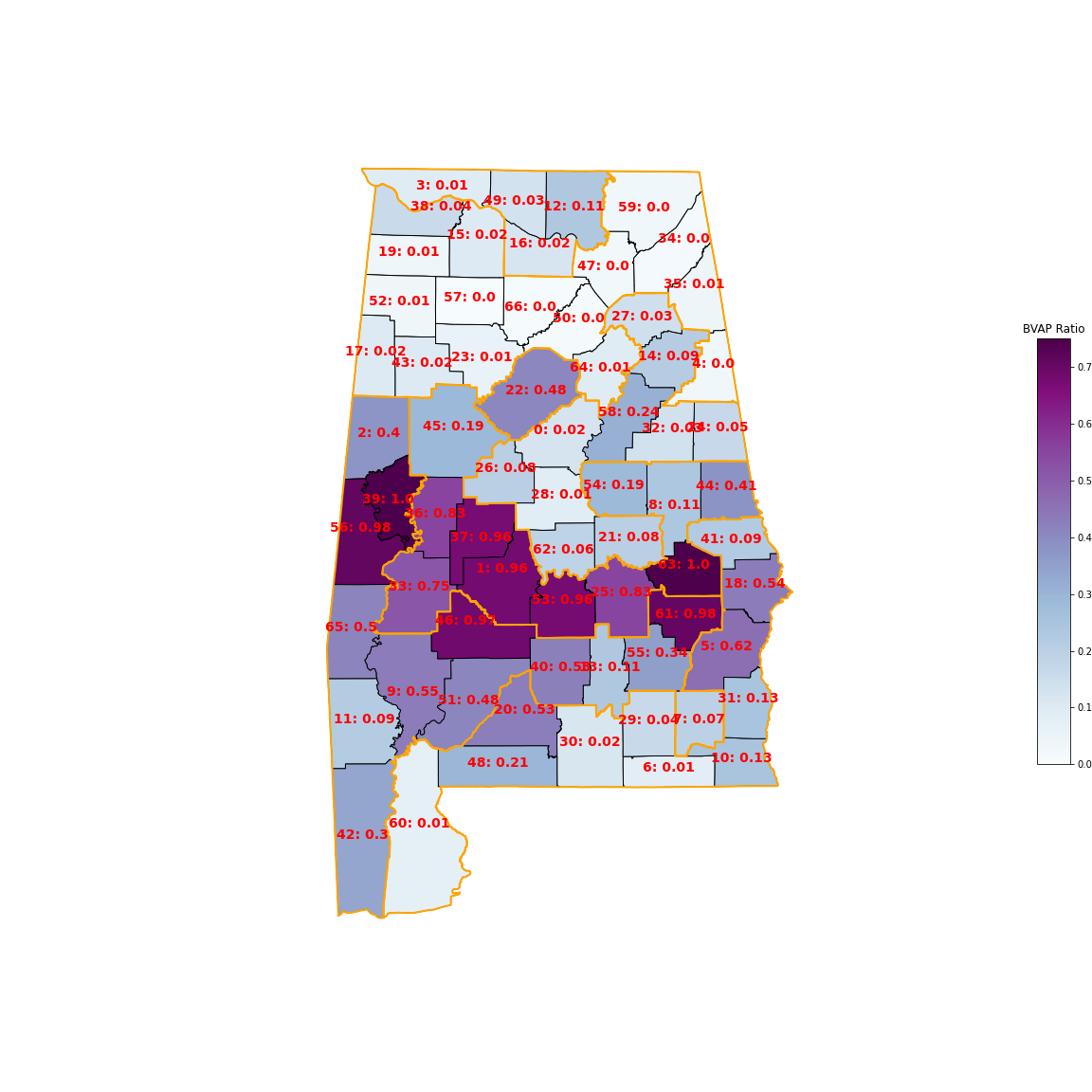}
        \caption{Alabama}
        \end{subfigure}~
        \begin{subfigure}{.48\textwidth}
        \includegraphics[width=\textwidth,trim={6cm 6cm 6cm 6cm},clip]{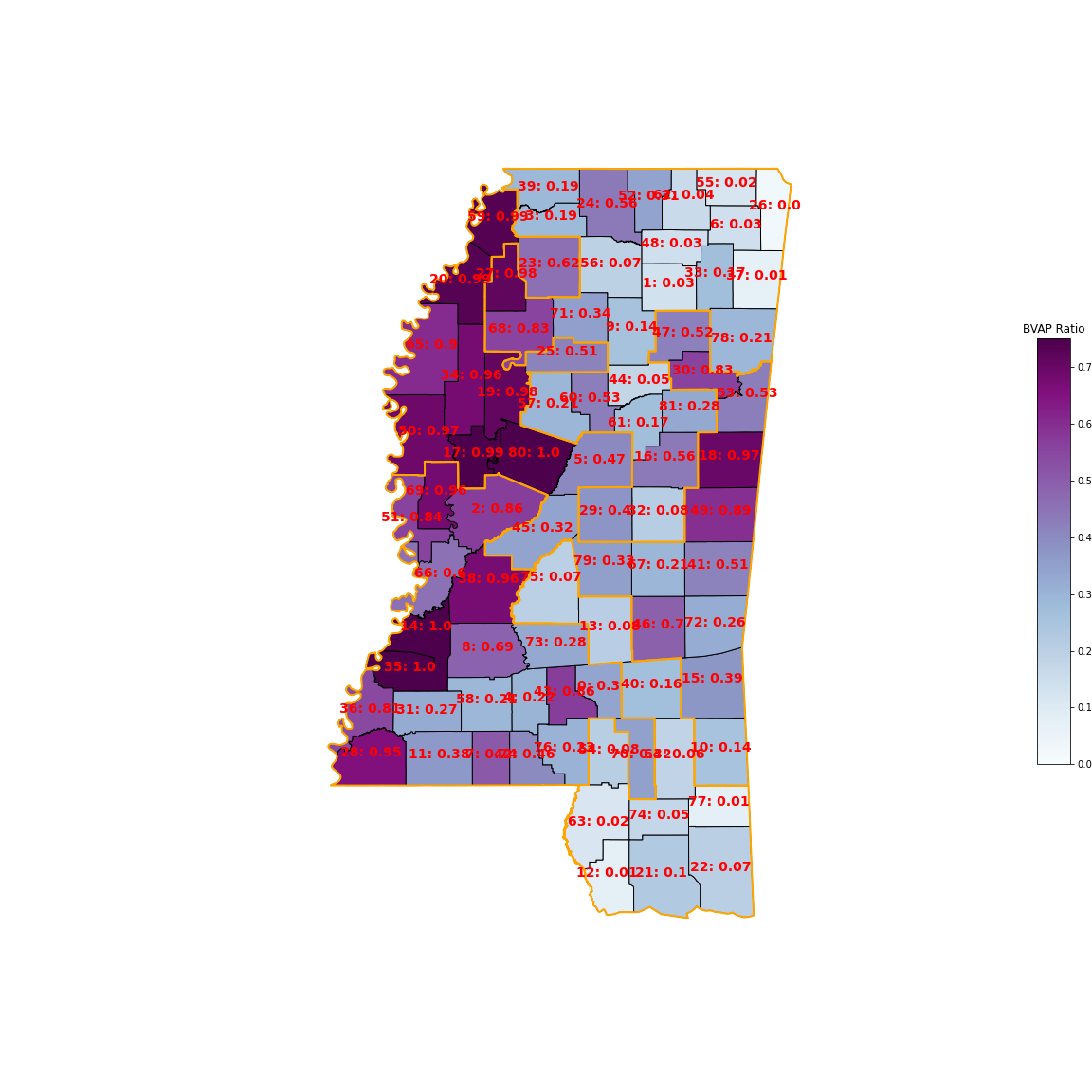}
        \caption{Mississippi}
        \end{subfigure}\\
        \begin{subfigure}{.48\textwidth}
        \includegraphics[width=\textwidth,trim={4cm 4cm 4cm 4cm},clip]{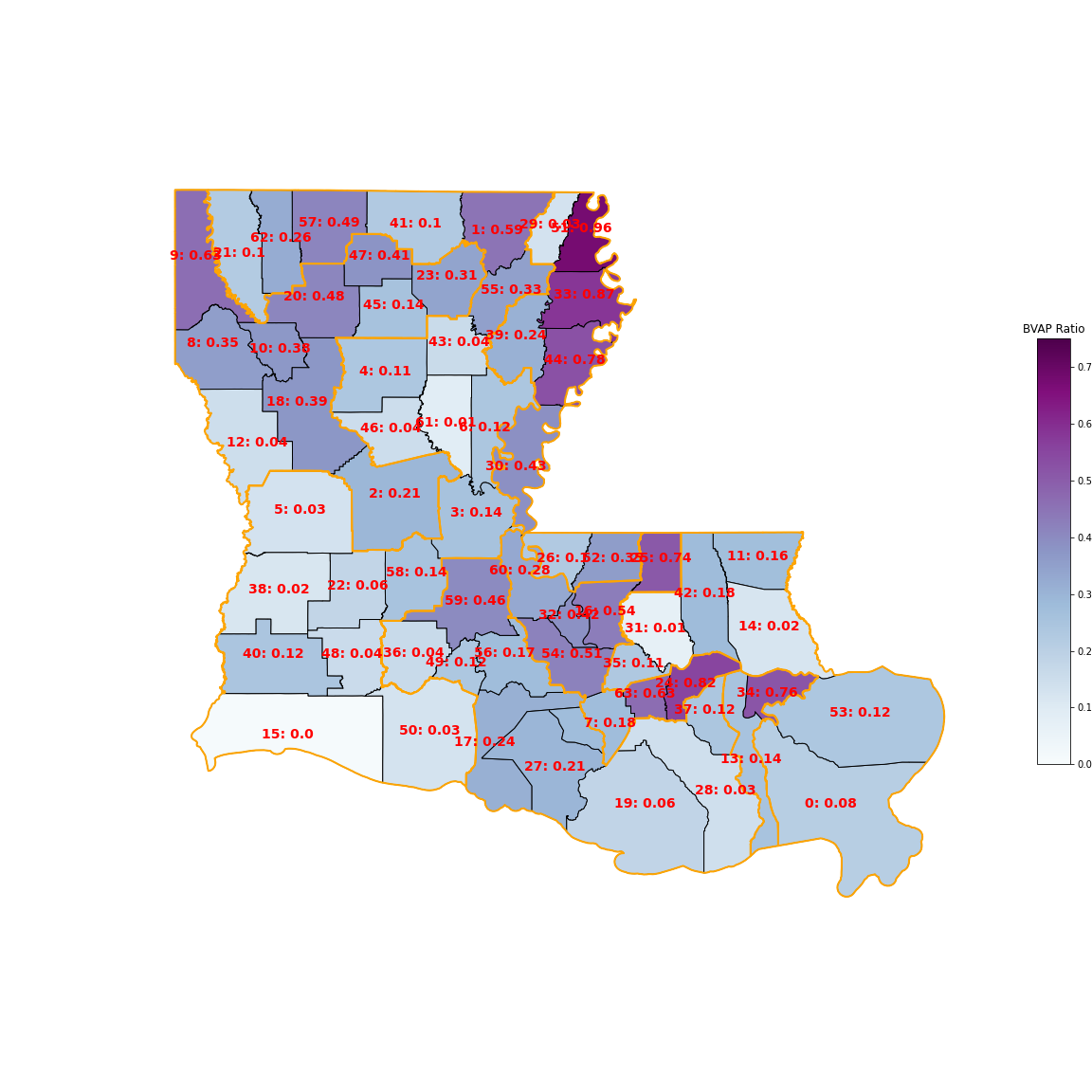}
        \caption{Louisiana}
        \end{subfigure}~
        \begin{subfigure}{.48\textwidth}
        \includegraphics[width=\textwidth,trim={1cm 1cm 4cm 1cm},clip]{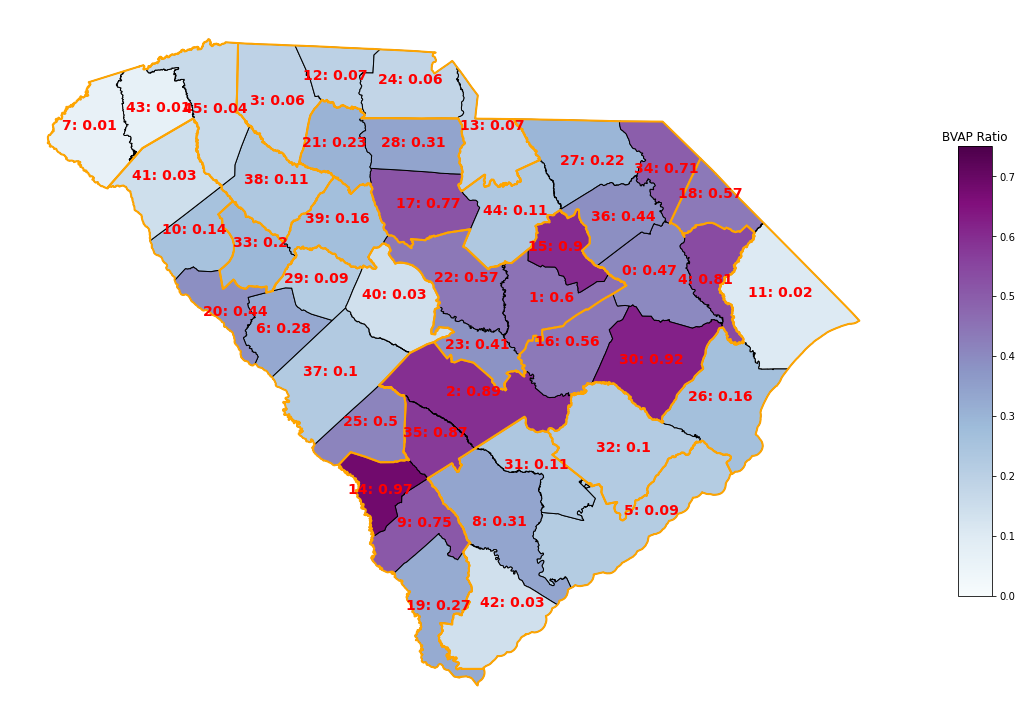}
        \caption{South Carolina}
        \end{subfigure}~
    \caption{County level BVAP/VAP heatmaps. Overlayed in orange are the districts from Figure \ref{fig:BestInstances}.}
    \label{fig:SCCountyVis}
    \end{figure}

    \begin{figure}
    \centering
        \begin{subfigure}{.48\textwidth}
        \includegraphics[width=\textwidth,trim={4cm 4cm 4cm 4cm},clip]{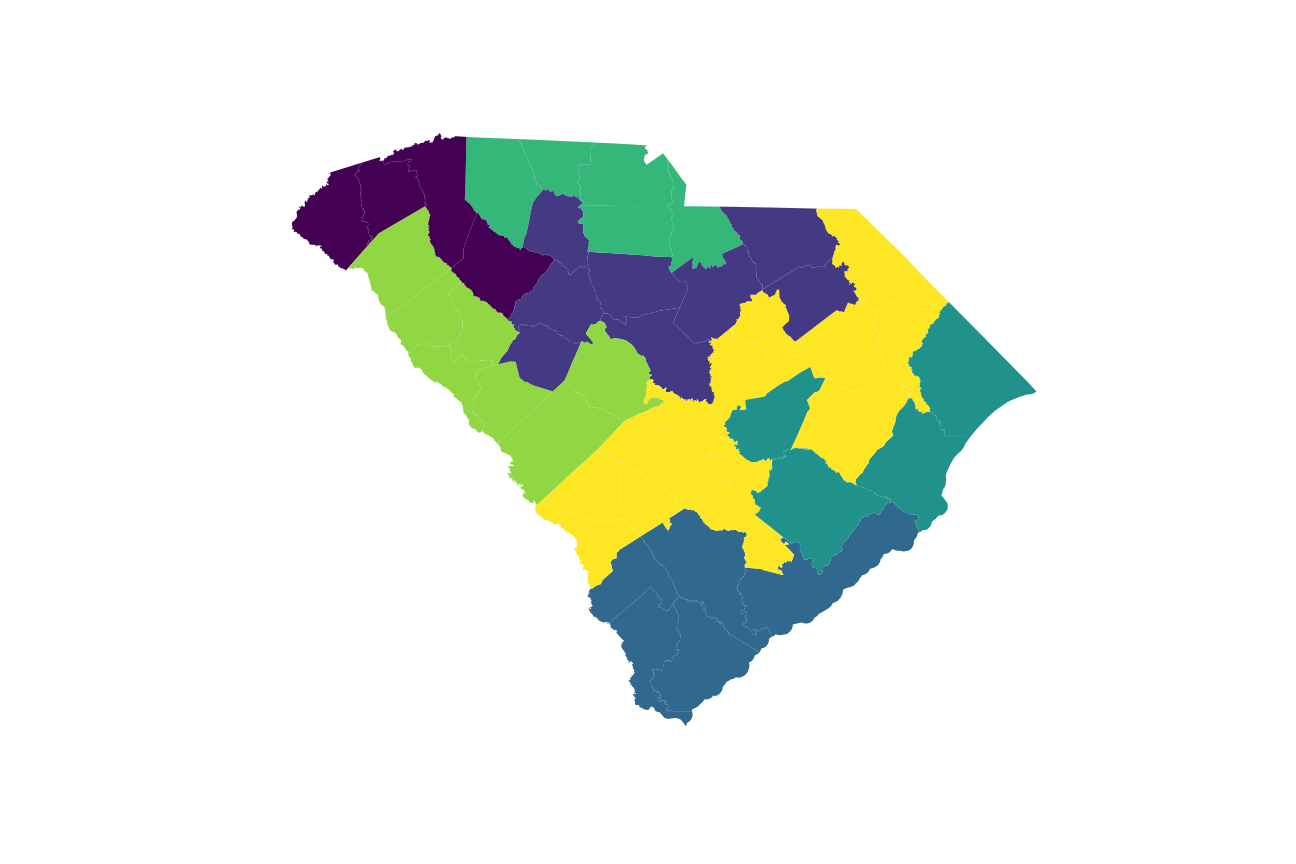}
        \caption{With a primal bound of 2.055, the best found CPVI solution for South Carolina looks very similar to its best Black Representatives solution.\footnotemark}
        \end{subfigure}~
        \begin{subfigure}{.48\textwidth}
        \includegraphics[width=\textwidth,trim={2cm 2cm 2cm 2cm},clip]{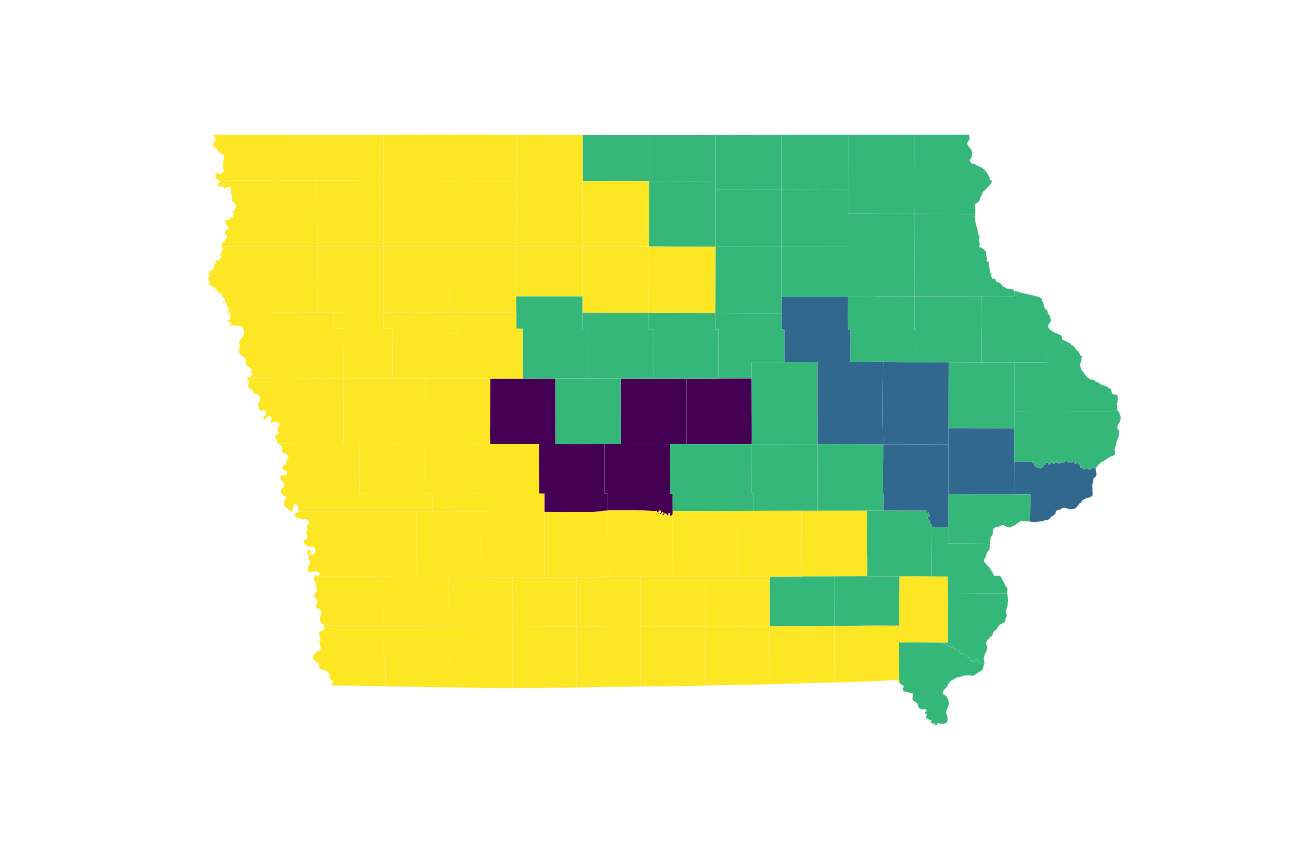}
        \caption{For Iowa, which is districted at the county level, this map is potentially an optimal Democrat gerrymander. It has a primal bound of 1.624 and a dual bound of 1.678 expected Democrat representatives}
        \end{subfigure}
    \caption{The best found CPVI-Redistricting solutions for South Carolina and Iowa as described in Table \ref{tab:CPVIResultsLongRuns}.}
    \label{fig:BestCPVI}
    \end{figure}
    \footnotetext{This map bears visual resemblance to the configuration at issue in \textit{Alexander v. South Carolina State Conference of the NAACP} (2024), a recent U.S. Supreme Court case concerning racial gerrymandering. We emphasize that our map is not intended to reflect legal standards or compliance, but rather to illustrate the outcome of an optimization model under a specific objective.}

\pagebreak
\section{Error derivations}
\label{app:error}
We provide a simple calculus result to discuss the error in linearization. 
    \begin{theorem}[{\cite[Theorem 3.3]{Burden2015-rt}}]
    Suppose \( x_0, x_1, \dots, x_n \) are distinct numbers in the interval \([a, b]\) and \( f \in C^{n+1}[a, b] \). Then, for each \(x\) in \([a, b]\), a number \( \xi(x) \) (generally unknown) between \( x_0, x_1, \dots, x_n \), and hence in \( (a, b) \), exists such that:
    \[
    f(x) = P(x) + \frac{f^{(n+1)}(\xi(x))}{(n+1)!} (x - x_0)(x - x_1) \cdots (x - x_n),
    \]
    where \( P(x) \) is the interpolating polynomial given satisfying $P(x_i) = f(x_i)$ for $i=0,1, \dots, n$.
    \end{theorem}
We adapt this to a linear approximation.

\begin{corollary}[Error bound for linear interpolation]
\label{prop:taylor}
Let \( f(x) \) be a function defined on \([a, b]\) with \( f \in C^2[a, b] \). Then the difference between \( f(x) \) and the line connecting the points \( (a, f(a)) \) and \( (b, f(b)) \) is bounded by:
\[
|f(x) - \ell(x)| \leq \frac{\|f''\|_\infty}{8} (b - a)^2
\]
where \( \ell(x) = \frac{f(b) - f(a)}{b - a} (x - a) + f(a) \) is the equation of the line connecting \( (a, f(a)) \) and \( (b, f(b)) \), and \( \|f''\|_\infty = \max_{\xi \in [a, b]} |f''(\xi)| \).
\end{corollary}

We next apply this computation to get bounds on the error of any linearization with respect to our main objective function.

\begin{remark}
Given the function \( \brObj(r) = \frac{1}{\sqrt{2\pi}} \int_{-\infty}^{\beta \cdot r - \beta_0} e^{-\frac{1}{2}t^2} \, dt \), representing the CDF of a normal distribution with a shifted upper limit, the first and second derivative with respect to \( r \) is:

\[
\brObj'(r) = \frac{\beta}{\sqrt{2\pi}} e^{-\frac{1}{2}(\beta \cdot r - \beta_0)^2},
\ \ \ 
\brObj''(r) = -\frac{\beta^2 (\beta \cdot r - \beta_0)}{\sqrt{2\pi}} e^{-\frac{1}{2}(\beta \cdot r - \beta_0)^2}.
\]

With $\beta = 6.826, \beta_0 = 2.827$, we compute the error, according to Proposition~\ref{prop:taylor},  of a piecewise linear approximation with   $n$ uniformly distributed breakpoints.
$$
\begin{array}{cllll}
 \text{n} & \text{Single Error} & \times 4 & \times 6 & \times 7 \\
 \hline
 10 & 0.0140931 & 0.0563725 & 0.0845588 & 0.0986519 \\
 25 & 0.0022549 & 0.0090196 & 0.0135294 & 0.0157843 \\
 60 & 0.000391476 & 0.0015659 & 0.00234885 & 0.00274033 \\
 90 & 0.000173989 & 0.000695957 & 0.00104394 & 0.00121792 \\
 120 & 0.0000978689 & 0.000391476 & 0.000587214 & 0.000685082 \\
 200 & 0.0000352328 & 0.000140931 & 0.000211397 & 0.00024663 \\
\end{array}
$$
\end{remark}

Given that each state we worked with has at most 7 districts, using 60 breakpoints ensures our bounds remain accurate within about 0.003, while using 90 or more breakpoints maintains accuracy to 0.0013. However, these error bounds are quite conservative and could be refined with more thorough analysis. We believe that the actual bounds provided by our LogE and BN-Full methods are likely much tighter than this simple analysis indicates.

\section{LogE Analysis and Comparison to PWL}
\label{appendix:proof-LogE}

We will prove Theorem~\ref{tech:LogE:Conjecture}.  But along the way, we will study the properties of the logE formulation.

We begin with a standard theorem about barycentric coordinates. Here, for any vectors \(\bm{a}, \bm{b} \in \mathbb{R}^2\), the symbol \(\bm{a} \times \bm{b}\) denotes the scalar 2D cross product:
\(
\bm{a} \times \bm{b} := a_1 b_2 - a_2 b_1.
\)

\begin{theorem}[Barycentric Coordinates via Cross Products]
\label{thm:barycentric_cross}
Let \(\bm{v}^1, \bm{v}^2, \bm{v}^3 \in \mathbb{R}^2\) be three non-collinear points forming a triangle, and let \(\bm{x} \in \mathbb{R}^2\) be any point in the plane. Then there exist unique scalars \(\lambda_1, \lambda_2, \lambda_3 \in \mathbb{R}\) such that
\[
\bm{x} = \lambda_1 \bm{v}^1 + \lambda_2 \bm{v}^2 + \lambda_3 \bm{v}^3
\quad \text{and} \quad
\lambda_1 + \lambda_2 + \lambda_3 = 1.
\]
These scalars are the barycentric coordinates of \(\bm{x}\) with respect to the triangle \((\bm{v}^1, \bm{v}^2, \bm{v}^3)\), and are given by:
\[
\lambda_1 = \frac{(\bm{v}^2 - \bm{v}^3) \times (\bm{x} - \bm{v}^3)}{(\bm{v}^2 - \bm{v}^3) \times (\bm{v}^1 - \bm{v}^3)},
\quad
\lambda_2 = \frac{(\bm{v}^3 - \bm{v}^1) \times (\bm{x} - \bm{v}^3)}{(\bm{v}^2 - \bm{v}^3) \times (\bm{v}^1 - \bm{v}^3)},
\quad
\lambda_3 = 1 - \lambda_1 - \lambda_2.
\]

\end{theorem}

We are particularly interested in the case where $v^2$ is a scaled version of $v^1$.
\begin{figure}[h!]
\begin{center}
\begin{tikzpicture}[scale=2]
    \draw[->] (-0.2,0) -- (2.2,0) node[right] {$x_1$};
    \draw[->] (0,-0.2) -- (0,2.2) node[above] {$x_2$};
    
    \coordinate (v1) at (1,0.5);
    \coordinate (v2) at (2,1);    %
    \coordinate (v3) at (0.5,2);
    
    \draw[blue,thick] (0,0) -- (v1);
    \draw[blue,thick] (0,0) -- (v2);
    \draw[blue,thick] (0,0) -- (v3);
    
    \fill[blue] (v1) circle (1.5pt);
    \node[below left] at (v1) {$\bm{v}^1$};
    \fill[blue] (v2) circle (1.5pt);
    \node[below left] at (v2) {$\bm{v}^2$};
    \fill[blue] (v3) circle (1.5pt);
    \node[below left] at (v3) {$\bm{u}$};
    
    \draw[dashed,red,thick] (v1) -- (v2) -- (v3) -- cycle;
    
    \coordinate (x) at (1.1667,1.1667);
    
    \fill[green] (x) circle (1.5pt);
    \node[below left] at (x) {$x$};
\end{tikzpicture}
\end{center}
\caption{The point $\bm{x}$  can be expressed in Barycentric Coordinates via $\bm{v}^1, \bm{v}^2, \bm{u}$.}
\end{figure}

\begin{corollary}[Barycentric Coordinates When \(\bm{v}^2 = \alpha \bm{v}^1\)]
\label{cor:degenerate_v2}
Let \(\bm{v}^1, \bm{u} \in \mathbb{R}^2\) be fixed and linearly independent, and suppose \(\bm{v}^2 = \alpha \bm{v}^1\) for some \(\alpha > 0\). Then for any point \(\bm{x} \in \mathbb{R}^2\), the barycentric coordinates \((\lambda_1, \lambda_2, \mu)\) with respect to the triangle \((\bm{v}^1, \bm{v}^2, \bm{u})\) are:
\begin{equation}
\label{eq:lambda3}
\lambda_1 = \frac{1 - \mu}{1 + \alpha}, \quad
\lambda_2 = \frac{\alpha (1 - \mu)}{1 + \alpha}, \quad \text{ and }\quad 
\mu = \frac{\bm{v}^1 \times \bm{x}}{\bm{v}^1 \times \bm{u}}.
\end{equation}

These satisfy
\[
\bm{x} = \lambda_1 \bm{v}^1 + \lambda_2 \bm{v}^2 + \mu \bm{u}, \quad \lambda_1 + \lambda_2 + \mu  = 1.
\]

\noindent In particular, \(\mu\) is independent of \(\alpha\), depending only on the cross products involving \(\bm{v}^1, \bm{v}^3\), and \(\bm{x}\).
\end{corollary}

\begin{remark}
\label{remark:scaling}
The coordinate \(\mu\) is independent of both the scalar \(\alpha > 0\) and the norm \(\|\bm{v}^1\|_2\). It depends only on the directions of \(\bm{v}^1\) and \(\bm{u}\), and the position of the point \(\bm{x}\) relative to them. This makes \(\mu\) invariant under positive rescaling of \(\bm{v}^1\).  However, $\mu$ is inversely proportional to $\|\bm{u}\|_2$.   
\end{remark}

\subsection{Linear Program with Four Points}

We now present a linear program whose feasible set is defined by four points 
\[
\bm{v}^1, \quad \bm{v}^2 = \alpha\,\bm{v}^1, \quad \bm{v}^3, \quad \bm{v}^4 = \beta\,\bm{v}^3 \quad \text{for } \alpha,\beta > 1.
\]
Observe that \(\bm{v}^2\) lies on the ray from the origin through \(\bm{v}^1\), and \(\bm{v}^4\) lies on the ray from the origin through \(\bm{v}^3\). Hence the convex hull 
\(\mathrm{conv}(\bm{v}^1, \bm{v}^2, \bm{v}^3, \bm{v}^4)\) remains a quadrilateral in general (unless some further degeneracy holds).

\begin{proposition}
\label{prop:LP-4points-basic-solution}
Consider the linear program
\[
z(\bm{x}) 
\;:=\;
\max_{\{\lambda_i\}}\Bigl\{
  f^- (\lambda_1 + \lambda_2) 
  + f^+(\lambda_3 + \lambda_4)
  \;\Bigm|\;
  \lambda_i \ge 0, \;
  \sum_{i=1}^4 \lambda_i = 1,\;
  \bm{x} \;=\; \sum_{i=1}^4 \lambda_i \bm{v}^i
\Bigr\},
\]
where \(\bm{v}^2 = \alpha \bm{v}^1\) and \(\bm{v}^4 = \beta \bm{v}^3\) for some \(\alpha,\beta > 1\) and $\bm{x} \in \conv(\bm{v}^1, \bm{v}^2,\bm{v}^3, \bm{v}^4)$. Then there exists an optimal solution such that at least one of the \(\lambda_i\) is zero.
\end{proposition}

\begin{proof}[Sketch of Proof]
This follows from the standard theory of linear programs: basic feasible solutions in a polytope occur at extreme points or edges formed by the constraints \(\lambda_i \ge 0\) and \(\sum \lambda_i = 1\).  Since \(\bm{v}^2\) is a scalar multiple of \(\bm{v}^1\) and \(\bm{v}^4\) is a scalar multiple of \(\bm{v}^3\), the feasible set's dimension is effectively smaller than four; one can show that an optimal solution cannot require all four directions simultaneously.  Thus, at least one \(\lambda_i\) must be zero.
\end{proof}

We next specialize to the case where \(\bm{v}^1\) and \(\bm{v}^3\) may also lie in a particular orientation with respect to \(\bm{x}\).  Let
\(
\bm{x} \;\in\; \mathrm{conv}(\bm{v}^1, \bm{v}^2, \bm{v}^3, \bm{v}^4),
\ \text{and assume}\  
\bm{v}^1 \times \bm{v}^3 \;\le\; 0,
\)
which typically means that \(\bm{v}^1\) and \(\bm{v}^3\) span a cone in which \(\bm{x}\) resides, but with a certain orientation (e.g.\ one vector might lie ``above” the other with respect to the origin).  If we suppose that $f^+ > f^-$, then we can decide which three vertices will be used in an optimal solution.

\begin{figure}
\begin{center}
\begin{tikzpicture}[scale=2]
    \draw[->] (-0.2,0) -- (2.2,0) node[right] {$x_1$};
    \draw[->] (0,-0.2) -- (0,2.2) node[above] {$x_2$};
    
    \coordinate (v1) at (1,0.5);
    \coordinate (v2) at (2,1);    %
    \coordinate (v3) at (0.25,1);
    \coordinate (v4) at (0.5,2);
    
    \draw[blue,thick] (0,0) -- (v1);
    \draw[blue,thick] (0,0) -- (v2);
    \draw[blue,thick] (0,0) -- (v3);
    \draw[blue,thick] (0,0) -- (v4);

    \fill[blue] (v1) circle (1.5pt);
    \node[below left] at (v1) {$\bm{v}^1$};
    \fill[blue] (v2) circle (1.5pt);
    \node[below left] at (v2) {$\bm{v}^2$};
    \fill[blue] (v3) circle (1.5pt);
    \node[below left] at (v3) {$\bm{v}^3$};
    \fill[blue] (v4) circle (1.5pt);
    \node[below left] at (v4) {$\bm{v}^4$};

    \draw[dashed,red,thick] (v1) -- (v2) -- (v3) -- cycle;
    \draw[dashed,red,thick] (v2) -- (v3) -- (v4) -- cycle;
    
    \coordinate (x) at (1.2,0.95);
    \fill[green] (x) circle (1.5pt);
    \node[below left] at (x) {$\bm{x}$};
\end{tikzpicture}

\caption{\small Geometry of \(\bm{v}^1,\bm{v}^2,\bm{v}^3,\bm{v}^4\) and a feasible point \(\bm{x}\). 
Here \(\bm{v}^2\) is a scalar multiple of \(\bm{v}^1\), and \(\bm{v}^4\) is a scalar multiple of \(\bm{v}^3\).
At optimality, one \(\lambda_i\) typically vanishes, reducing the problem to a triangle.}
\label{fig:4points-geometry}
\end{center}
\end{figure}

\begin{theorem}
\label{thm:LP-main}
Consider the linear program
\[
\max_{\{\lambda_i\}}\Bigl\{
  f^- (\lambda_1 + \lambda_2) 
  + f^+(\lambda_3 + \lambda_4)
  \;\Bigm|\;
  \lambda_i \ge 0,\;
  \sum_{i=1}^4 \lambda_i = 1,\;
  \bm{x} = \sum_{i=1}^4 \lambda_i \bm{v}^i
\Bigr\}
\]
under the same assumptions that \(\bm{v}^2 = \alpha \bm{v}^1\), \(\bm{v}^4 = \beta \bm{v}^3\) for \(\alpha, \beta > 1\), and \(\bm{v}^1 \times \bm{v}^3 \le 0\). Suppose \(\bm{x} \in \mathrm{conv}(\bm{v}^1, \bm{v}^2, \bm{v}^3, \bm{v}^4)\). Then the optimal value of this linear program
is attained by a solution in which 
\(\bm{x}\) is expressed as a convex combination of exactly three of the four vectors \(\bm{v}^1, \bm{v}^2, \bm{v}^3, \bm{v}^4\) in one of the two configurations:
\begin{equation} \left\{\bm{v}^1, \bm{v}^2, \bm{v}^3\right\} \quad or \quad \left\{\bm{v}^2, \bm{v}^3, \bm{v}^4\right\}.
\end{equation}

\end{theorem}

\begin{proof}
By Proposition~\ref{prop:LP-4points-basic-solution}, we already know an optimal solution has at least one \(\lambda_i=0\).  In this situation, 
\[
\bm{x}
\;=\; 
\lambda_1 \bm{v}^1 
+ \lambda_2 \bigl(\alpha \bm{v}^1\bigr) 
+ \lambda_3 \bm{v}^3 
+ \lambda_4 \bigl(\beta \bm{v}^3\bigr),
\]
with one of these four coefficients zero.  When either \(\lambda_1\) or \(\lambda_2\) is zero, \(\bm{x}\) becomes a combination of a single direction \(\bm{v}^1\)-multiple and the two directions \(\bm{v}^3\)-multiple.  Analogous reasoning applies when \(\lambda_3\) or \(\lambda_4\) is zero.

Geometrically, this means \(\bm{x}\) sits on a ``triangle” formed by two scalar-multiple edges and one ``independent” vector.  In that triangle, the objective 
\(f^- (\lambda_1 + \lambda_2) + f^+ (\lambda_3 + \lambda_4)\) 
can be improved (if needed) by substituting the ``third vector” for the ``fourth” when \(\bm{v}^1\) and \(\bm{v}^3\) are oriented favorably (assuming \(f^+ > f^-\) or vice versa).  

The dominance of these solutions follows from Corollary~\ref{cor:degenerate_v2} and Remark~\ref{remark:scaling}.

\end{proof}

For any ratio $r = x_2/x_1$, we want to know the smallest possible value that could be returned from this linear program.  We consider now fixing the ratio $r$ and solving this min/max problem.

\begin{figure}[h!]
\begin{center}
\begin{tikzpicture}[scale=2]
    \draw[->] (-0.2,0) -- (2.2,0) node[right] {$x_1$};
    \draw[->] (0,-0.2) -- (0,2.2) node[above] {$x_2$};
    
    \coordinate (v1) at (1,0.5);
    \coordinate (v2) at (2,1);    %
    \coordinate (v3) at (0.25,1);
    \coordinate (v4) at (0.5,2);
    
    \draw[blue,thick] (0,0) -- (v1);
    \draw[blue,thick] (0,0) -- (v2);
    \draw[blue,thick] (0,0) -- (v3);
    \draw[blue,thick] (0,0) -- (v4);

    \fill[blue] (v1) circle (1.5pt);
    \node[below left] at (v1) {$\bm{v}^1$};
    \fill[blue] (v2) circle (1.5pt);
    \node[below left] at (v2) {$\bm{v}^2$};
    \fill[blue] (v3) circle (1.5pt);
    \node[below left] at (v3) {$\bm{v}^3$};
    \fill[blue] (v4) circle (1.5pt);
    \node[below left] at (v4) {$\bm{v}^4$};

    \draw[dashed,red,thick] (v1) -- (v2) -- (v3) -- cycle;
    \draw[dashed,red,thick] (v2) -- (v3) -- (v4) -- cycle;
    
    \coordinate (x) at (1.1667,1.1667);
    \coordinate (xscaled) at (1.4,1.4);
    \draw[green,thick] (0.7,0.7) -- (xscaled);
    
    \fill[green] (x) circle (1.5pt);
    \node[below left] at (x) {$x$};
\end{tikzpicture}
\end{center}
\caption{Considering the behavior as we restrict to a single ray through the origin, i.e., we fix the ratio $r = x_2/x_1$.}
\end{figure}

Consider $\bm{d} \in \cone(\bm{v}^1, \bm{v}^3)$.  And let $\mu_\ell\leq \mu_u$ be largest and smallest such that $\mu d \in \conv(v^1, v^2, v^3, v^4)$ for a
all $\mu_\ell\leq \mu \leq \mu_u$

\begin{theorem}[Behavior of the LP Objective Along a Fixed Ray]
\label{thm:ray-boundary-minimum}
Let \(\bm{v}^1, \bm{v}^3 \in \mathbb{R}^2\) be given and assume \(\bm{v}^2 = \alpha\,\bm{v}^1\), \(\bm{v}^4 = \beta\,\bm{v}^3\) with \(\alpha,\beta>1\). 
Fix any nonzero direction \(\bm{d} \in \cone(\bm{v}^1,\,\bm{v}^3)\). 
There exist scalars \(\gamma_\ell \le \gamma_u\) such that 
\[
\{\;\gamma\,\bm{d} : \gamma \ge 0\} 
\;\cap\;
\conv\bigl(\bm{v}^1,\bm{v}^2,\bm{v}^3,\bm{v}^4\bigr)
\;=\;
\{\;\gamma\,\bm{d} : \gamma_\ell \,\le\, \gamma \,\le\, \gamma_u\}.
\]
Then, if we consider the linear program objective $z(\gamma \bm{d})$.
The minimal possible value of \(z(\gamma\,\bm{d})\) for \(\gamma \in [\gamma_\ell,\,\gamma_u]\) is attained at one of the two boundary points \(\gamma_\ell\,\bm{d}\) or \(\gamma_u\,\bm{d}\).

\end{theorem}

\begin{proof}
Fix a direction \(\bm{d} \in \cone(\bm{v}^1, \bm{v}^3)\). Since \(\bm{v}^1\) and \(\bm{v}^3\) are linearly independent, this cone is a pointed sector in \(\mathbb{R}^2\) emanating from the origin, and all points of the form \(\gamma \bm{d}\) for \(\gamma > 0\) lie on the ray in direction \(\bm{d}\).

\textbf{Step 1: Feasibility interval.}
Let \(\bm{x} = \gamma \bm{d}\) for some \(\gamma > 0\). The intersection of the ray \(\{\gamma \bm{d} : \gamma > 0\}\) with the convex quadrilateral 
\[
\mathcal{P} := \conv(\bm{v}^1, \bm{v}^2, \bm{v}^3, \bm{v}^4)
\]
is a closed line segment. Since \(\bm{v}^2 = \alpha \bm{v}^1\) and \(\bm{v}^4 = \beta \bm{v}^3\) for \(\alpha,\beta > 1\), each edge of the convex hull lies in a planar face formed by scalar multiples of two base directions.

Because \(\bm{d} \in \cone(\bm{v}^1, \bm{v}^3)\), and the convex region \(\mathcal{P}\) lies within that cone, the ray \(\gamma \bm{d}\) must enter and exit the quadrilateral at two distinct points. Thus, there exist scalars \(\gamma_\ell, \gamma_u\) with \(\gamma_\ell \le \gamma_u\) such that
\[
\{\gamma \bm{d} : \gamma > 0\} \cap \mathcal{P} = \{\gamma \bm{d} : \gamma_\ell \le \gamma \le \gamma_u\}.
\]

\textbf{Step 2: Triangular domains and barycentric structure.}
From Theorem~\ref{thm:LP-main}, we know that any feasible point \(\bm{x} = \gamma \bm{d}\) in this region must lie in one of the following two active triangular domains:
\[
\conv(\bm{v}^1, \bm{v}^2, \bm{v}^3) \quad \text{or} \quad \conv(\bm{v}^2, \bm{v}^3, \bm{v}^4).
\]
Moreover, in either triangle, \(\bm{x}\) is expressible in a barycentric form that uses the scalar-multiples \(\bm{v}^2 = \alpha \bm{v}^1\) and \(\bm{v}^4 = \beta \bm{v}^3\), as discussed in Corollary~\ref{cor:degenerate_v2}. There, the coordinate \(\mu\) was shown to equal:
\[
\mu = \frac{\bm{v}^1 \times \bm{x}}{\bm{v}^1 \times \bm{u}},
\]
when interpolating between \(\bm{v}^1, \bm{v}^2 = \alpha \bm{v}^1\), and \(\bm{u}\) (which can be taken as \(\bm{v}^3\) here).

This coordinate \(\mu\) determines the weight on \(\bm{v}^3\) in the convex combination, and hence determines the value of the LP objective as:
\[
z(\bm{x}) = f^- (1 - \mu) + f^+ \mu,
\]
which is affine in \(\mu\). Since \(\mu\) itself varies monotonically along the ray \(\gamma \bm{d}\), this makes \(z(\bm{x})\) affine along each segment where \(\bm{x}\) remains in the same triangle.

\textbf{Step 3: Piecewise-affine structure along the ray.}
The ray segment \(\gamma \in [\gamma_\ell, \gamma_u]\) can thus be decomposed into one or two subintervals:
(1) On each subinterval, \(\bm{x} = \gamma \bm{d}\) lies within one of the two triangles above.
(2) On each triangle, the LP objective varies affinely with \(\gamma\), as shown via the barycentric parameter \(\mu\).

\textbf{Step 4: Endpoint minimization.}
A continuous piecewise-affine function on a closed interval attains its minimum at an endpoint unless it is constant. Therefore, the minimal value of \(z(\gamma \bm{d})\) over the ray segment \([\gamma_\ell, \gamma_u]\) must occur at one of the endpoints:
\[
\min_{\gamma \in [\gamma_\ell, \gamma_u]} z(\gamma \bm{d}) = \min\left\{ z(\gamma_\ell \bm{d}),\; z(\gamma_u \bm{d}) \right\}.
\]
This concludes the proof.
\end{proof}

\begin{proof}
Fix a direction \(\bm{d} \in \cone(\bm{v}^1, \bm{v}^3)\), and consider the ray \(\gamma \mapsto \gamma \bm{d}\) for \(\gamma > 0\). As in the previous theorems, define
\[
\mathcal{P} := \conv(\bm{v}^1, \bm{v}^2, \bm{v}^3, \bm{v}^4),
\]
with \(\bm{v}^2 = \alpha \bm{v}^1\) and \(\bm{v}^4 = \beta \bm{v}^3\) for \(\alpha,\beta > 1\). Since all four vectors lie along two rays, the convex region \(\mathcal{P}\) is quadrilateral-shaped and fully contained in the cone generated by \(\bm{v}^1, \bm{v}^3\).

\textbf{Step 1: Feasible interval.}  
Because \(\bm{d} \in \cone(\bm{v}^1, \bm{v}^3)\), there exists a unique interval \([\gamma_\ell, \gamma_u]\) such that
\[
\{\gamma \bm{d} : \gamma > 0\} \cap \mathcal{P} = \{\gamma \bm{d} : \gamma_\ell \le \gamma \le \gamma_u\}.
\]

\textbf{Step 2: Triangular decomposition.}  
From Theorem~\ref{thm:LP-main}, we know that as \(\gamma\) varies over \([\gamma_\ell, \gamma_u]\), the point \(\bm{x} = \gamma \bm{d}\) lies in exactly one of the following two triangles:
\[
T_1 := \conv(\bm{v}^1, \bm{v}^2, \bm{v}^3), \quad
T_2 := \conv(\bm{v}^2, \bm{v}^3, \bm{v}^4),
\]
with a possible transition at some value \(\gamma_m\in (\gamma_\ell, \gamma_u)\) where the active triangle changes.

\textbf{Step 3: Monotonicity of the LP objective.}  
In both triangles, the LP objective takes the form:
\[
z(\bm{x}) = f^- (\lambda_1 + \lambda_2) + f^+ (\lambda_3 + \lambda_4),
\]
which simplifies, due to the structure of the convex combination, to:
\[
z(\bm{x}) = (1 - \mu) f^- + \mu f^+,
\]
where \(\mu\) is the barycentric coordinate of \(\bm{x}\) associated with \(\bm{v}^3\) (in \(T_1\)) or with \(\bm{v}^4\) (in \(T_2\)).

From Corollary~\ref{cor:degenerate_v2}, we know that:
- \(\mu = \mu(\gamma)\) varies linearly with \(\gamma\), since \(\mu\) is given by the expression
\[
\mu = \frac{\bm{v}^1 \times \bm{x}}{\bm{v}^1 \times \bm{v}^3}
= \frac{\bm{v}^1 \times (\gamma \bm{d})}{\bm{v}^1 \times \bm{v}^3}
= \gamma \cdot \frac{\bm{v}^1 \times \bm{d}}{\bm{v}^1 \times \bm{v}^3},
\]
which is a strictly increasing function of \(\gamma\) so long as the denominator is nonzero.

- The LP objective \(z(\gamma \bm{d})\) is affine in \(\mu\), and thus also strictly increasing in \(\gamma\) within \(T_1\), and decreasing in \(\gamma\) within \(T_2\), provided \(f^+ > f^-\).

Therefore,  on \([\gamma_\ell, \gamma_m]\), where \(\bm{x} = \gamma \bm{d} \in T_1\), we have \(z(\gamma \bm{d})\) increasing in \(\gamma\), and on \([\gamma_m, \gamma_u]\), where \(\bm{x} = \gamma \bm{d} \in T_2\), we have \(z(\gamma \bm{d})\) decreasing in \(\gamma\)

\textbf{Step 4: Minimum at boundary.}  
Thus, the function \(z(\gamma \bm{d})\) increases up to \(\gamma_m\) and decreases after. Its minimum must occur at one of the endpoints of the interval:
\[
\min_{\gamma \in [\gamma_\ell, \gamma_u]} z(\gamma \bm{d}) = \min\left\{ z(\gamma_\ell \bm{d}),\; z(\gamma_u \bm{d}) \right\}.
\]
This completes the proof.
\end{proof}

\begin{figure}[h!]
\centering
\begin{tikzpicture}
\begin{axis}[
    width=10cm, height=6cm,
    xlabel={$\gamma$}, ylabel={Objective ($\lambda_3 + \lambda_4$)},
        xtick={0.5, 1.8, 3.5},
    xticklabels={$\gamma_\ell$, $\gamma_m$, $\gamma_u$},
    title={Objective vs $\gamma$ for a fixed $r$},
    grid=major, thick
]
\addplot[purple, thick] coordinates {
    (0.5, 0.27) %
    (1.8, 0.595)
    (3.5, 0.27)
};
\end{axis}
\end{tikzpicture}

\caption{Concave piecewise linear profile of the objective with a fixed $r$.}
\end{figure}

\subsection{Minimum Values Along Rays}

Suppose now that \(\bm{x}\) lies along a line segment between two points \(\bm{u}, \bm{v} \in \mathbb{R}^2_{++}\). We are interested in the behavior of the interpolated objective value along this segment, particularly as expressed in terms of the slope \(r = x_2 / x_1\) of the point \(\bm{x}\).

\begin{center}
\begin{tikzpicture}[scale=2]
    \draw[->] (-0.2,0) -- (2.2,0) node[right] {$x_1$};
    \draw[->] (0,-0.2) -- (0,2.2) node[above] {$x_2$};
    
    \coordinate (v1) at (1,0.5);
    \coordinate (v2) at (2,1);    
    \coordinate (v3) at (0.25,1);
    \coordinate (v4) at (0.5,2);
    
    \draw[blue,thick] (0,0) -- (v1);
    \draw[blue,thick] (0,0) -- (v2);
    \draw[blue,thick] (0,0) -- (v3);
    \draw[blue,thick] (0,0) -- (v4);
    
    \fill[blue] (v1) circle (1.5pt);
    \node[below left] at (v1) {$\bm{v}^1$};
    \fill[blue] (v2) circle (1.5pt);
    \node[below left] at (v2) {$\bm{v}^2$};
    \fill[blue] (v3) circle (1.5pt);
    \node[below left] at (v3) {$\bm{v}^3$};
    \fill[blue] (v4) circle (1.5pt);
    \node[below left] at (v4) {$\bm{v}^4$};
    
    \draw[green,thick] (v1) -- (v3);
    \draw[green,thick] (v2) -- (v4);
    
    \coordinate (xsmall) at (0.7,0.7);
    \coordinate (xscaled) at (1.4,1.4);
    \fill[green] (xsmall) circle (1.5pt);
    \node[below left] at (xsmall) {$\bm{x}$};
    \fill[green] (xscaled) circle (1.5pt);
    \node[below left] at (xscaled) {$\bm{x}$};
\end{tikzpicture}
\end{center}

Let \(\bm{u}, \bm{v} \in \mathbb{R}^2_{++}\), and define the point \(\bm{x}\) on the line segment between them as
\[
\bm{x} = (1 - \lambda)\bm{u} + \lambda \bm{v}, \quad \lambda \in [0, 1].
\]
We associate to \(\bm{x}\) a linearly interpolated value
\(
f_{\bm{x}} = (1 - \lambda) f_u + \lambda f_v.
\)
We now define
\(
r = \frac{x_2}{x_1}, \quad \phi(r) := f_{\bm{x}},
\)
and aim to express \(\phi(r)\) directly in terms of \(r\).

\textbf{Expression for \(r(\lambda)\) and Its Inverse.}
Let \(\bm{u} = (u_1, u_2)\) and \(\bm{v} = (v_1, v_2)\). Then:
\[
x_1 = u_1 + \lambda (v_1 - u_1), \quad
x_2 = u_2 + \lambda (v_2 - u_2),
\]
so that:
\[
r(\lambda) = \frac{x_2}{x_1} = \frac{u_2 + \lambda (v_2 - u_2)}{u_1 + \lambda (v_1 - u_1)}.
\]

To express \(\lambda\) as a function of \(r\), cross-multiply:
\[
r(u_1 + \lambda(v_1 - u_1)) = u_2 + \lambda(v_2 - u_2),
\]
which simplifies to:
\[
\lambda \big(r(v_1 - u_1) - (v_2 - u_2)\big) = u_2 - r u_1.
\]
Solving gives:
\[
\lambda(r) = \frac{u_2 - r u_1}{r(v_1 - u_1) - (v_2 - u_2)}.
\]

\subsubsection*{Final Expression for \(\phi(r)\)}

Substituting into the interpolation formula:
\[
f_{\bm{x}} = f_u + (f_v - f_u) \cdot \lambda(r),
\]
we obtain:
\[
\boxed{
\phi(r) = f_u + (f_v - f_u)\,\frac{u_2 - r u_1}{r(v_1 - u_1) - (v_2 - u_2)}.
}
\]

This expression defines \(\phi(r)\) explicitly as a rational function in terms of the slope \(r\), and will form the basis of our concavity analysis.

\subsection{Concavity Condition}

\begin{theorem}
Let \(\bm{u} = (u_1, u_2), \; \bm{v} = (v_1, v_2) \in \mathbb{R}^2\), and let \(f_u, f_v \in \mathbb{R}\) with $f_u > f_v$. Define the function
\[
\tilde \phi(r) = f_u + (f_v - f_u)\,\frac{u_2 - r u_1}{r(v_1 - u_1) - (v_2 - u_2)},
\]
and restrict the domain to
\(
r \in \left[\frac{v_2}{v_1}, \;\frac{u_2}{u_1}\right],
 \text{ with } u_1, v_1 > 0.
\)
Then \(\tilde \phi(r)\) is concave on this interval if and only if the cross product
\(
\bm{u} \times \bm{v}  \le 0.
\)
\end{theorem}

\begin{proof}
Write the function as
\[
\phi(r) = f_u + (f_v - f_u)\,B(r),
\quad \text{where} \quad
B(r) = \frac{u_2 - r u_1}{r(v_1 - u_1) - (v_2 - u_2)}.
\]
We compute the second derivative \(B''(r)\) :
\[
B''(r) = \frac{-2(u_1 - v_1)\left[u_1(r(u_1 - v_1) - u_2 + v_2) - (u_1 - v_1)(r u_1 - u_2)\right]}{\left(r(u_1 - v_1) - u_2 + v_2\right)^3}.
\]

Let us simplify the numerator:

\[
\begin{aligned}
& u_1(r(u_1 - v_1) - u_2 + v_2) - (u_1 - v_1)(r u_1 - u_2) \\
&= r u_1 (u_1 - v_1) - u_1 u_2 + u_1 v_2 - r u_1(u_1 - v_1) + (u_1 - v_1) u_2 \\
&= -u_1 u_2 + u_1 v_2 + (u_1 - v_1) u_2 \\
&= u_1 v_2 - u_2 v_1.
\end{aligned}
\]

Thus the second derivative becomes:
\[
B''(r) = \frac{-2(u_1 - v_1)(u_1 v_2 - u_2 v_1)}{(r(u_1 - v_1) - u_2 + v_2)^3}.
\]

\textbf{Sign analysis:}
 The sign of the denominator is determined by the cube of the linear expression \(r(u_1 - v_1) - u_2 + v_2\). Since the cube preserves sign and the expression is linear in \(r\), the denominator has constant sign on the interval \(\left[\frac{v_2}{v_1}, \frac{u_2}{u_1}\right]\), assuming \(u_1, v_1 > 0\).
  
 Therefore, the sign of \(B''(r)\) is governed by the sign of the numerator:
  \[
  -2(u_1 - v_1)(u_1 v_2 - u_2 v_1).
  \]
  
  \item So \(B''(r) \le 0\) if and only if:
  \[
  (u_1 - v_1)(u_1 v_2 - u_2 v_1) \ge 0.
  \]

 However, on the interval \(\left[\frac{v_2}{v_1}, \frac{u_2}{u_1}\right]\), it can be shown that the sign of the denominator ensures the correct curvature without requiring any assumption on \(u_1 < v_1\) or \(v_1 < u_1\), as long as the cross product satisfies:
  \[
  \bm{u} \times \bm{v} = u_1 v_2 - u_2 v_1 \le 0.
  \]

Thus, \(B''(r) \le 0\) on the entire interval if and only if \(\bm{u} \times \bm{v} \le 0\), completing the proof.
\end{proof}

\begin{corollary}
Let \(\tilde \phi(r)\) be defined as in the previous theorem
and let \(\tilde \phi_{\mathrm{PWL}}(r)\) denote the piecewise linear interpolation between the values \(\tilde \phi\left(\frac{v_2}{v_1}\right) = f_v\) and \(\tilde \phi\left(\frac{u_2}{u_1}\right) = f_u\), that is:
\[
\phi_{\mathrm{PWL}}(r) = (1 - \theta) f_v + \theta f_u,
\quad \text{where} \quad
\theta = \frac{r - \frac{v_2}{v_1}}{\frac{u_2}{u_1} - \frac{v_2}{v_1}}.
\]

If the cross product \(\bm{u} \times \bm{v} \le 0\), then \(\tilde \phi(r)\) is concave on \(\left[\frac{v_2}{v_1}, \frac{u_2}{u_1}\right]\), and satisfies:
\[
\tilde \phi(r) \ge \phi_{\mathrm{PWL}}(r)
\quad \text{for all } r \in \left[\frac{v_2}{v_1}, \frac{u_2}{u_1}\right].
\]
\end{corollary}

\begin{proof}
From the previous theorem, if \(\bm{u} \times \bm{v} \le 0\), then \(\tilde \phi(r)\) is concave on the interval \(\left[\frac{v_2}{v_1}, \frac{u_2}{u_1}\right]\). Since \(\phi\) is also continuous and satisfies
\(
\tilde \phi\left(\frac{v_2}{v_1}\right) = f_v, \ \ 
\tilde \phi\left(\frac{u_2}{u_1}\right) = f_u.
\)
The piecewise linear interpolant \(\phi_{\mathrm{PWL}}(r)\) is the chord connecting the two endpoints of the concave function.

By the elementary property of concave functions lying above their chords, we have:
\[
\tilde \phi(r) \ge \phi_{\mathrm{PWL}}(r) \quad \text{for all } r \in \left[\frac{v_2}{v_1}, \frac{u_2}{u_1}\right]. \qedhere
\]
\end{proof}

\begin{proof}[Proof of Theorem~\ref{tech:LogE:Conjecture}]
    Since we choose $\theta \in [0,\pi/4]$, it holds that $u_1 < v_1$ in each of our slices, and thus the rest of the assumptions hold. Therefore we apply the prior corollary to obtain that LogE is above PWL.
\end{proof}

\end{document}